\documentclass[11pt]{article}
\usepackage{exscale,relsize}
\usepackage{amsmath}
\usepackage{amsfonts}
\usepackage[hidelinks]{hyperref}
\usepackage{amssymb}
\usepackage{calc}
\usepackage{theorem}
\usepackage{pifont}      
\usepackage{array}
\usepackage{color}

\oddsidemargin -0.1cm
\textwidth  16.5cm
\topmargin  0.0cm
\headheight 0.0cm
\textheight 21.0cm
\parindent  4mm
\parskip    10pt
\tolerance  3000
\newcommand{\To}{\ensuremath{\rightrightarrows}}

\newcommand{\scal}[2]{\langle{{#1},{#2}}\rangle}
\newcommand{\Scal}[2]{\left\langle{{#1},{#2}}\right\rangle}

\newcommand{\HH}{\ensuremath{\mathcal H}}

\newcommand{\st}{\ensuremath{\;|\;}}

\newcommand{\RR}{\ensuremath{\mathbb R}}

\newcommand{\RX}{\ensuremath{\,\left]-\infty,+\infty\right]}}
\newcommand{\RXX}{\ensuremath{\,\left[-\infty,+\infty\right]}}

\newcommand{\NN}{\ensuremath{\mathbb N}}

\newcommand{\dom}{\ensuremath{\operatorname{dom}}}

\newcommand{\gra}{\ensuremath{\operatorname{gra}}}

\newcommand{\inte}{\ensuremath{\operatorname{int}}}

\newcommand{\reli}{\ensuremath{\operatorname{ri}}}

\newcommand{\average}{\ensuremath{\mathcal{R}_{\mu}({\bf A},{\boldsymbol \lambda})}}

\newcommand{\averageonelambda}{\ensuremath{\mathcal{R}({\bf A},{\boldsymbol \lambda})}}

\newcommand{\averageinverse}{\ensuremath{\mathcal{R}_{\mu^{-1}}({\bf A}^{-1},{\boldsymbol \lambda})}}
\newcommand{\averageoneinverse}{\ensuremath{\mathcal{R}({\bf A}^{-1},{\boldsymbol \lambda})}}

\newcommand{\ran}{\ensuremath{\operatorname{ran}}}

\newcommand{\rank}{\ensuremath{\operatorname{rank}}}

\newcommand{\Fix}{\ensuremath{\operatorname{Fix}}}

\newcommand{\Id}{\ensuremath{\operatorname{Id}}}

\newcommand{\gr}{\ensuremath{\operatorname{gra}}}

\newcommand{\minf}{\ensuremath{-\infty}}

\renewcommand{\phi}{\ensuremath{\varphi}}

\newtheorem{theorem}{Theorem}[section]

\newtheorem{fact}[theorem]{Fact}
\newtheorem{corollary}[theorem]{Corollary}
\newtheorem{proposition}[theorem]{Proposition}
\newtheorem{definition}[theorem]{Definition}

\theoremstyle{plain}{\theorembodyfont{\rmfamily}
}
\theoremstyle{plain}{\theorembodyfont{\rmfamily}
}
\theoremstyle{plain}{\theorembodyfont{\rmfamily}
}
\theoremstyle{plain}{\theorembodyfont{\rmfamily}
\newtheorem{example}[theorem]{Example}}
\theoremstyle{plain}{\theorembodyfont{\rmfamily}
\newtheorem{remark}[theorem]{Remark}}
\theoremstyle{plain}{\theorembodyfont{\rmfamily}
}



\newcommand{\pluss}{{\hskip1pt \raise1pt\vbox{\hrule width6pt \vskip1pt
\hrule width6pt}\kern-4pt{\lower1pt\hbox{\vrule height6pt \kern1pt\vrule
height6pt}}\hskip5pt}}

\begin{document}

\title{{\fontfamily{ptm}\selectfont The resolvent average of monotone operators:\\ dominant and recessive properties}}

\author{
Sedi Bartz\thanks{Mathematics, Irving K.\ Barber School,
University of British Columbia Okanagan, Kelowna, British Columbia
V1V 1V7, Canada. E-mail: \texttt{sedi.bartz@ubc.ca}.},
Heinz H.\ Bauschke\thanks{Mathematics, Irving K.\ Barber School,
University of British Columbia Okanagan, Kelowna, British Columbia
V1V 1V7, Canada. E-mail: \texttt{heinz.bauschke@ubc.ca}.},
Sarah M. Moffat\thanks{Mathematics, Irving K.\ Barber School, University
of British Columbia Okanagan, Kelowna, British Columbia V1V 1V7,
Canada. E-mail: \texttt{sarah.moffat@ubc.ca}.},
and
Xianfu Wang\thanks{Mathematics, Irving K.\ Barber School, University
of British Columbia Okanagan, Kelowna, British Columbia V1V 1V7,
Canada. E-mail: \texttt{shawn.wang@ubc.ca}.}}

\date{May 11, 2015}

\maketitle


\begin{abstract} \noindent
Within convex analysis, a rich theory  with various applications has been evolving since the proximal average of convex functions was first introduced over a decade ago. When one considers the subdifferential of the proximal average, a natural averaging operation of the subdifferentials of the averaged functions emerges. In the present paper we extend the reach of this averaging operation to the framework of monotone operator theory in Hilbert spaces, transforming it into the resolvent average. The theory of resolvent averages contains many desirable properties. In particular, we study a detailed list of properties of monotone operators and classify them as dominant or recessive with respect to the resolvent average. As a consequence, we recover a significant part of the theory of proximal averages. Furthermore, we shed new light on the proximal average and present novel results and desirable properties the proximal average possesses which have not been previously available.       
\end{abstract}

\noindent {\bfseries 2010 Mathematics Subject Classification:}
Primary 47H05, 52A41, 90C25; Secondary 15A09, 26A51, 26B25, 26E60, 47H09, 47A63.

\noindent {\bfseries Keywords:} 
Convex function, Fenchel conjugate, Legendre function, monotone operator, paramonotone, positive semidefinite operator, proximal average, rectangular, resolvent, resolvent average, strong convexity, strong monotonicity, strong smoothness, subdifferential operator, uniform convexity, uniform smoothness.


\section{Introduction}\label{intro}
The proximal average of two convex functions was first considered in \cite{BMR}. Since then, in a series of papers, the definition of the proximal average was refined and its useful properties were studied and employed in various applications revealing a rich theory with promising potential for further evolution and applications. One of the latest forms of the proximal average we refer to in the present paper is given in Definition~\ref{proximal average def} below. Some other cornerstones in the study of the proximal average include: \cite{BGLW} where many useful properties and examples where presented, \cite{BLT} where it was demonstrated that the proximal average defines a homotopy on the class of convex functions (unlike other, classical averages), and, also, a significant application \cite{BW} where the proximal average was employed in order to explicitly construct \emph{autoconjugate} representations of monotone operators, also known as \emph{self-dual} Lagrangians, the importance of which in variational analysis is demonstrated in detail in the monograph~\cite{Gho}. A recent application of the proximal average in the theory of \emph{machine learning} is~\cite{Yu}. When subdifferentiating the proximal average, we obtain an averaging operation of the subdifferentials of the underlying functions  (see equation~\eqref{resolvent average of sub} below). 

Monotone operators are fundamentally important in analysis and
optimization
\cite{AusTeb}, 
\cite{BC2011}, 
\cite{Borwein}, 
\cite{BV},
\cite{BurIus}, 
\cite{RockWets}, 
\cite{Simons2}. 
In the present paper, we analyze the resolvent average (see Definition~\ref{resolvent average def} below), which significantly extends the above averaging operation of subdifferentials to the general framework of monotone operator theory. 
(See also \cite{bmow2013}, \cite{Wang}, and \cite{Moffat} for some
earlier works on the resolvent average.)
We present powerful general properties the resolvent average
possesses and then focus on the study of more specific
inheritance properties of the resolvent average. Namely, we go through a detailed list of attractive properties of monotone operators and classify them as \emph{dominant} or \emph{recessive} with respect to the resolvent average by employing the following notions:
\begin{definition}[inheritance of properties]
Let $C$ be a set and let $I$ be an index set. Suppose that $\mathcal{AVE}:C^I\to C$. Then a property $(p)$ is said to be
\begin{enumerate}
\item \textbf{dominant} with respect to $\mathcal{AVE}$ if for each $(c_i)\in C^I$, the existence of $i_0\in I$ such that $c_{i_0}$ has property $(p)$ implies that $\mathcal{AVE}((c_i))$ has property $(p)$;
\item \textbf{recessive} with respect to $\mathcal{AVE}$ if $(p)$ is not dominant and for each $(c_i)\in C^I$, for each $i\in I$, $c_i$ having property $(p)$ implies that $\mathcal{AVE}((c_i))$ has property $(p)$.
\end{enumerate}
\end{definition}
We also provide several examples of the resolvent average (mainly in order to prove the recessive nature of several properties) of mappings which are monotone, however, which are not subdifferential operators. As a consequence, the resolvent average is now seen to be a natural and effective tool for averaging monotone operators which avoids many of the domain and range obstacles standing in front of classical averages such as the arithmetic average. The resolvent average is also seen to be an effective averaging technique when one wishes the average to posses specific properties, especially when the desired properties are dominant. When we restrict our attention to monotone linear relations, our current study extends the one in~\cite{bmw-res} where the resolvent average was considered as an average of positive semidefinite and definite matrices. When we restrict our attention to subdifferential operators, we recover a large part of the theory of the proximal average~\cite{BGLW}. Moreover, we present several novel results regarding the inheritance of desired properties of the proximal average which have not been previously available. 
In summary, \emph{the resolvent average provides a novel technique for generating new maximally monotone operators with desirable properties}.

The remaining of the paper is organized as follows:
In the remainder of Section~\ref{intro} we present the basic definitions, notations and the relations between them which we will employ throughout the paper. We also collect all preliminary facts necessary for our presentation. In Section~\ref{basic} we present basic properties of the resolvent average. In Section~\ref{dominant} we study dominant properties while Section~\ref{recessive} deals with recessive properties. Finally, in Section~\ref{neither} we consider combinations of properties, properties which are neither dominant nor recessive, and other observations and remarks.

Before we start our analysis, let us recall the following concepts and standard notation from monotone operator theory and convex analysis:
Throughout this paper, $\HH$ is a real Hilbert space with inner product $\scal{\cdot}{\cdot}$, induced norm $\|\cdot \|$, identity mapping $\Id$ and we set $q=\frac{1}{2}\|\cdot\|^2$. We denote the interior of a subset $C$ of $\HH$ by $\inte C$. Let $A:\HH \To \HH$ be a set-valued mapping. We say that $A$ is  \emph{proper} when the \emph{domain} of $A$, the set $\dom A=\{x\in\HH \st Ax\neq\varnothing\}$, is nonempty. The \emph{range} of $A$ is the set $\ran A = A(\HH)=\bigcup_{x \in \HH} Ax$, the \emph{graph} of $A$ is the set $\gra A = \{(x,u)\in \HH \times \HH \st u \in Ax\}$ and the inverse of $A$ is the mapping $A^{-1}$ satisfying $x\in A^{-1}u\Leftrightarrow u\in Ax$. $A$ is said to be \emph{monotone} if
$$
(\forall (x,u) \in \gra A)(\forall (y,v) \in \gra A)\quad \scal{x-y}{u-v} \geq 0.
$$
$A$ is said to be \emph{maximally monotone} if there exists no monotone operator $B$ such that $\gra A$ is a proper subset of $\gra B$. The \emph{resolvent} of $A$ is the mapping $J_A=(A+\Id)^{-1}$.
We say that $A$ is a linear relation if $\gra A$ is a linear subspace of $\HH \times \HH$. $A$ is said to be a maximally monotone linear relation if $A$ is both maximally monotone and a linear relation. 
The mapping $T: \HH \to \HH$ is said to be \emph{firmly nonexpansive} if
$$
(\forall x\in \HH)(\forall y\in \HH) \quad \|Tx-Ty\|^2 + \|(\Id-T)x-(\Id-T)y\|^2 \leq \|x-y\|^2.
$$
Obviously, if $T$ is firmly nonexpansive,
then it is \emph{nonexpansive}, that is, {Lipschitz continuous} with constant $1$, where a Lipschitz continuous mapping with constant $L$ is a mapping $T:\HH\to\HH$ such that
$$
(\forall x\in \HH)(\forall y\in \HH) \quad \|Tx-Ty\|\leq L\|x-y\|.
$$
The mapping $T$ is said to be a \emph{Banach contraction} if it is Lipschitz continuous with constant $L<1$. The point $x\in\HH$ is said to be a fixed point of the mapping $T:\HH\to\HH$ if $Tx=x$. The set of all fixed points of $T$ is denoted by $\Fix T$.  
The function $f:\HH\to\RX$ is said to be \emph{proper} if $\dom f=\{x\in\HH \st f(x)<\infty\}\neq\varnothing$. The \emph{Fenchel conjugate} of the function $f$ is the function $f^*$ which is defined by $f^*(u)=\sup_{x\in\HH}(\scal{u}{x}-f(x))$. The subdifferential of a proper function $f$ is the mapping $\partial f:\HH\rightrightarrows\HH$ which is defined by
$$
\partial f(x)=\big\{u\in \HH\ \big|\  f(x)+\scal{u}{y-x}\leq f(y),\ \ \forall y\in\HH\big\}.
$$ 
The \emph{indicator function} of a subset $C$ of $\HH$ is the function $\iota_C:\HH\to\RX$ which vanishes on $C$ and equals $\infty$ on $\HH\smallsetminus C$. The \emph{normal cone} operator of the set $C$ is the mapping $N_C=\partial\iota_C$. We will denote the \emph{nearest point projection} on the set $C$ by $P_C$.

We now recall the definition of the proximal average and present the definition of the resolvent average. To this end we will make use of the following additional notations: Throughout the paper we assume that $\mu\in\ ]0,\infty[$, $n\in\{1,2,\dots\}$ and $I=\{1,\ldots,n\}$. 
For every $i \in I$, let $A_i:\HH \To \HH$ be a mapping, let $f_i:\HH\to\RX$ be a function and let $\lambda_{i}>0,\ \sum_{i\in I}\lambda_i=1$ . We set: 
\begin{align*}
&{\bf A}=(A_{1},\ldots, A_{n}),\ {\bf A^{-1}}=(A^{-1}_{1},\ldots, A^{-1}_{n}),\ {\bf f}=(f_{1},\ldots, f_{n}),\ {\bf f^*}=(f^*_{1},\ldots, f^*_{n}), \\&\text{and}\ {\boldsymbol \lambda}=(\lambda_{1},\ldots, \lambda_{n}).
\end{align*}

\begin{definition}[proximal average]\label{proximal average def}
The \emph{${\boldsymbol \lambda}$-weighted proximal average} of ${\bf f}$ with parameter $\mu$ is the function $p_\mu({\bf f},{\boldsymbol \lambda}):\HH\to\RXX$ defined by
\begin{equation}\label{proximal average}
p_\mu({\bf f},{\boldsymbol \lambda})(x)=\frac{1}{\mu}\Big(-\frac{1}{2}\|x\|^2+\inf_{\sum_{i\in I}x_i=x}\sum_{i\in I}\lambda_i\big(\mu f_i(x_i/ \lambda_i)+\frac{1}{2}\|x_i/ \lambda_i\|^2\big)\Big),\ \ \ \ \ \ x\in\HH.
\end{equation}
We will simply write $p({\bf f},{\boldsymbol \lambda})$ when $\mu=1$, $p_\mu({\bf f})$ when all of the $\lambda_i$'s coincide and, finally, $p({\bf f})$ when $\mu=1$ and all of the $\lambda_i$'s coincide.   
\end{definition}

\begin{fact}\label{resolvents of subs and proximal reformulation}\emph{\cite[Proposition 4.3 and Theorem 6.7]{BGLW}}
Suppose that for each $i\in I$, $f_i:\HH\to\RX$ is proper, lower semicontinuous and convex. Then $p_\mu(\bf f,\boldsymbol{\lambda})$ is proper, lower semicontinuous, convex and for every $x\in\HH$,
\begin{equation}\label{proximal average reformulation}
p_\mu({\bf f},\boldsymbol{\lambda})(x)=\inf_{\sum_{i\in I}\lambda_j y_j=x}\sum_{i\in I}\lambda_if_i(y_i)+\frac{1}{\mu}\bigg(\Big(\sum_{i\in I}\lambda_iq(y_i)\Big)-q(x)\bigg),\ \ \ \ \ \ x\in\HH.
\end{equation}
Furthermore,
\begin{equation}\label{average of resolvents of sub}
J_{\mu\partial p_{\mu}({\bf f},{\boldsymbol \lambda})}=\sum_{i\in I}\lambda_iJ_{\mu\partial f_i}.
\end{equation}
\end{fact}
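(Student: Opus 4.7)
My approach has three stages.

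First, I would derive the reformulation~\eqref{proximal average reformulation} by the change of variables $y_i = x_i/\lambda_i$ in Definition~\ref{proximal average def}: the constraint $\sum_{i\in I}x_i=x$ becomes $\sum_{i\in I}\lambda_i y_i=x$, the $i$-th summand becomes $\lambda_i(\mu f_i(y_i)+q(y_i))$, and distributing the factor $1/\mu$ isolates the additive term $-q(x)/\mu$ outside the infimum.

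Second, for properness, convexity, and lower semicontinuity, the key device is to introduce the auxiliary functions $F_i:=\mu f_i+q$, each of which is proper, lsc, and $1$-strongly convex. Rewriting~\eqref{proximal average reformulation} as
\begin{equation*}
\mu\, p_\mu(\mathbf{f},\boldsymbol{\lambda})(x)+q(x)=G(x):=\inf_{\sum_{i\in I}\lambda_i y_i=x}\sum_{i\in I}\lambda_i F_i(y_i),
\end{equation*}
I would verify by a direct interchange of suprema (using $\scal{x}{u}=\sum_i\lambda_i\scal{y_i}{u}$ under the linear constraint) that $G^{*}=\sum_{i\in I}\lambda_i F_i^{*}$. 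Strong convexity of each $F_i$ renders this $\boldsymbol{\lambda}$-weighted infimal convolution exact, proper, convex, and lower semicontinuous, so by Fenchel--Moreau $G=\big(\sum_{i\in I}\lambda_i F_i^{*}\big)^{*}$. Moreover, $1$-strong convexity of $F_i$ forces $F_i^{*}$ to have $1$-Lipschitz gradient (the duality between strong convexity and Lipschitz continuity of the gradient), and since $\sum_{i\in I}\lambda_i=1$, the sum $\sum_{i\in I}\lambda_i F_i^{*}$ also has $1$-Lipschitz gradient. The dual direction of the same duality then gives that its conjugate $G$ is $1$-strongly convex, i.e.\ $G-q$ is convex. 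Consequently $\mu\,p_\mu(\mathbf{f},\boldsymbol{\lambda})=G-q$ is proper, lsc, and convex.

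Third, for~\eqref{average of resolvents of sub}, differentiability of $q$ gives the sum rule $\partial(\mu\,p_\mu+q)=\mu\,\partial p_\mu+\Id=\partial G$, so
\begin{equation*}
J_{\mu\partial p_\mu(\mathbf{f},\boldsymbol{\lambda})}=(\mu\,\partial p_\mu+\Id)^{-1}=(\partial G)^{-1}=\partial G^{*}=\partial\Big(\sum_{i\in I}\lambda_i F_i^{*}\Big)=\sum_{i\in I}\lambda_i\nabla F_i^{*},
\end{equation*}
with the last sum rule justified by smoothness of each $F_i^{*}$. Since $\partial F_i=\mu\partial f_i+\Id$ and $\partial F_i^{*}=(\partial F_i)^{-1}$, one has $\nabla F_i^{*}=J_{\mu\partial f_i}$, which closes the argument. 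The main obstacle is establishing that $G$ is proper, lsc, and convex so that the Fenchel--Moreau biconjugate theorem applies and so that $(\partial G)^{-1}=\partial G^{*}$; both rest on the strong convexity of each $F_i$, which is precisely why the device of writing $F_i=\mu f_i+q$ is natural --- the extra quadratic upgrades the bare convexity of $f_i$ into the strong convexity needed at every step of the argument.
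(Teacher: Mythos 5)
The paper offers no internal proof to compare against: this statement is imported verbatim as a Fact from \cite[Proposition~4.3 and Theorem~6.7]{BGLW}, so your argument must be judged on its own merits, and on that basis it is correct. The substitution $y_i=x_i/\lambda_i$ in Definition~\ref{proximal average def} does yield \eqref{proximal average reformulation}; the conjugate identity $G^*=\sum_{i\in I}\lambda_i F_i^*$ follows, as you say, by collapsing the double supremum under the linear constraint; the strong-convexity/Lipschitz-gradient duality you invoke twice is precisely Fact~\ref{Zal strong convexity iff} of the paper (with $\epsilon=1$); and the chain $J_{\mu\partial p_\mu({\bf f},{\boldsymbol\lambda})}=(\partial G)^{-1}=\partial G^*=\sum_{i\in I}\lambda_i\nabla F_i^*=\sum_{i\in I}\lambda_i J_{\mu\partial f_i}$ is legitimate because $G$ is proper, lower semicontinuous and convex, each $F_i^*$ is everywhere finite and Fr\'{e}chet differentiable, and $q$ is everywhere continuous, so the two sum rules you use, $\partial(\mu p_\mu({\bf f},{\boldsymbol\lambda})+q)=\mu\partial p_\mu({\bf f},{\boldsymbol\lambda})+\Id$ and $\partial F_i=\mu\partial f_i+\Id$, both hold.

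The one step you assert rather than prove is the load-bearing one: that $1$-strong convexity of the $F_i$ makes $G(x)=\inf\big\{\sum_{i\in I}\lambda_iF_i(y_i)\;:\;\sum_{i\in I}\lambda_iy_i=x\big\}$ proper, lower semicontinuous and exact. This is exactly what licenses Fenchel--Moreau, i.e.\ $G=G^{**}=\big(\sum_{i\in I}\lambda_iF_i^*\big)^*$; without it the second stage would be circular, since the duality argument by itself only controls $G^{**}$, not $G$. The claim is true and standard, but a complete write-up should supply either a precise citation or the following short argument. Properness: $G^*=\sum_{i\in I}\lambda_iF_i^*$ is finite everywhere, hence proper, so $G\geq G^{**}>-\infty$ everywhere, while $G\not\equiv+\infty$ is clear. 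Exactness: on the nonempty closed affine set $\big\{(y_1,\dots,y_n)\in\HH^n:\sum_{i\in I}\lambda_iy_i=x\big\}$ the objective $(y_1,\dots,y_n)\mapsto\sum_{i\in I}\lambda_iF_i(y_i)$ is lower semicontinuous and strongly convex with modulus at least $\min_{i\in I}\lambda_i$, hence attains its infimum whenever that infimum is finite. Lower semicontinuity: if $x_k\to x$ with $G(x_k)$ bounded above, the strong convexity lower bounds $F_i(y)\geq q(y+u_i)+c_i$ force the corresponding minimizers $(y_i^k)_{i\in I}$ to be bounded; passing to weakly convergent subsequences, using weak lower semicontinuity of each $F_i$ and the fact that $\sum_{i\in I}\lambda_iy_i^k=x_k\to x$ is preserved under weak limits, gives $G(x)\leq\liminf_k G(x_k)$. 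With that step filled in, your proof is complete and is a clean, self-contained replacement for the external reference.
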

We recall that $J_{\partial f}=\text{Prox} f$ is Moreau's proximity operator (see~\cite{Mor}). Thus, we see that $\partial p_{\mu}({\bf f},{\boldsymbol \lambda})$ defines an averaging operation of the $\partial f_i$'s with the the weights $\lambda_i$ and parameter $\mu$ in the following manner:
\begin{equation}\label{resolvent average of sub}
\partial p_{\mu}({\bf f},{\boldsymbol \lambda})=\Big(\sum_{i\in I}\lambda_{i}(\partial f_{i}+\mu^{-1}\Id)^{-1}\Big)^{-1}-\mu^{-1}\Id.
\end{equation}
We now extend the reach of the averaging operation defined in \eqref{resolvent average of sub} and which is the subject matter of the present paper:
\begin{definition}[resolvent average]\label{resolvent average def}
The \emph{${\boldsymbol \lambda}$-weighted resolvent average} of ${\bf A}$ with parameter $\mu$ is defined by
\begin{equation}\label{ResAvgDef}
\average=\Big(\sum_{i\in I}\lambda_{i}(A_{i}+\mu^{-1}\Id)^{-1}\Big)^{-1}-\mu^{-1}\Id.
\end{equation}
We will simply write $\averageonelambda$ when $\mu=1$, $\mathcal{R}_\mu(\bf A)$ when all of the $\lambda_i$'s coincide and, finally, $\mathcal{R}(\bf A)$ when $\mu=1$ and all of the $\lambda_i$'s coincide.
\end{definition}
The motivation for naming~\eqref{ResAvgDef} the \emph{resolvent average} stems from the equivalence between equation \eqref{average of resolvents of sub} and equation \eqref{resolvent average of sub}, that is, from the  fact that equation \eqref{ResAvgDef} is equivalent to the equation
\begin{equation}\label{resolventidentity}
J_{\mu \average}=\sum_{i\in I}\lambda_{i}J_{\mu A_{i}}.
\end{equation}

The parameter $\mu$ has been useful in the study of the proximal average; in particular, when taking $\mu\to\infty$ or $\mu\to0^+$ one obtains classical averages of functions (see~\cite{BGLW}). We employ particular choices of the parameter $\mu$ in applications of the proximal average in the present paper as well.    

Suppose that for each $i\in I$, $f_i:\HH\to\RX$ is proper, lower semicontinuous, convex and set $A_i=\partial f_i$. By combining Definition~\ref{resolvent average def} together with equation~\eqref{resolvent average of sub} we see that
\begin{equation}\label{sub of proximal average}
\partial
p_\mu(\bold{f},\boldsymbol{\lambda})=\mathcal{R}_\mu(\boldsymbol{\partial}\bold{f},\boldsymbol{\lambda}).
\end{equation}

We end this introductory section with the following collection of
facts which we will employ in the remaining sections of the
paper. 

The next fact from~\cite{RockWets} was originally presented in the setting of finite-dimensional spaces, however, along with its proof from \cite{RockWets}, it holds in any Hilbert space.    
\begin{fact}[resolvent identity]\emph{\cite[Lemma 12.14]{RockWets}}
For any mapping $A:\HH\rightrightarrows\HH$,
\begin{equation}\label{must1}
J_A = \Id - J_{A^{-1}}.
\end{equation}
\end{fact}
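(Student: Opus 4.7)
The plan is to verify the set equality $J_A(x) = x - J_{A^{-1}}(x)$ for every $x \in \HH$ by a direct chain of equivalences, unwinding the definitions of the resolvent and of the inverse of a set-valued mapping. Nothing more sophisticated is needed, since the identity is purely formal and does not use monotonicity.

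First I would fix $x \in \HH$ and $p \in \HH$, and observe the following equivalences:
\begin{align*}
p \in J_A(x)
&\iff p \in (A+\Id)^{-1}(x) \\
&\iff x \in Ap + p \\
&\iff x - p \in Ap \\
&\iff p \in A^{-1}(x-p).
\end{align*}
At this point I would add $x - p$ to both sides of the last membership to reintroduce the identity operator, so as to recognize $J_{A^{-1}}$:
\begin{align*}
p \in A^{-1}(x-p)
&\iff x = (x-p) + p \in A^{-1}(x-p) + (x-p) \\
&\iff x \in (A^{-1}+\Id)(x-p) \\
&\iff x - p \in (A^{-1}+\Id)^{-1}(x) = J_{A^{-1}}(x) \\
&\iff p \in x - J_{A^{-1}}(x).
\end{align*}
Reading the two chains together yields $p \in J_A(x) \iff p \in (\Id - J_{A^{-1}})(x)$, establishing the claimed equality of set-valued mappings.

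The only subtlety worth flagging is that $A$ is not assumed to be (maximally) monotone or even single-valued, so $J_A$ and $J_{A^{-1}}$ may genuinely be set-valued or have empty values at some points; the argument is a pointwise equality of subsets of $\HH$, and the step $x - p \in Ap \iff p \in A^{-1}(x-p)$ is precisely the definition of the inverse relation. I do not anticipate any real obstacle, since all of the implications above are reversible equivalences.
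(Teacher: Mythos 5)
Your proof is correct: each step in your two chains of equivalences is valid for an arbitrary set-valued mapping $A$ (in particular, the step where you add the single-valued element $x-p$ to both sides is genuinely reversible, so the whole argument is a pointwise set equality and handles empty or multivalued resolvents without any extra care). The paper itself does not prove this Fact but defers to Lemma 12.14 of Rockafellar--Wets, and your argument is essentially that standard proof --- the same elementary unwinding of the definitions of the resolvent and of the inverse relation --- so your proposal matches the intended proof.
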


\begin{fact}\emph{\cite[Proposition 6.17]{RockWets}}
\label{NCresolvent}
Let $C$ be a nonempty, closed and convex subset of $\HH$. Then
$$J_{N_C} = (\Id + N_C)^{-1} = P_C.$$
\end{fact}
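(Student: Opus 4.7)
The first equality $J_{N_C} = (\Id + N_C)^{-1}$ is merely the definition of the resolvent applied to $A = N_C$, so everything rests on establishing $(\Id + N_C)^{-1} = P_C$. The plan is to fix an arbitrary $x \in \HH$ and show that $p = P_C(x)$ if and only if $x \in (\Id + N_C)(p)$, i.e.\ $x - p \in N_C(p)$.

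For the forward direction, I would invoke the existence and uniqueness of the nearest point projection onto a nonempty closed convex subset of a Hilbert space, together with its variational characterization: $p = P_C(x)$ iff $p \in C$ and $\scal{x - p}{y - p} \leq 0$ for every $y \in C$. Since $p \in C$, by definition of the normal cone this latter inequality is exactly the statement that $x - p \in N_C(p)$. Hence $x \in p + N_C(p) = (\Id + N_C)(p)$, which gives $p \in (\Id + N_C)^{-1}(x)$.

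For the reverse direction, suppose $p \in (\Id + N_C)^{-1}(x)$, so that $x - p \in N_C(p)$. Since $N_C$ is empty off $C$, we automatically have $p \in C$, and the defining inequality $\scal{x - p}{y - p} \leq 0$ for all $y \in C$ holds. By the same variational characterization this forces $p = P_C(x)$. Combining both directions yields $(\Id + N_C)^{-1}(x) = \{P_C(x)\}$ as a singleton for every $x$.

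I do not anticipate a real obstacle here: the content of the fact is essentially a translation between the optimality condition for the nearest-point problem and membership in the normal cone. The only care needed is to make sure the projection is well-defined (closedness plus convexity plus the Hilbert-space parallelogram-law argument) and that $N_C(p) = \emp$ whenever $p \notin C$, so that the equivalence $p \in (\Id + N_C)^{-1}(x) \iff p \in C \text{ and } x - p \in N_C(p)$ is genuine.
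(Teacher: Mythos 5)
Your proof is correct. Note that the paper does not prove this statement at all: it is imported as a Fact with a citation to \cite[Proposition~6.17]{RockWets}, so there is no internal proof to compare against. Your argument --- reducing $(\Id+N_C)^{-1}=P_C$ to the variational characterization $p=P_C(x)\iff p\in C$ and $\scal{x-p}{y-p}\leq 0$ for all $y\in C$, which by definition of $N_C=\partial\iota_C$ is exactly $x-p\in N_C(p)$, together with the observation that $N_C(p)=\emp$ for $p\notin C$ --- is the standard textbook proof of this fact and is complete, modulo the projection theorem which you correctly flag as the only external ingredient.
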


\begin{fact}\emph{\cite[Proposition 4.2]{BC2011}}
\label{f:firm}
Let $T\colon \HH \to \HH$.
Then the following are equivalent:
\begin{enumerate}
\item $T$ is firmly nonexpansive.
\item $\Id-T$ is firmly nonexpansive.
\item \label{f:firm-nonexp}$2T-\Id$ is nonexpansive.
\item \label{f:firm-inequal}$(\forall x\in \HH)(\forall y\in \HH)$
$\|Tx-Ty\|^2 \leq \scal{x-y}{Tx-Ty}$.
\end{enumerate}
\end{fact}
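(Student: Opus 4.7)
The plan is to treat the inner-product inequality (iv) as a ``hub'' and show that each of (i), (ii), (iii) is equivalent to it; all four equivalences then follow from elementary expansions of squared norms via $\|u-v\|^2 = \|u\|^2 - 2\scal{u}{v} + \|v\|^2$, applied to suitable combinations of $x-y$ and $Tx-Ty$. I do not expect any real obstacle here: the difficulty, such as it is, is merely bookkeeping in ensuring each algebraic manipulation is reversible, so that implications become equivalences.

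For the implication (i) $\Leftrightarrow$ (iv), I would expand
\[
\|(\Id-T)x-(\Id-T)y\|^2 = \|x-y\|^2 - 2\scal{x-y}{Tx-Ty} + \|Tx-Ty\|^2,
\]
substitute this into the firm nonexpansiveness inequality $\|Tx-Ty\|^2 + \|(\Id-T)x-(\Id-T)y\|^2 \le \|x-y\|^2$, and cancel $\|x-y\|^2$ from both sides to obtain $2\|Tx-Ty\|^2 \le 2\scal{x-y}{Tx-Ty}$, which is exactly (iv); every step reverses. For (iii) $\Leftrightarrow$ (iv), I would compute
\[
\|(2T-\Id)x-(2T-\Id)y\|^2 = 4\|Tx-Ty\|^2 - 4\scal{Tx-Ty}{x-y} + \|x-y\|^2,
\]
so that nonexpansiveness of $2T-\Id$ is equivalent (after subtracting $\|x-y\|^2$ and dividing by $4$) to the inequality $\|Tx-Ty\|^2 \le \scal{Tx-Ty}{x-y}$ of (iv). Finally, (i) $\Leftrightarrow$ (ii) is immediate because the defining inequality of firm nonexpansiveness is symmetric in $T$ and $\Id-T$ (since $\Id - (\Id-T) = T$), so replacing $T$ by $\Id-T$ leaves the condition unchanged. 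Combining these three equivalences establishes the cycle and completes the proof.
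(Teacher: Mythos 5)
Your proof is correct and complete: each of the equivalences (i)$\Leftrightarrow$(iv), (iii)$\Leftrightarrow$(iv), and (i)$\Leftrightarrow$(ii) follows from the reversible squared-norm expansions you give, and chaining them yields the full statement. The paper itself imports this result as a Fact from \cite[Proposition~4.2]{BC2011} without proof, and your argument is essentially the standard one used there, so there is no divergence to report.
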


\begin{corollary}\label{average of firmly}
Suppose that for each $i\in I$, $T_i:\HH\to\HH$ is firmly nonexpansive. Then $T=\sum_{i\in I}\lambda_i T_i$ is firmly nonexpansive. 
\end{corollary}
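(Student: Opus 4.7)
The plan is to reduce the problem to the fact that convex combinations of nonexpansive mappings are nonexpansive, by passing through the reflected operators $2T_i - \Id$.

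By Fact~\ref{f:firm}, and specifically the equivalence between firm nonexpansiveness of $T_i$ and nonexpansiveness of $2T_i - \Id$, it suffices to show that $2T - \Id$ is nonexpansive. First I would observe the elementary algebraic identity
\begin{equation*}
2T - \Id \;=\; 2\sum_{i\in I}\lambda_i T_i - \Big(\sum_{i\in I}\lambda_i\Big)\Id \;=\; \sum_{i\in I}\lambda_i\bigl(2T_i - \Id\bigr),
\end{equation*}
which uses only that the $\lambda_i$'s sum to $1$. Then, using the triangle inequality together with the nonexpansiveness of each $2T_i - \Id$ (supplied by Fact~\ref{f:firm}\eqref{f:firm-nonexp}), for any $x,y\in\HH$ I would estimate
\begin{equation*}
\|(2T-\Id)x - (2T-\Id)y\| \;\leq\; \sum_{i\in I}\lambda_i\|(2T_i-\Id)x - (2T_i-\Id)y\| \;\leq\; \sum_{i\in I}\lambda_i\|x-y\| \;=\; \|x-y\|.
\end{equation*}
Invoking Fact~\ref{f:firm}\eqref{f:firm-nonexp} once more in the reverse direction then yields that $T$ is firmly nonexpansive.

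There is no genuine obstacle here: the key point is simply that the convex-combination structure is preserved under the affine shift $T\mapsto 2T-\Id$, which converts the quadratic inequality of firm nonexpansiveness into the linear inequality of nonexpansiveness where convex combinations are handled transparently by the triangle inequality. An alternative route through Fact~\ref{f:firm}\eqref{f:firm-inequal} would also work, combining the inequalities $\|T_i x - T_i y\|^2 \leq \scal{x-y}{T_i x - T_i y}$ linearly and then applying Jensen's inequality to the convex function $\|\cdot\|^2$; but the reflected-operator approach is shorter and avoids that extra convexity step.
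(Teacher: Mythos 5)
Your proof is correct and follows essentially the same route as the paper: both pass to the reflected operators $N_i = 2T_i-\Id$ via Fact~\ref{f:firm}, observe that their convex combination $\sum_{i\in I}\lambda_i N_i = 2T-\Id$ is nonexpansive, and convert back. The only difference is that you spell out the triangle-inequality estimate that the paper leaves implicit.
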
 

\begin{proof}
Employing Fact~\ref{f:firm}, for each $i\in I$, letting $N_i=2T_i-\Id$, we see that $N_i$ is nonexpansive. Letting $N=\sum_{i\in I}\lambda_iN_i$, we see that $N$ is nonexpansive. Consequently, the mapping $T=\frac{1}{2}(N+\Id)$ is firmly nonexpansive.
\end{proof}

\begin{fact}\emph{\cite[Lemma 2.13]{BC2011}}
For each $i\in I$ let $x_i$ and $u_i$ be points in $\HH$ and
$\alpha_i\in\RR$ be such that $\sum_{i\in I}\alpha_{i}=1$. Then 
\begin{equation}\label{f:monoident}
\Scal{\sum_{i\in I}\alpha_{i}x_{i}}{\sum_{j\in I}\alpha_{j}u_{j}}+\frac{1}{2}\sum_{(i,j)\in I\times I}
\alpha_{i}\alpha_{j}\scal{x_{i}-x_{j}}{u_{i}-u_{j}}=\sum_{i\in I}\alpha_{i}\scal{x_{i}}{u_{i}}.
\end{equation}
Consequently,
\begin{equation}\label{normstrongconvex}
\bigg\|\sum_{i\in I}\alpha_{i}x_{i}\bigg\|^2=\sum_{i\in I}\alpha_{i}\|x_{i}\|^2
-\frac{1}{2}\sum_{(i,j)\in I\times I}
\alpha_{i}\alpha_{j}\|x_{i}-x_{j}\|^2.
\end{equation}
\end{fact}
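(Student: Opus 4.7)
The plan is to prove \eqref{f:monoident} by direct expansion and then derive \eqref{normstrongconvex} as the specialization $u_i=x_i$. The key identity driving everything is the bilinear expansion
\[
\scal{x_i-x_j}{u_i-u_j}=\scal{x_i}{u_i}-\scal{x_i}{u_j}-\scal{x_j}{u_i}+\scal{x_j}{u_j},
\]
so the plan for \eqref{f:monoident} is to expand the double sum $\sum_{(i,j)\in I\times I}\alpha_i\alpha_j\scal{x_i-x_j}{u_i-u_j}$ into four pieces and simplify each using $\sum_{i\in I}\alpha_i=1$.

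First I would observe that, by symmetry of the index set $I\times I$, the two diagonal-type pieces combine as
\[
\sum_{(i,j)\in I\times I}\alpha_i\alpha_j\bigl(\scal{x_i}{u_i}+\scal{x_j}{u_j}\bigr)
=2\Bigl(\sum_{i\in I}\alpha_i\Bigr)\Bigl(\sum_{j\in I}\alpha_j\scal{x_j}{u_j}\Bigr)
=2\sum_{i\in I}\alpha_i\scal{x_i}{u_i},
\]
where the last equality uses $\sum_{i\in I}\alpha_i=1$. Similarly, the two cross-type pieces combine as
\[
\sum_{(i,j)\in I\times I}\alpha_i\alpha_j\bigl(\scal{x_i}{u_j}+\scal{x_j}{u_i}\bigr)
=2\Scal{\sum_{i\in I}\alpha_i x_i}{\sum_{j\in I}\alpha_j u_j}.
\]
Putting these two computations together and dividing by $2$ yields exactly the rearrangement of \eqref{f:monoident}, after moving the inner-product term to the left-hand side.

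For \eqref{normstrongconvex}, I would simply specialize \eqref{f:monoident} by taking $u_i=x_i$ for every $i\in I$. Then $\scal{x_i}{u_i}=\|x_i\|^2$, $\scal{x_i-x_j}{u_i-u_j}=\|x_i-x_j\|^2$, and the left-hand inner product becomes $\bigl\|\sum_{i\in I}\alpha_i x_i\bigr\|^2$; rearranging gives \eqref{normstrongconvex}.

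There is no real obstacle here: the argument is a direct bilinear expansion together with the normalization $\sum_{i\in I}\alpha_i=1$. The only point to be a little careful about is bookkeeping of the symmetric pairing between indices $i$ and $j$, which is what produces the factor $\tfrac{1}{2}$ in both identities.
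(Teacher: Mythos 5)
Your proof is correct and complete. Note that the paper does not prove this statement at all: it is stated as a Fact imported from \cite[Lemma 2.13]{BC2011}, so there is no in-paper argument to compare against. Your direct bilinear expansion of $\scal{x_i-x_j}{u_i-u_j}$, combining the diagonal terms via the normalization $\sum_{i\in I}\alpha_i=1$ and the cross terms into $\Scal{\sum_{i\in I}\alpha_i x_i}{\sum_{j\in I}\alpha_j u_j}$, is the standard verification, and the specialization $u_i=x_i$ correctly yields \eqref{normstrongconvex}.
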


\begin{corollary}\label{equality in convex combination of firmly nonexpansive}
Suppose that for each $i\in I$, $N_i:\HH\to\HH$ is nonexpansive, $T_i:\HH\to\HH$ is firmly nonexpansive and set $N=\sum_{i\in I}\lambda_i N_i$ and $T=\sum_{i\in I}\lambda_i T_i$. Let $x$ and $y$ be points in $\HH$ such that $\|Tx-Ty\|^2=\scal{x-y}{Tx-Ty}$. Then $T_i x-T_iy=Tx-Ty$ for every $i\in I$. As a consequence, the following assertions hold: 
\begin{enumerate}
\item 
If there exits $i_0\in I$ such that $T_{i_0}$ is injective, then $T$ is injective.\label{firmly injective} 
\item 
If $x$ and $y$ are points in $\HH$ such that $\|Nx-Ny\|=\|x-y\|$, then $N_i x-N_iy=Nx-Ny$ for every $i\in I$.\label{equality in convex combination of nonexpansive}
\item
\emph{\cite[Lemma 1.4]{Reich 1983}} If $\bigcap_{i\in I}\Fix N_i\neq\varnothing$, then 
$
\Fix N=\bigcap_{i\in I}\Fix N_i.
$\label{fixed points of average of firmly nonexpansive mappings}
\end{enumerate} 
\end{corollary}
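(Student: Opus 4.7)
The plan is to establish the main equality assertion first via the firm-nonexpansiveness characterization from Fact~\ref{f:firm}\ref{f:firm-inequal}, and then deduce the three consequences in order, with \ref{equality in convex combination of nonexpansive} serving as the key bridge to \ref{fixed points of average of firmly nonexpansive mappings} via the correspondence $T_i=\tfrac{1}{2}(N_i+\Id)$.

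For the principal claim, I would begin with the pointwise inequality $\|T_ix-T_iy\|^2\leq\scal{x-y}{T_ix-T_iy}$ provided by Fact~\ref{f:firm}\ref{f:firm-inequal} for each $i\in I$. Taking the $\lambda_i$-weighted sum yields $\sum_{i\in I}\lambda_i\|T_ix-T_iy\|^2\leq \scal{x-y}{Tx-Ty}$, which by hypothesis equals $\|Tx-Ty\|^2$. On the other hand, applying identity~\eqref{normstrongconvex} to $Tx-Ty=\sum_{i\in I}\lambda_i(T_ix-T_iy)$ gives
\[
\|Tx-Ty\|^2=\sum_{i\in I}\lambda_i\|T_ix-T_iy\|^2-\tfrac{1}{2}\sum_{(i,j)\in I\times I}\lambda_i\lambda_j\|(T_ix-T_iy)-(T_jx-T_jy)\|^2.
\]
Combining these two lines forces $\sum_{(i,j)}\lambda_i\lambda_j\|(T_ix-T_iy)-(T_jx-T_jy)\|^2\leq 0$, and since every $\lambda_i>0$, each $T_ix-T_iy$ is a common vector, which must then coincide with its weighted average $Tx-Ty$.

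Consequence~\ref{firmly injective} is then immediate: if $Tx=Ty$, the hypothesis $\|Tx-Ty\|^2=\scal{x-y}{Tx-Ty}$ holds trivially (both sides are zero), so $T_{i_0}x-T_{i_0}y=Tx-Ty=0$ and injectivity of $T_{i_0}$ gives $x=y$. For~\ref{equality in convex combination of nonexpansive}, I would set $T_i=\tfrac{1}{2}(N_i+\Id)$, which is firmly nonexpansive by Fact~\ref{f:firm}\ref{f:firm-nonexp}, and observe $T=\tfrac{1}{2}(N+\Id)$. A short expansion under the assumption $\|Nx-Ny\|=\|x-y\|$ verifies that $\|Tx-Ty\|^2=\scal{x-y}{Tx-Ty}$, so the principal claim applies and delivers $T_ix-T_iy=Tx-Ty$, from which $N_ix-N_iy=Nx-Ny$ follows by subtracting the $\tfrac{1}{2}(x-y)$ contributions.

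Finally, for~\ref{fixed points of average of firmly nonexpansive mappings}, the inclusion $\bigcap_{i\in I}\Fix N_i\subseteq \Fix N$ is by direct averaging. For the reverse, pick any $y\in\bigcap_{i\in I}\Fix N_i\subseteq\Fix N$ and let $x\in \Fix N$; then $\|Nx-Ny\|=\|x-y\|$, so \ref{equality in convex combination of nonexpansive} gives $N_ix-y=N_ix-N_iy=Nx-Ny=x-y$, whence $N_ix=x$ for every $i\in I$. The only subtle step is the main claim where \eqref{normstrongconvex} must be deployed to convert the slack in the firm-nonexpansiveness inequalities into the exact equality of the terms $T_ix-T_iy$; everything else follows cleanly.
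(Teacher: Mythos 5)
Your proposal is correct and follows essentially the same route as the paper's proof: both combine the identity~\eqref{normstrongconvex} with the firm-nonexpansiveness inequality of Fact~\ref{f:firm}\ref{f:firm-inequal} to force the pairwise differences $(T_ix-T_iy)-(T_jx-T_jy)$ to vanish, and then derive \ref{firmly injective}, \ref{equality in convex combination of nonexpansive}, and \ref{fixed points of average of firmly nonexpansive mappings} exactly as the paper does, including the substitution $T_i=\tfrac{1}{2}(N_i+\Id)$ and the fixed-point argument via a common fixed point $y$. The only difference is the order in which the two inequalities are chained, which is immaterial.
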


\begin{proof}
By employing equality~\eqref{normstrongconvex} and then the firm nonexpansiveness of each $T_i$ we obtain
\begin{align*}
\scal{x-y}{Tx-Ty}&=\|T x-T y\|^2\\
&=\sum_{i\in I}\lambda_i\|T_ix-T_iy\|^2-\frac{1}{2}\sum_{(i,j)\in I\times I}\lambda_{i}\lambda_{j}\|(T_i x-T_i y)-(T_j x-T_j y)\|^2\\
&\leq\sum_{i\in I}\lambda_i\scal{x-y}{T_i x-T_iy}-\frac{1}{2}\sum_{(i,j)\in I\times I}\lambda_{i}\lambda_{j}\|(T_i x-T_i y)-(T_j x-T_j y)\|^2\\
&=\scal{x-y}{Tx-Ty}-\frac{1}{2}\sum_{(i,j)\in I\times I}\lambda_{i}\lambda_{j}\|(T_i x-T_i y)-(T_j x-T_j y)\|^2.
\end{align*}
Thus, we see that $\|(T_i x-T_i y)-(T_j x-T_j y)\|^2=0$ for every $i$ and $j$ in $I$. Hence, $T_i x-T_i y=Tx-Ty$ for every $i\in I$. (i): Follows immediately. (ii): For each $i\in I$ we suppose that $T_i=\frac{1}{2}(N_i+\Id)$ so that $T_i$ as well as the mapping $T=\sum_{i\in I}\lambda_i T_i=\frac{1}{2}(N+\Id)$ are firmly nonexpansive. Then the equality $\|Nx-Ny\|^2=\|x-y\|^2$ implies that $\|Tx-Ty\|^2=\scal{x-y}{Tx-Ty}$. Consequently, we see that $T_i x-T_iy=Tx-Ty$ for every $i\in I$, which, in turn, implies that $N_i x-N_iy=Nx-Ny$ for every $i\in I$. (iii): The inclusion $\Fix N\supseteq\bigcap_{i\in I}\Fix N_i$ is trivial. Now, suppose that $x\in\Fix N$ and let $y\in\bigcap_{i\in I}\Fix N_i\subseteq\Fix N$. Then $Nx-Ny=x-y$, in particular, $\|Nx-Ny\|=\|x-y\|$. Consequently, for each $i\in I$,
$
x-y=Nx-Ny=N_i x-N_i y=N_i x-y,
$ 
which implies that $x=N_i x$, as asserted by (iii). 
\end{proof}

\begin{fact}[Minty's Theorem]\emph{\cite{Minty} (see also \cite[Theorem 21.1]{BC2011})}\label{MintyThm}
Let $A: \HH \rightrightarrows \HH$ be monotone. Then
\begin{equation}\label{Minty's parametrization}
\gra A =\big\{ (J_Ax,(\Id-J_A)x)\ \big|\ x\in \ran(\Id+A)\big\}.
\end{equation}
Furthermore, $A$ is maximally monotone if and only if $\ran (\Id + A) = \HH$. 
\end{fact}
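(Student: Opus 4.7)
The plan is to establish the parametrization first by showing that $J_A$ is single-valued on $\ran(\Id+A)$ whenever $A$ is monotone, and then to deduce the two set-inclusions via the bijective change of variable $x = y+v$ for $(y,v) \in \gra A$. The maximality statement will then reduce easily in one direction and requires the deeper part of Minty's original argument in the other.

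For the single-valuedness of $J_A$, suppose $y_1, y_2 \in J_A x$. By definition this means $x-y_i \in A y_i$ for $i=1,2$. Monotonicity applied to the pairs $(y_1, x-y_1)$ and $(y_2, x-y_2)$ in $\gra A$ yields
\begin{equation*}
0 \leq \scal{y_1 - y_2}{(x-y_1) - (x-y_2)} = -\|y_1 - y_2\|^2,
\end{equation*}
so $y_1 = y_2$. Hence $J_A$ is a (possibly partial) function with domain $\ran(\Id+A)$.

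For the parametrization, I would verify both inclusions. If $(y,v) \in \gra A$, set $x := y+v$. Then $x \in \ran(\Id+A)$ and $y \in (\Id+A)^{-1}x = J_A x$; single-valuedness forces $J_A x = y$, so $(\Id-J_A)x = v$, placing $(y,v)$ in the right-hand set. Conversely, for $x \in \ran(\Id+A)$ set $y := J_A x$; then $x \in (\Id+A)y$, i.e., $x-y \in Ay$, so $(J_A x, (\Id-J_A)x) = (y, x-y) \in \gra A$. These steps are routine.

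The main obstacle is the forward direction of the maximality claim, namely that maximal monotonicity of $A$ implies $\ran(\Id+A) = \HH$; this is the substantive content of Minty's theorem, and I would appeal to the classical proof (approximation of $\ran(\Id+A)$ by a Banach fixed-point argument on $J_A$ composed with a contraction, together with a density/closedness argument to conclude surjectivity). The reverse direction is straightforward: assume $\ran(\Id+A) = \HH$ and let $B$ be any monotone extension of $A$. Given $(y,v) \in \gra B$, surjectivity produces $(y',v') \in \gra A \subseteq \gra B$ with $y'+v' = y+v$, so $v-v' = -(y-y')$; monotonicity of $B$ gives
\begin{equation*}
0 \leq \scal{y-y'}{v-v'} = -\|y-y'\|^2,
\end{equation*}
whence $y = y'$ and $v = v'$, so $(y,v) \in \gra A$ and $B = A$. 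Thus $A$ is maximally monotone.
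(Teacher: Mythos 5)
The paper offers no proof of this statement: it is imported as a Fact, with references to Minty's article and to \cite[Theorem 21.1]{BC2011}, so there is no internal argument to compare yours against. Judged on its own, what you actually prove is correct and cleanly executed: the single-valuedness of $J_A$ on $\ran(\Id+A)$ via monotonicity, both inclusions of \eqref{Minty's parametrization}, and the implication that $\ran(\Id+A)=\HH$ forces maximality (your extension argument is the standard one and is complete). The only substantive content you do not supply is the converse implication --- maximal monotonicity implies $\ran(\Id+A)=\HH$ --- which you explicitly defer to ``the classical proof''. Since the paper defers the entire Fact to the literature, this deferral is consistent with the paper's own treatment. One caution, though: your parenthetical sketch of that classical proof (a Banach fixed-point argument on $J_A$ composed with a contraction, plus a density/closedness argument) does not match the cited arguments and would be hard to make rigorous, because $\dom J_A = \ran(\Id+A)$ is precisely the set whose size is at issue, and fixed points of contractions built from $J_A$ detect zeros of translates of $A$ rather than surjectivity of $\Id+A$. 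The closedness half of your sketch is salvageable (it follows from firm nonexpansiveness of $J_A$ on its domain together with maximality), but the density half is where all the difficulty lives. The proofs in the cited sources rest on genuinely different machinery: Minty's original variational argument, the Debrunner--Flor or Kirszbraun--Valentine extension theorems, or, as in \cite[Theorem~21.1]{BC2011}, convex analysis via Fenchel duality applied to a Fitzpatrick-type function. If you are ever required to prove the hard direction rather than cite it, that is the route to take.
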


\begin{fact}
\label{f:Minty}
{\rm (\cite{EckBer} and \cite{Minty}.)}
Let $T\colon \HH \to \HH$ and let $A\colon \HH \To \HH$.
Then the following assertions hold:
\begin{enumerate}
\item
\label{f:Mintyi}
If $T$ is firmly nonexpansive,
then $B = T^{-1}-\Id$ is maximally monotone and $J_B=T$.
\item
If $A$ is maximally monotone, then $J_A$ has full domain, and is
single-valued and firmly nonexpansive, and $A=J_A^{-1}-\Id$. 
\end{enumerate}
\end{fact}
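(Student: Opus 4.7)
The plan is to leverage Minty's theorem (Fact~\ref{MintyThm}) together with the equivalent characterizations of firm nonexpansiveness supplied by Fact~\ref{f:firm}, so that both assertions fall out with almost no direct computation.

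For (i), I would begin by observing that since $T$ is (firmly nonexpansive and hence) single-valued with $\dom T = \HH$, the identity $\Id + B = T^{-1}$ is literal at the level of graphs, and therefore
$$
\gra B = \{(Ta,\,a - Ta) \mid a \in \HH\}.
$$
Monotonicity of $B$ then reduces to checking that for any $a_{1}, a_{2} \in \HH$,
$$
\scal{Ta_{1}-Ta_{2}}{(a_{1}-a_{2}) - (Ta_{1}-Ta_{2})} \geq 0,
$$
which is exactly the inequality $\|Ta_{1}-Ta_{2}\|^{2} \leq \scal{a_{1}-a_{2}}{Ta_{1}-Ta_{2}}$ of Fact~\ref{f:firm}\ref{f:firm-inequal}. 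Maximality is immediate from Fact~\ref{MintyThm}, since $\ran(\Id + B) = \ran T^{-1} = \dom T = \HH$. Finally, inverting gives $J_{B} = (\Id + B)^{-1} = (T^{-1})^{-1} = T$, the last equality holding because $T$ is single-valued.

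For (ii), Minty's theorem yields $\ran(\Id + A) = \HH$, so $\dom J_{A} = \ran(\Id + A) = \HH$. To show $J_{A}$ is single-valued, if $x_{1}, x_{2} \in J_{A}y$ then $y - x_{i} \in Ax_{i}$, and monotonicity of $A$ gives
$$
0 \leq \scal{x_{1}-x_{2}}{(y-x_{1})-(y-x_{2})} = -\|x_{1}-x_{2}\|^{2},
$$
forcing $x_{1}=x_{2}$. For firm nonexpansiveness, set $x_{i} = J_{A}y_{i}$; then $y_{i} - x_{i} \in Ax_{i}$ and monotonicity produces
$$
\|x_{1}-x_{2}\|^{2} \leq \scal{x_{1}-x_{2}}{y_{1}-y_{2}},
$$
which is Fact~\ref{f:firm}\ref{f:firm-inequal} applied to $J_{A}$. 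The identity $A = J_{A}^{-1} - \Id$ is obtained by inverting the defining relation $J_{A} = (\Id + A)^{-1}$.

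The main obstacle is bookkeeping around set-valued inversion: one must be careful that $(T^{-1})^{-1} = T$ holds as a set-valued identity (which it does, precisely because $T$ is single-valued with full domain), and that in (ii) single-valuedness of $J_{A}$ is established \emph{before} invoking the scalar form of Fact~\ref{f:firm}\ref{f:firm-inequal}. Neither step is deep, but both must be argued in the correct order to avoid a circular appeal.
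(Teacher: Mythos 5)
Your proof is correct. Note that the paper offers no proof of this statement at all---it is imported as a Fact with citations to \cite{EckBer} and \cite{Minty}---so there is nothing internal to compare against; your argument is the standard one underlying those references, and it is non-circular within the paper, since it relies only on Minty's theorem (Fact~\ref{MintyThm}) and the characterization of firm nonexpansiveness in Fact~\ref{f:firm}\ref{f:firm-inequal}, both of which are stated independently of this Fact. One small refinement: the identity $(T^{-1})^{-1}=T$ holds for \emph{arbitrary} set-valued mappings, because graph inversion is an involution; single-valuedness and full domain of $T$ are needed only to conclude that the relation $(\Id+B)^{-1}$ is a genuine single-valued mapping defined on all of $\HH$, i.e., that $J_B$ may legitimately be identified with the mapping $T$.
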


\begin{definition}[Fitzpatrick function]\emph{\cite{Fitz}}
With the mapping $A:\HH\rightrightarrows\HH$ we associate the \emph{Fitzpatrick function} $F_A:\HH\times\HH\to\RX$, defined by
\begin{equation}\label{Fitzpatrick function}
F_A(x,v)=\sup_{(z,w)\in\gra A}\big(\scal{w}{x}+\scal{v}{z}-\scal{w}{z}\big),\ \ \ \ \ \ (x,v)\in\HH\times\HH.
\end{equation}
\end{definition}

\begin{definition}[rectangular and paramonotone mappings]\label{D:para rec def}
The monotone mapping $A:\HH\rightrightarrows\HH$ is said to be \emph{rectangular}
{\rm \cite[Definition~31.5]{Simons2}} (also known as \emph{$3^*$ monotone}) if for every $x\in\dom A$ and every $v\in\ran A$ we have
\begin{equation}\label{rectangular definition}
\inf_{(z,w)\in\gr A} \scal{v-w}{x-z}>\minf,
\end{equation}
equivalently, if
\begin{equation}\label{Fitzpatrick rectangular}
\dom A\times\ran A\ \subseteq\ \dom F_A.
\end{equation}
The mapping $A:\HH\rightrightarrows\HH$ is said to be \emph{paramonotone} if whenever we have a pair of points $(x,v)$ and $(y,u)$ in $\gr A$ such that $\scal{x-y}{v-u}=0$, then $(x,u)$ and $(y,v)$ are also in $\gr A$.
\end{definition}

\begin{fact}\emph{\cite[Remark 4.11]{bbw2007}, \cite[Corollary 4.11]{bwy2012}}\label{f:pararecsame}
Let $A\in \RR^{N\times N}$ be monotone and set $A_{+} = \tfrac{1}{2}A+\tfrac{1}{2}A^\intercal$. Then the following assertions are equivalent:
\begin{enumerate}
\item $A$ is paramonotone;
\item $A$ is rectangular;
\item $\rank A=\rank A_{+}$;
\item $\ran A=\ran A_{+}$.
\end{enumerate}
\end{fact}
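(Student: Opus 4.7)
The plan is to exploit the rigid linear-algebraic structure of monotone matrices. The starting observation is that for any monotone $A\in\RR^{N\times N}$ the symmetric part $A_+$ is positive semidefinite, hence $\ran A_+ = (\ker A_+)^\perp$. Next, if $Ax=0$ then $\scal{x}{A_+x}=\scal{x}{Ax}=0$, and positive semidefiniteness of $A_+$ forces $A_+x=0$; thus $\ker A\subseteq \ker A_+$, and by the same argument applied to $A^\intercal$, $\ker A^\intercal\subseteq \ker A_+$. Taking orthogonal complements and using $\ran A=(\ker A^\intercal)^\perp$ and $\ran A^\intercal=(\ker A)^\perp$ yields $\ran A_+\subseteq \ran A\cap\ran A^\intercal$. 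These inclusions are the skeleton of everything that follows.

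With these inclusions in hand, (iii)$\Leftrightarrow$(iv) is immediate: $\ran A_+\subseteq\ran A$ together with the equality of dimensions $\rank A_+=\rank A$ forces set equality, and the converse direction is trivial. For (i)$\Leftrightarrow$(iv) I would reformulate paramonotonicity for a linear relation: since $\scal{x-y}{Ax-Ay}=\scal{x-y}{A_+(x-y)}$, the paramonotone hypothesis is that $x-y\in\ker A_+$ forces $A(x-y)=0$, i.e.\ $\ker A_+\subseteq\ker A$; combined with the reverse inclusion above this is $\ker A=\ker A_+$, which by complementation is exactly (iv).

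The equivalence (ii)$\Leftrightarrow$(iv) I would obtain by a direct computation of the Fitzpatrick function for a linear $A$. Writing
\begin{equation*}
F_A(x,v)=\sup_{z\in\RR^N}\bigl(\scal{Az}{x}+\scal{v}{z}-\scal{Az}{z}\bigr)
=\sup_{z\in\RR^N}\bigl(\scal{z}{v+A^\intercal x}-\scal{z}{A_+z}\bigr),
\end{equation*}
the supremum of the linear-minus-PSD-quadratic expression is finite precisely when $v+A^\intercal x\in\ran A_+=(\ker A_+)^\perp$. Since for a linear relation $\dom A=\RR^N$, rectangularity (ii) is equivalent to the statement that $v+A^\intercal x\in\ran A_+$ for every $x\in\RR^N$ and every $v\in\ran A$. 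Specialising $x=0$ yields $\ran A\subseteq\ran A_+$, hence (iv); conversely, if (iv) holds then by (iii)$\Leftrightarrow$(iv) and $\ran A_+\subseteq\ran A^\intercal$ one obtains $\ran A=\ran A^\intercal=\ran A_+$, so $v+A^\intercal x\in\ran A_+$ automatically and rectangularity follows.

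The only slightly delicate step is verifying that the conditional supremum $\sup_z(\scal{z}{c}-\scal{z}{A_+z})$ is finite exactly on $\ran A_+$; this is a standard fact about PSD quadratic forms (decompose $c$ along $\ran A_+\oplus\ker A_+$ and observe that any component in $\ker A_+$ produces an unbounded linear direction). Everything else is bookkeeping with kernels, ranges, and their orthogonal complements.
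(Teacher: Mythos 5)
The paper offers no proof of this statement to compare against: it is quoted as a Fact from the cited references, so the only benchmark is those sources, and your route --- reducing the Fitzpatrick function of a linear monotone operator to the conjugate of the positive semidefinite quadratic form $z\mapsto\scal{z}{A_+z}$, which is finite exactly on $\ran A_+$ --- is essentially the argument used there. Your skeleton inclusions $\ker A\subseteq\ker A_+$ and $\ker A^\intercal\subseteq\ker A_+$, hence $\ran A_+\subseteq\ran A\cap\ran A^\intercal$, are the right tools, and your treatments of (iii)$\Leftrightarrow$(iv) and (ii)$\Leftrightarrow$(iv) are correct as written.

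One step is stated imprecisely and needs a (one-line) repair, though no new idea is required. In (i)$\Leftrightarrow$(iv) you correctly reduce paramonotonicity to $\ker A_+\subseteq\ker A$, hence, with the reverse inclusion, to $\ker A=\ker A_+$, and then assert that this ``by complementation is exactly (iv).'' It is not: taking orthogonal complements gives
\begin{equation*}
(\ker A)^{\perp}=(\ker A_{+})^{\perp},\qquad\text{that is,}\qquad \ran A^{\intercal}=\ran A_{+},
\end{equation*}
which is assertion (iv) for $A^{\intercal}$, not for $A$. The fix uses only what you already established: $\ran A^{\intercal}=\ran A_{+}$ forces $\rank A_{+}=\rank A^{\intercal}=\rank A$, which is (iii), and your own (iii)$\Rightarrow$(iv) argument (the inclusion $\ran A_{+}\subseteq\ran A$ plus equality of dimensions) then yields $\ran A=\ran A_{+}$; conversely, (iv) gives equal ranks, so the inclusion $\ran A_{+}\subseteq\ran A^{\intercal}$ becomes an equality, and complementation returns $\ker A=\ker A_{+}$. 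You should also spell out the ``if'' half of your finiteness criterion for $\sup_{z}\bigl(\scal{z}{c}-\scal{z}{A_+z}\bigr)$: when $c\in\ran A_+$, the component of $z$ in $\ker A_+$ contributes nothing, and on $\ran A_+$ the form is positive definite, so the supremum is attained; your parenthetical only covers the unboundedness direction. With these two patches the proof is complete and correct.
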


\begin{fact}\label{linear}\emph{\cite[I.2.3 and I.4]{Cross}}
 Let $A,B$ be linear relations. Then $A^{-1}$ and $A+B$ are linear relations.
\end{fact}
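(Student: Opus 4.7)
The plan is to verify both claims directly from the definition: a mapping $C\colon\HH\rightrightarrows\HH$ is a linear relation exactly when $\gra C$ is a linear subspace of $\HH\times\HH$, so it suffices to show in each case that the relevant graph is closed under the vector space operations on $\HH\times\HH$.

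For $A^{-1}$, I would observe that the swap map $\sigma\colon\HH\times\HH\to\HH\times\HH$, $(x,u)\mapsto(u,x)$, is a linear isomorphism, and $\gra A^{-1}=\sigma(\gra A)$ by the definition of the inverse relation $(x\in A^{-1}u \iff u\in Ax)$. Since linear maps send linear subspaces to linear subspaces, $\gra A^{-1}$ is a linear subspace of $\HH\times\HH$, so $A^{-1}$ is a linear relation.

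For $A+B$, I would write out the graph explicitly as
\begin{equation*}
\gra(A+B)=\big\{(x,u+v)\ \big|\ (x,u)\in\gra A,\ (x,v)\in\gra B\big\},
\end{equation*}
and then check closure under scalar multiplication and addition. For $\alpha\in\RR$ and $(x,u+v)\in\gra(A+B)$, linearity of $\gra A$ and $\gra B$ gives $(\alpha x,\alpha u)\in\gra A$ and $(\alpha x,\alpha v)\in\gra B$, so $(\alpha x,\alpha(u+v))\in\gra(A+B)$. For two elements $(x_1,u_1+v_1)$ and $(x_2,u_2+v_2)$ of $\gra(A+B)$, the same linearity yields $(x_1+x_2,u_1+u_2)\in\gra A$ and $(x_1+x_2,v_1+v_2)\in\gra B$, hence $(x_1+x_2,(u_1+v_1)+(u_2+v_2))\in\gra(A+B)$.

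There is no real obstacle here; the argument is purely a bookkeeping verification from the definitions of inverse relation, pointwise sum, and linear subspace. The only subtle point worth flagging is that $(A+B)x=Ax+Bx$ is a Minkowski sum of (possibly empty) subsets, so one must take care to produce the witnesses $u\in Ax$ and $v\in Bx$ independently before combining them; this is what justifies the set-theoretic description of $\gra(A+B)$ used above.
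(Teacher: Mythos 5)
Your proof is correct and complete. The paper does not prove this statement at all: it is recorded as a Fact with a citation to Cross's monograph \emph{Multivalued Linear Operators} (I.2.3 and I.4), so there is no in-paper argument to compare against. Your two verifications, that $\gra A^{-1}$ is the image of $\gra A$ under the linear swap map $(x,u)\mapsto(u,x)$, and that $\gra(A+B)$ is closed under addition and scalar multiplication once witnesses $u\in Ax$, $v\in Bx$ are chosen separately, are exactly the standard elementary arguments underlying the cited reference, and your closing remark about the Minkowski-sum subtlety is the right point to flag.
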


\section{Basic properties of the resolvent average}\label{basic} 
In this section we present several basic properties of the resolvent average. These will stand as the foundation of our entire discussion in the present paper and will be applied repeatedly.

\subsection{The inverse of the resolvent average}

We begin our discussion by recalling the following fact \cite[Theorem 5.1]{BGLW}: suppose that for each $i\in I$,  $f_i:\HH\to\RX$ is proper, lower semicontinuous and convex. Then 
\begin{equation}\label{proximal conjugate}
\big(p_\mu({\bf f},{\boldsymbol \lambda})\big)^*=p_{\mu^{-1}}({\bf f^*},{\boldsymbol \lambda}).
\end{equation}
Now for each $i\in I$ we set $A_i=\partial f_i$. Then $A_i^{-1}=\partial f_i^*$. Thus, recalling equation~\eqref{sub of proximal average}, we see that equation \eqref{proximal conjugate} turns into
\begin{equation}\label{proximal conjugate sub}
\big(\average\big)^{-1}=\partial \big(p_\mu({\bf f},{\boldsymbol \lambda})\big)^*=\partial p_{\mu^{-1}}({\bf f^*},{\boldsymbol \lambda})=\averageinverse,
\end{equation}
that is, we have an inversion  formula for the resolvent average in the case where we average subdifferential operators. We now aim at extending this result into our, more general, framework of the present paper. To this end, we will need the following general property of resolvents:
\begin{proposition}
Let $A:\HH\To\HH$. Then
\begin{align}
(A+\mu^{-1}\Id)^{-1}&=\big(\Id-\mu(A^{-1}+\mu\Id)^{-1}\big)\circ(\mu\Id)\label{mu resolvent identity}\\
&=\mu\big(\Id-(\mu^{-1}A^{-1}+\Id)^{-1}\big).\label{mu resolvent identity 2}
\end{align}
Consequently, if $B:\HH\To\HH$ is a mapping such that
\begin{equation}\label{mu resolvent identity character}
(A+\mu^{-1}\Id)^{-1}=\big(\Id-\mu(B+\mu\Id)^{-1}\big)\circ(\mu\Id),
\end{equation}
then $B=A^{-1}$.
\end{proposition}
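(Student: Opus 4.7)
The plan is to verify both identities by a direct graph-level manipulation, then derive the consequence by right-cancellation of $\mu\Id$.

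First I would establish~\eqref{mu resolvent identity} by chasing the definition. Fix $y\in\HH$ and suppose $x\in(A+\mu^{-1}\Id)^{-1}(y)$. Then $y\in Ax+\mu^{-1}x$, so setting $u=y-\mu^{-1}x$ gives $u\in Ax$, equivalently $x\in A^{-1}u$. Solving $u=y-\mu^{-1}x$ for $x$ yields $x=\mu(y-u)$, and substituting into $x\in A^{-1}u$ produces $\mu(y-u)\in A^{-1}u$, which rearranges to $\mu y\in A^{-1}u+\mu u=(A^{-1}+\mu\Id)u$, i.e.\ $u\in(A^{-1}+\mu\Id)^{-1}(\mu y)$. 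Hence
\[
x=\mu y-\mu u\ \in\ \mu y-\mu(A^{-1}+\mu\Id)^{-1}(\mu y)=\bigl(\Id-\mu(A^{-1}+\mu\Id)^{-1}\bigr)(\mu y).
\]
Every step above is a biconditional, so reversing them gives the opposite inclusion and therefore equality as set-valued mappings.

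Next I would derive~\eqref{mu resolvent identity 2} from~\eqref{mu resolvent identity} via the rescaling identity $(A^{-1}+\mu\Id)^{-1}(\mu y)=(\mu^{-1}A^{-1}+\Id)^{-1}(y)$. This follows from the chain
\[
v\in(A^{-1}+\mu\Id)^{-1}(\mu y)\iff \mu y\in A^{-1}v+\mu v\iff y\in\mu^{-1}A^{-1}v+v\iff v\in(\mu^{-1}A^{-1}+\Id)^{-1}(y).
\]
Multiplying by $\mu$ and inserting into~\eqref{mu resolvent identity} gives $(A+\mu^{-1}\Id)^{-1}(y)=\mu y-\mu(\mu^{-1}A^{-1}+\Id)^{-1}(y)=\mu\bigl(\Id-(\mu^{-1}A^{-1}+\Id)^{-1}\bigr)(y)$, which is~\eqref{mu resolvent identity 2}. (As a sanity check, one could alternatively note that $(A+\mu^{-1}\Id)^{-1}=J_{\mu A}\circ(\mu\Id)$ and then apply the resolvent identity~\eqref{must1} to $\mu A$, but the direct graph argument needs no external fact.)

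For the final implication, I would compose~\eqref{mu resolvent identity character} and~\eqref{mu resolvent identity} on the right with $\mu^{-1}\Id$ (i.e.\ replace $y$ by $\mu^{-1}z$). This removes the outer $\mu\Id$ and forces
\[
\Id-\mu(A^{-1}+\mu\Id)^{-1}=\Id-\mu(B+\mu\Id)^{-1}
\]
as set-valued mappings on $\HH$. Cancellation and division by $\mu$ give $(A^{-1}+\mu\Id)^{-1}=(B+\mu\Id)^{-1}$; inverting and subtracting $\mu\Id$ yields $B=A^{-1}$.

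The proof is entirely a bookkeeping exercise, and I do not expect a genuine obstacle; the one point to be careful about is that each step in the derivation of~\eqref{mu resolvent identity} is a logical equivalence, so equality (not just inclusion) of graphs is obtained, which is precisely what the cancellation argument in the consequence requires.
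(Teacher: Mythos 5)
Your proof is correct, but it follows a different route from the paper's. For the two identities, the paper argues at the level of operator algebra: it writes $(A+\mu^{-1}\Id)^{-1}=J_{\mu A}\circ(\mu\Id)$, invokes the cited resolvent identity~\eqref{must1} in the form $J_{\mu A}=\Id-J_{(\mu A)^{-1}}$, and then unwinds $(\mu A)^{-1}=A^{-1}\circ(\mu^{-1}\Id)$ through a chain of composition rules --- exactly the ``sanity check'' route you mention parenthetically and set aside. You instead verify~\eqref{mu resolvent identity} by element-wise graph chasing, which is self-contained (no appeal to the external fact) and has the virtue of making explicit that every step is a biconditional, hence that the two set-valued mappings have equal graphs; this is precisely the hypothesis your later cancellations need, whereas the paper leaves those set-valued bookkeeping points implicit. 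For the consequence, the two arguments are close cousins but not identical: the paper substitutes $B^{-1}$ for $A$ in~\eqref{mu resolvent identity} to recognize the right-hand side of~\eqref{mu resolvent identity character} as $(B^{-1}+\mu^{-1}\Id)^{-1}$, inverts once, and cancels the single-valued term $\mu^{-1}\Id$; you never introduce $B^{-1}$, but instead strip the composition with $\mu\Id$, cancel $\Id$, divide by $\mu$, invert, and cancel $\mu\Id$. Both hinge on the same two principles --- inversion is an involution on graphs, and Minkowski addition of a single-valued mapping can be cancelled pointwise --- so the difference is cosmetic there. In sum, your version trades the paper's brevity and its link to the known resolvent identity for a fully elementary, self-verifying argument; both are sound.
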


\begin{proof}
For any mapping $F:\HH\To\HH$ we have $(\mu F)^{-1}=F^{-1}\circ(\mu^{-1}\Id)$. Employing this fact, the fact that $A+\mu^{-1}\Id=\mu^{-1}(\mu A+\Id)$ and the resolvent identity \eqref{must1}, we obtain the following chain of equalities:
\begin{align*}
(A+\mu^{-1}\Id)^{-1}&=J_{\mu A}\circ(\mu\Id)=(\Id-J_{(\mu A)^{-1}})\circ(\mu\Id)=(\Id-J_{A^{-1}\circ(\mu^{-1}\Id)})\circ(\mu\Id)\\
&=\Big(\Id-\big(A^{-1}\circ(\mu^{-1}\Id)+\Id\big)^{-1}\Big)\circ(\mu\Id)\\
&=\Big(\Id-\big(A^{-1}\circ(\mu^{-1}\Id)+(\mu\Id)\circ(\mu^{-1}\Id)\big)^{-1}\Big)\circ(\mu\Id)\\
&=\Big(\Id-\big((A^{-1}+\mu\Id)\circ(\mu^{-1}\Id)\big)^{-1}\Big)\circ(\mu\Id)=\big(\Id-\mu(A^{-1}+\mu\Id)^{-1}\big)\circ(\mu\Id)\\
&=\mu\Id-\mu(\mu^{-1}A^{-1}+\Id)^{-1}\circ(\mu^{-1}\Id)\circ(\mu\Id)=\mu\big(\Id-(\mu^{-1}A^{-1}+\Id)^{-1}\big).
\end{align*}
This completes the proof of \eqref{mu resolvent identity} and \eqref{mu resolvent identity 2}. Now, suppose that $B:\HH\To\HH$ is a mapping which satisfies equation \eqref{mu resolvent identity character}. Then, by employing equation \eqref{mu resolvent identity} to the right hand side of equation \eqref{mu resolvent identity character}, we arrive at  $(A+\mu^{-1}\Id)^{-1}=(B^{-1}+\mu^{-1}\Id)^{-1}$, which implies that $A+\mu^{-1}\Id=B^{-1}+\mu^{-1}\Id$. Since $\mu^{-1}\Id$ is single-valued, we conclude that $B^{-1}=A$ and complete the proof.
\end{proof}

\begin{theorem}[the inversion formula]\label{mainresult}
Suppose that for each $i\in I$, $A_{i}:\HH \To \HH$ is a set-valued mapping. Then
\begin{equation}\label{niceselfdual}
\big(\average\big)^{-1}=\averageinverse.
\end{equation}
\end{theorem}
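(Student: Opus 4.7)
The strategy is to reduce the claim to a direct application of the characterization \eqref{mu resolvent identity character}, which says: if $(A+\mu^{-1}\Id)^{-1}=(\Id-\mu(B+\mu\Id)^{-1})\circ(\mu\Id)$, then $B=A^{-1}$. So it suffices to verify an identity of this shape relating $A:=\average$ and $B:=\averageinverse$.

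First, I unpack the two definitions. Directly from Definition~\ref{resolvent average def},
\[
(A+\mu^{-1}\Id)^{-1}=\sum_{i\in I}\lambda_i(A_i+\mu^{-1}\Id)^{-1},
\qquad
(B+\mu\Id)^{-1}=\sum_{i\in I}\lambda_i(A_i^{-1}+\mu\Id)^{-1}.
\]
Next, I apply identity \eqref{mu resolvent identity} coordinatewise, but with $A$ replaced by $A_i^{-1}$ and $\mu$ replaced by $\mu^{-1}$; this gives, for each $i\in I$,
\[
(A_i^{-1}+\mu\Id)^{-1}=\bigl(\Id-\mu^{-1}(A_i+\mu^{-1}\Id)^{-1}\bigr)\circ(\mu^{-1}\Id).
\]
Now I take the convex combination. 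Because $\mu^{-1}\Id$ is single-valued, the composition distributes over the sum, and because $\sum_i \lambda_i=1$, the identity parts combine into a single $\Id$; using the first formula above to collapse $\sum_i\lambda_i(A_i+\mu^{-1}\Id)^{-1}$ into $(A+\mu^{-1}\Id)^{-1}$, I obtain
\[
(B+\mu\Id)^{-1}=\bigl(\Id-\mu^{-1}(A+\mu^{-1}\Id)^{-1}\bigr)\circ(\mu^{-1}\Id).
\]

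Finally, I read this last equation as an instance of \eqref{mu resolvent identity character} with the roles swapped: replace $A$ by $B$ and $\mu$ by $\mu^{-1}$ in \eqref{mu resolvent identity character}; the hypothesis of that characterization, with the unknown operator played by $A$, is precisely the displayed equation. The conclusion is $A=B^{-1}$, equivalently $B=A^{-1}$, which is exactly \eqref{niceselfdual}. The main point requiring care is the bookkeeping of the compositions with $\mu\Id$ and $\mu^{-1}\Id$ and the symmetric role of $\mu$ and $\mu^{-1}$ in the characterization; no monotonicity or single-valuedness of the $A_i$ is needed, since \eqref{mu resolvent identity} and \eqref{mu resolvent identity character} hold for arbitrary set-valued mappings.
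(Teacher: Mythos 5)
Your proposal is correct and takes essentially the same approach as the paper: both proofs reduce the claim to the Proposition preceding the theorem, using identity \eqref{mu resolvent identity} coordinatewise to rewrite a sum of resolvents and then invoking the uniqueness characterization \eqref{mu resolvent identity character} to conclude. The only difference is cosmetic and stems from the symmetry of the statement: the paper expands the resolvent of $\average$ via \eqref{mu resolvent identity} applied to $(A_i,\mu)$ and applies \eqref{mu resolvent identity character} with parameter $\mu$, whereas you expand the resolvent of $\averageinverse$ via the identity applied to $(A_i^{-1},\mu^{-1})$ and apply the characterization with $\mu^{-1}$ and the roles of $A$ and $B$ interchanged.
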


\begin{proof}
By employing the definition (see~\eqref{ResAvgDef}) of $\average$, $\averageinverse$ in~\eqref{first application of ResDef},~\eqref{second application of ResDef} and by also employing equation~\eqref{mu resolvent identity} in~\eqref{application of mu resolvent identity} below, we obtain the following chain of equalities:

\begin{align}
\average&=\bigg(\sum_{i\in I}\lambda_i(A_i+\mu^{-1}\Id)^{-1}\bigg)^{-1}-\mu^{-1}\Id\label{first application of ResDef}\\
&=\bigg(\sum_{i\in I}\lambda_i\big(\Id-\mu(A_i^{-1}+\mu\Id)^{-1}\big)\circ(\mu\Id)\bigg)^{-1}-\mu^{-1}\Id\label{application of mu resolvent identity}\\
&=\bigg(\mu\Id-\mu\sum_{i\in I}\lambda_i(A_i^{-1}+\mu\Id)^{-1}\circ(\mu\Id)\bigg)^{-1}-\mu^{-1}\Id\nonumber\\
&=\bigg(\mu\Id-\mu\Big(\averageinverse+\mu\Id\Big)^{-1}\circ(\mu\Id)\bigg)^{-1}-\mu^{-1}\Id\label{second application of ResDef}
\end{align}
which, in turn, implies 
$$
(\average+\mu^{-1}\Id)^{-1}=\big(\Id-\mu[\averageinverse+\mu\Id]^{-1}\big)\circ(\mu\Id).
$$
Finally, letting $A=\average$ and $B=\averageinverse$, we may now apply characterization~\eqref{mu resolvent identity character} in order to obtain $A^{-1}=B$ and completes the proof.
\end{proof}

We see that, indeed, Theorem~\ref{mainresult} extends the reach of formula~\eqref{proximal conjugate sub}. Consequently, since the subdifferential of a proper, lower semicontinuous and convex function determines its antiderivative uniquely up to an additive constant, we note that, in fact, Theorem~\ref{mainresult} recovers formula~\eqref{proximal conjugate} up to an additive constant.

\subsection{Basic properties of $\average$ and common solutions to monotone inclusions}

\begin{proposition}\label{basic properties}
Suppose that for each $i\in I$, $A_i:\HH\rightrightarrows\HH$ is maximally monotone.

\begin{enumerate}
\item 
Let $x$ and $u$ be points in $\HH$. Then
\begin{equation}\label{average of shift}
\mathcal{R}_\mu((A_1-u,\dots,A_n-u),\boldsymbol\lambda)=\average-u
\end{equation}
and
\begin{equation}\label{average of shifted argument}
\mathcal{R}_\mu\Big(\big(A_1\big((\cdot)-x\big),\dots,A_n\big((\cdot)-x\big)\big),\boldsymbol\lambda\Big)=\average\big((\cdot)-x)\big).
\end{equation}
\item
Let $0<\alpha$. Then
\begin{equation}\label{scaled average}
\mathcal{R}_\mu(\alpha\bold{A},\boldsymbol\lambda)=\alpha\mathcal{R}_{\alpha\mu}(\bold{A},\boldsymbol\lambda)\ \ \ \text{in particular,}\ \ \ \ \average=\mu^{-1}\mathcal{R}(\mu\bold{A},\boldsymbol\lambda).
\end{equation}
\item
Let $A:\HH\rightrightarrows\HH$ be maximally monotone and suppose that for each $i\in I$ , $A_i=A$. Then 
\begin{equation}\label{same mapping average} 
\average=A.
\end{equation}
\end{enumerate}
\end{proposition}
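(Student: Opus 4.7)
The plan is to verify each of the three identities by unfolding Definition~\ref{resolvent average def} and performing elementary set-valued algebra. No deep machinery is needed; the main ingredients are two bookkeeping identities about how inversion interacts with translation and scaling. Maximal monotonicity only plays a role in guaranteeing that the resolvents involved are well behaved (cf.\ Fact~\ref{f:Minty}); the algebraic manipulations themselves are valid at the level of graphs.

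For part (iii), I would start with this simplest case, since it just says the resolvent average is idempotent on constant tuples. When every $A_i=A$, the inner sum in~\eqref{ResAvgDef} collapses via $\sum_{i\in I}\lambda_i=1$ to $(A+\mu^{-1}\Id)^{-1}$, whose inverse is $A+\mu^{-1}\Id$, so subtracting $\mu^{-1}\Id$ gives $A$. This immediately yields~\eqref{same mapping average}.

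For part (i), the engine is the identity
\begin{equation*}
\bigl((A_i-u)+\mu^{-1}\Id\bigr)^{-1}(v)=(A_i+\mu^{-1}\Id)^{-1}(v+u),
\end{equation*}
obtained by noting $w\in A_i x-u+\mu^{-1}x$ iff $w+u\in A_ix+\mu^{-1}x$. Summing over $i$ with weights $\lambda_i$ shows that the inner sum for $\mathcal{R}_\mu((A_i-u)_i,\boldsymbol\lambda)$ is the translate $S(\cdot+u)$ of the inner sum $S$ for $\mathcal{R}_\mu(\bold A,\boldsymbol\lambda)$; inverting a translate gives $S^{-1}(\cdot)-u$, and then subtracting $\mu^{-1}\Id$ yields~\eqref{average of shift}. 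The identity~\eqref{average of shifted argument} is essentially dual: writing $B_i:=A_i(\cdot-x)$ and solving $w\in B_iy+\mu^{-1}y$ for $y$ gives
\begin{equation*}
(B_i+\mu^{-1}\Id)^{-1}(w)=(A_i+\mu^{-1}\Id)^{-1}(w-\mu^{-1}x)+x,
\end{equation*}
and after summing, inverting and subtracting $\mu^{-1}\Id$, the constant $x$ and the shift $-\mu^{-1}x$ combine to produce exactly $\mathcal{R}_\mu(\bold A,\boldsymbol\lambda)(\cdot-x)$.

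For part (ii), the key identity is
\begin{equation*}
(\alpha A_i+\mu^{-1}\Id)^{-1}=(A_i+(\alpha\mu)^{-1}\Id)^{-1}\circ(\alpha^{-1}\Id),
\end{equation*}
which follows from factoring $\alpha A_i+\mu^{-1}\Id=\alpha(A_i+(\alpha\mu)^{-1}\Id)$ and using $(\alpha F)^{-1}=F^{-1}\circ(\alpha^{-1}\Id)$. After summing with weights $\lambda_i$, the composition with $\alpha^{-1}\Id$ factors out, and inversion of the resulting composition produces an outer factor of $\alpha$. Subtracting $\mu^{-1}\Id=\alpha\cdot(\alpha\mu)^{-1}\Id$ then regroups to $\alpha\bigl(\sum_i\lambda_i(A_i+(\alpha\mu)^{-1}\Id)^{-1}\bigr)^{-1}-\alpha(\alpha\mu)^{-1}\Id=\alpha\,\mathcal{R}_{\alpha\mu}(\bold A,\boldsymbol\lambda)$, which is~\eqref{scaled average}; the particular case $\alpha=\mu^{-1}$ yields the stated equality $\average=\mu^{-1}\mathcal{R}(\mu\bold A,\boldsymbol\lambda)$. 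I do not anticipate any real obstacle here; the only thing to be mildly careful about is keeping track of the order of composition with $\alpha^{-1}\Id$ versus $\alpha\Id$ when inverting, and the single-valuedness of the scaling/translation maps (which ensures these compositions behave as expected on graphs).
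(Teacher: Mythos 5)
Your proposal is correct and takes essentially the same route as the paper: both arguments simply unfold Definition~\ref{resolvent average def} and rest on the same elementary translation/scaling identities for set-valued inverses (your $\bigl((A_i-u)+\mu^{-1}\Id\bigr)^{-1}(v)=(A_i+\mu^{-1}\Id)^{-1}(v+u)$ is exactly the paper's $(B-u)^{-1}=B^{-1}((\cdot)+u)$, and your collapse in (iii) relies, as the paper's does, on the single-valuedness of the resolvents guaranteed by maximal monotonicity). The only cosmetic difference is packaging: the paper runs the computation at the level of resolvents via \eqref{resolventidentity}, e.g.\ $J_{\mu(A_i-u)}=J_{\mu A_i}((\cdot)+\mu u)$, and then reads off the operators, whereas you invert the inner sums of \eqref{ResAvgDef} directly; the bookkeeping is identical.
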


\begin{proof}
 (i): For a mapping $B:\HH\rightrightarrows\HH$ we have $(B-u)^{-1}=B^{-1}((\cdot)+u)$. Thus, 
$$
J_{\mu\mathcal{R}_\mu((A_1-u,\dots,A_n-u),\boldsymbol\lambda)}=\sum_{i\in I}\lambda_iJ_{\mu(A_i-u)}=\sum_{i\in I}\lambda_i J_{\mu A_i}((\cdot)+\mu u)=J_{\mu\average}((\cdot)+\mu u).
$$
Consequently, \eqref{average of shift} follows. A similar argument also implies equation~\eqref{average of shifted argument}. (ii): Since $J_{\mu\mathcal{R}_\mu(\alpha\bold{A},\boldsymbol\lambda)}=\sum_{i\in I}\lambda_i J_{\mu\alpha A_i}=J_{\mu\alpha\mathcal{R}_{\mu\alpha}(\bold{A},\boldsymbol\lambda)} $, equation~\eqref{scaled average} follows.
(iii): Since $J_{\mu\average}=\sum_{i\in I}\lambda_i J_{\mu A_i}=J_{\mu A}$, we obtain $\average=A$.
\end{proof}

\begin{remark}
Suppose that for each $i\in I$, $f_i:\HH\to\RX$ is proper, lower semicontinuous and convex and we set $A_i=\partial f_i$. In this particular case formula~\eqref{scaled average} can be obtained by subdifferentiating the following formula \cite[Remark 4.2(iv)]{BGLW}:
\begin{equation}\label{scaled proximal average}
p_\mu({\bf f},{\boldsymbol \lambda})=\mu^{-1}p({\mu\bf
f},{\boldsymbol \lambda}). 
\end{equation}
\end{remark}

A strong motivation for studying the resolvent average stems from the fact that it also captures common solutions to monotone inclusions: 

\begin{theorem}[common solutions to monotone inclusions]
Suppose that for each $i\in I$, $A_i:\HH\rightrightarrows\HH$ is maximally monotone. Let $x$ and $u$ be points in $\HH$. If $\bigcap_{i\in I}A_i(x)\neq\varnothing$, then
\begin{equation}\label{common solution}
\average(x)=\bigcap_{i\in I}A_i(x).
\end{equation}
If $\bigcap_{i\in I}A_i^{-1}(u)\neq\varnothing$, then
\begin{equation}\label{common inverse solution}
\average^{-1}(u)=\bigcap_{i\in I}A_i^{-1}(u).
\end{equation}
\end{theorem}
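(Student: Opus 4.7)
The plan is to translate the inclusion $v\in A_i(x)$ into the fixed-point statement $J_{\mu A_i}(x+\mu v)=x$, using the identification provided by Minty's parametrization (Fact~\ref{MintyThm}, Fact~\ref{f:Minty}), and then exploit the identity $J_{\mu\average}=\sum_{i\in I}\lambda_iJ_{\mu A_i}$ from \eqref{resolventidentity}.

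For the first inclusion in \eqref{common solution}, I would first prove the easy direction $\bigcap_{i\in I}A_i(x)\subseteq\average(x)$. Given $v\in\bigcap_{i\in I}A_i(x)$, for each $i\in I$ we have $x=J_{\mu A_i}(x+\mu v)$, so averaging yields $J_{\mu\average}(x+\mu v)=\sum_{i\in I}\lambda_i x=x$, which means $v\in\average(x)$. This direction does \emph{not} need the hypothesis that the intersection is nonempty.

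The main obstacle is the reverse inclusion $\average(x)\subseteq\bigcap_{i\in I}A_i(x)$. Assume $w\in\average(x)$ and pick any $v\in\bigcap_{i\in I}A_i(x)$ (which exists by hypothesis). Set $y_i=J_{\mu A_i}(x+\mu w)$, so that $\sum_{i\in I}\lambda_iy_i=x$ by \eqref{resolventidentity}, and $J_{\mu A_i}(x+\mu v)=x$ for every $i\in I$ by the easy direction. Applying firm nonexpansiveness of each $J_{\mu A_i}$ (Fact~\ref{f:firm}\eqref{f:firm-inequal}) to the pair $(x+\mu w,\,x+\mu v)$ gives
\begin{equation*}
\|y_i-x\|^2\leq\mu\scal{w-v}{y_i-x},
\end{equation*}
and summing with weights $\lambda_i$ produces
\begin{equation*}
\sum_{i\in I}\lambda_i\|y_i-x\|^2\leq\mu\Scal{w-v}{\sum_{i\in I}\lambda_i(y_i-x)}=0.
\end{equation*}
Hence $y_i=x$ for every $i\in I$, that is, $w\in A_i(x)$ for every $i\in I$, as required.

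For the second assertion \eqref{common inverse solution}, I would invoke the inversion formula of Theorem~\ref{mainresult}: since each $A_i^{-1}$ is maximally monotone and $\bigcap_{i\in I}A_i^{-1}(u)\neq\varnothing$, the already-established \eqref{common solution} applied to $(A_1^{-1},\dots,A_n^{-1})$ with parameter $\mu^{-1}$ and the point $u$ gives $\mathcal{R}_{\mu^{-1}}(\bold{A^{-1}},\boldsymbol\lambda)(u)=\bigcap_{i\in I}A_i^{-1}(u)$, and by \eqref{niceselfdual} the left-hand side coincides with $\average^{-1}(u)$.
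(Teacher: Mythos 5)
Your proof is correct, and it takes a genuinely different route from the paper's. The paper works in the opposite order: it first establishes \eqref{common inverse solution}, beginning with the case $u=0$, where the hypothesis $\bigcap_{i\in I}A_i^{-1}(0)\neq\varnothing$ says the resolvents $J_{\mu A_i}$ have a common fixed point; it then invokes Corollary~\ref{equality in convex combination of firmly nonexpansive}\ref{fixed points of average of firmly nonexpansive mappings} (Reich's lemma) to conclude $\Fix J_{\mu\average}=\bigcap_{i\in I}\Fix J_{\mu A_i}$, passes to arbitrary $u$ via the shift identity \eqref{average of shift}, and finally deduces \eqref{common solution} from \eqref{common inverse solution} through the inversion formula \eqref{niceselfdual}. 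You instead prove \eqref{common solution} directly at the point $x$: the easy inclusion by averaging the identities $J_{\mu A_i}(x+\mu v)=x$, and the hard inclusion by summing the firm-nonexpansiveness inequalities $\|y_i-x\|^2\leq\mu\scal{w-v}{y_i-x}$ with weights $\lambda_i$, so that the right-hand side vanishes because $\sum_{i\in I}\lambda_i(y_i-x)=0$; then \eqref{common inverse solution} follows from Theorem~\ref{mainresult} exactly as in your last step. In substance, your weighted-summation argument is a self-contained, pointwise re-derivation of precisely the special case of Reich's lemma that the paper cites (the paper's Corollary~\ref{equality in convex combination of firmly nonexpansive} rests on the same identity \eqref{normstrongconvex}), so the two proofs share the same geometric mechanism; what differs is the packaging. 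The paper's route reuses machinery it has already established and needs elsewhere (the fixed-point lemma and the shift formula), at the cost of a detour through $u=0$; yours is more elementary and local, needing only firm nonexpansiveness, the identity \eqref{resolventidentity}, and the inversion formula, and it makes transparent why the existence of one common element forces every resolvent value $y_i$ to collapse to $x$.
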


\begin{proof}
First we prove that if $\bigcap_{i\in I}A_i^{-1}(0)\neq\varnothing$ then $\average^{-1}(0)=\bigcap_{i\in I}A_i^{-1}(0)$. Our assumption that $\bigcap_{i\in I}A_i^{-1}(0)\neq\varnothing$ means that $\bigcap_{i\in I}\Fix J_{A_i}\neq\varnothing$. Since for each $i\in I$, $J_{A_i}$ is nonexpansive, Corollary~\ref{equality in convex combination of firmly nonexpansive}\ref{fixed points of average of firmly nonexpansive mappings} guarantees that
$$
\Fix J_{\mu\average}=\Fix\sum_{i\in I}\lambda_i J_{\mu A_i}=\bigcap_{i\in I}\Fix J_{\mu A_i}
$$
which implies that $\big(\mu\average\big)^{-1}(0)=\bigcap_{i\in I}(\mu A_i^{-1})(0)$ and, consequently, that $\average^{-1}(0)=\bigcap_{i\in I}A_i^{-1}(0)$. Now, let $u\in\HH$. Given a mapping $A:\HH\rightrightarrows\HH$, then $A^{-1}(u)=(A-u)^{-1}(0)$.  Consequently, if $\bigcap_{i\in I}A_i^{-1}(u)\neq\varnothing$, then $\bigcap_{i\in I}(A_i-u)^{-1}(0)\neq\varnothing$. By employing equation~\eqref{average of shift} we obtain
\begin{align*}
\average^{-1}(u)&=(\average-u)^{-1}(0)=\mathcal{R}_\mu((A_1-u,\dots,A_n-u),\boldsymbol\lambda)^{-1}(0)\\
&=\bigcap_{i\in I}(A_i-u)^{-1}(0)
=\bigcap_{i\in I}A_i^{-1}(u)
\end{align*}
which completes the proof of equation~\eqref{common inverse solution}. Let $x\in\HH$. If $\bigcap_{i\in I}A_i(x)\neq\varnothing$, then by employing Theorem~\ref{mainresult} we now obtain
$$
\average(x)=\big(\average^{-1}\big)^{-1}(x)=\big(\averageinverse\big)^{-1}(x)=\bigcap_{i\in I}(A_i^{-1})^{-1}(x)=\bigcap_{i\in I}A_i(x)
$$
which completes the proof of equation~\eqref{common solution}.
\end{proof}

\begin{example} 
 (\textbf{convex feasibility problem}) Suppose that for each $i\in I$, $C_i$ is a nonempty, closed and convex subset of $\HH$  and set $A_i=N_{C_i}$. If $\bigcap_{i\in I}C_i\neq\varnothing$, then
$$
\average^{-1}(0)=\bigcap_{i\in I}C_i.
$$
\end{example}

\subsection{Monotonicity, domain, range and the graph of the resolvent average}

We continue our presentation of general properties of the resolvent average by focusing our attention on monotone operators.

\begin{theorem}\label{neededlater}
Suppose that for each $i\in I$, $A_i:\HH\rightrightarrows\HH$ is a set-valued mapping. Then $\average$ is maximally monotone if and only if for each $i\in I$, $A_i$ is maximally monotone. In this case
\begin{equation}\label{resolvents graph}
\gr \mu \average \subseteq \sum_{i\in I}\lambda_{i}\gr \mu A_{i},
\end{equation}
and, consequently,
\begin{equation}\label{domain and range containment}
\ran \average\subseteq\sum_{i\in I}\lambda_{i}\ran A_{i}\ \ \ \ \text{and}\ \ \ \ \dom\average\subseteq\sum_{i\in I}\lambda_{i}\dom A_{i}.
\end{equation}
\end{theorem}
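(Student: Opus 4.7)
My plan is to exploit the identity $J_{\mu\average}=\sum_{i\in I}\lambda_{i}J_{\mu A_{i}}$ that is baked into Definition~\ref{resolvent average def} via~\eqref{resolventidentity}, in tandem with the Minty-style characterization (Fact~\ref{f:Minty}) of maximal monotonicity as firm nonexpansiveness of the resolvent. The structural bridge making the iff run is Corollary~\ref{average of firmly}, which propagates firm nonexpansiveness through convex combinations.

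For the forward direction (each $A_i$ maximally monotone implies $\average$ maximally monotone), the second item of Fact~\ref{f:Minty} yields that every $J_{\mu A_i}$ is single-valued, firmly nonexpansive, and has full domain; Corollary~\ref{average of firmly} then delivers firm nonexpansiveness of $T:=\sum_{i\in I}\lambda_i J_{\mu A_i}$, which equals $J_{\mu\average}$ by the resolvent identity~\eqref{resolventidentity}. Applying Fact~\ref{f:Minty}\ref{f:Mintyi} to $T$, the operator $T^{-1}-\Id$ is maximally monotone, and a short direct manipulation of~\eqref{ResAvgDef} (using $(\mu F)^{-1}=F^{-1}\circ(\mu^{-1}\Id)$) identifies $T^{-1}-\Id$ with $\mu\average$. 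Since positive scaling preserves maximal monotonicity, $\average$ itself is maximally monotone.

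For the converse, I would start from $J_{\mu\average}$ being single-valued, firmly nonexpansive and full-domain. Interpreting $\sum_{i\in I}\lambda_i J_{\mu A_i}(x)=J_{\mu\average}(x)$ as a Minkowski sum at every $x\in\HH$ forces each $J_{\mu A_i}(x)$ to be nonempty (hence $\ran(\Id+\mu A_i)=\HH$) and a singleton (since a Minkowski sum of nonempty sets is a singleton iff each summand is). Minty's theorem (Fact~\ref{MintyThm}) then yields maximal monotonicity of each $A_i$ once monotonicity of $A_i$ is in hand; this monotonicity step is the main obstacle, and I would attempt it by feeding the norm identity~\eqref{normstrongconvex} into the firm nonexpansiveness inequality Fact~\ref{f:firm}\ref{f:firm-inequal} for $J_{\mu\average}$ to extract firm nonexpansiveness of each $J_{\mu A_i}$ separately, after which Fact~\ref{f:Minty}\ref{f:Mintyi} closes the argument.

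Finally, for the graph inclusion~\eqref{resolvents graph}, I would unpack the definition: $(x,u)\in\gra\average$ is equivalent to $u+\mu^{-1}x\in(\average+\mu^{-1}\Id)(x)$, i.e.\ $x\in(\average+\mu^{-1}\Id)^{-1}(u+\mu^{-1}x)=\sum_{i\in I}\lambda_i(A_i+\mu^{-1}\Id)^{-1}(u+\mu^{-1}x)$. Picking $y_i\in(A_i+\mu^{-1}\Id)^{-1}(u+\mu^{-1}x)$ with $x=\sum_{i\in I}\lambda_i y_i$ and setting $u_i:=u+\mu^{-1}(x-y_i)$ gives $(y_i,u_i)\in\gra A_i$, and the relation $\sum_{i\in I}\lambda_i y_i=x$ forces $\sum_{i\in I}\lambda_i u_i=u$. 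Hence $(x,u)\in\sum_{i\in I}\lambda_i\gra A_i$; rescaling the second coordinate by $\mu$ produces~\eqref{resolvents graph}, and projecting~\eqref{resolvents graph} onto the first and second coordinates yields the containments in~\eqref{domain and range containment}.
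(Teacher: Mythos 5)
Your proof of the forward implication and of the graph inclusion \eqref{resolvents graph} is correct, and both parts in fact differ from (and slightly improve on) the paper: for \eqref{resolvents graph} the paper reads the inclusion off Minty's parametrization \eqref{Minty's parametrization} applied to $\mu\average$, so it needs maximal monotonicity, whereas your direct unpacking of \eqref{ResAvgDef} is elementary and valid for arbitrary mappings; and in the forward direction the paper deduces maximality of $\mu\average$ from $\ran(\Id+\mu\average)=\HH$ via Minty's theorem without verifying that $\mu\average$ is monotone, a point your route through Corollary~\ref{average of firmly} and Fact~\ref{f:Minty}\ref{f:Mintyi} handles cleanly.

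The genuine gap is in the converse, exactly at the step you flagged as the main obstacle --- and that step cannot be carried out. Firm nonexpansiveness of the convex combination $\sum_{i\in I}\lambda_i J_{\mu A_i}$ does not pass to the individual summands: on $\HH=\RR$, taking $T_1=2\Id$ and $T_2=-\Id$ gives $\tfrac{1}{2}T_1+\tfrac{1}{2}T_2=\tfrac{1}{2}\Id$, which is firmly nonexpansive although $T_1$ is not even nonexpansive. Translated to operators, this shows that for genuinely arbitrary set-valued $A_i$ the ``only if'' implication is simply false: with $A_1=-\tfrac{1}{2}\Id$, $A_2=-2\Id$, $\lambda_1=\lambda_2=\tfrac{1}{2}$ and $\mu=1$, one computes $J_{A_1}=2\Id$ and $J_{A_2}=-\Id$, hence $\averageonelambda=\bigl(\tfrac{1}{2}\Id\bigr)^{-1}-\Id=\Id$ is maximally monotone while neither $A_1$ nor $A_2$ is monotone. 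So no manipulation of \eqref{normstrongconvex} together with Fact~\ref{f:firm}\ref{f:firm-inequal} can extract firm nonexpansiveness of each $J_{\mu A_i}$ from that of their average; the information is genuinely not there.

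What makes the converse (and the paper's proof of it) work is an implicit standing hypothesis that each $A_i$ is monotone: the paper's argument consists of applying Minty's theorem (Fact~\ref{MintyThm}) to each $A_i$, and monotonicity is a hypothesis of that fact. Under that reading nothing needs to be extracted from the average, because monotonicity of $A_i$ already makes $J_{\mu A_i}$ single-valued and firmly nonexpansive on its domain; the only thing to transfer is the range condition, and that is precisely your (correct) Minkowski-sum observation: $\HH=\dom J_{\mu\average}=\bigcap_{i\in I}\dom J_{\mu A_i}$ forces $\ran(\Id+\mu A_i)=\HH$, and Minty's theorem then gives maximality of each $A_i$. If you add monotonicity of the $A_i$ to your hypotheses, your converse reduces to this observation and the rest of your proposal stands as written.
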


\begin{proof}
By employing equation~\eqref{resolventidentity} we see that 
\begin{equation}\label{montreal}
\dom J_{\mu\average}= \bigcap_{i\in I}\dom J_{\mu A_{i}}.
\end{equation}
As a consequence, we see that $\ran(\Id+\mu\average)=\HH$ if and only if for each $i\in I$, $\ran(\Id+\mu A_i)=\HH$. Recalling Minty's Theorem (Fact~\ref{MintyThm}), we see that $\mu\average$ is maximally monotone if and only if for each $i\in I$, $\mu A_i$ is maximally monotone, that is, $\average$ is maximally monotone if and only if for each $i\in I$, $A_i$ is maximally monotone.
Finally, applying Minty's parametrization \eqref{Minty's parametrization} we obtain
\begin{align*}
\gra \mu\average &=\Big\{(J_{\mu \average}(x),\  x-J_{\mu \average}x)\ \Big|\ x\in\dom J_{\mu\average}\Big\}\\
&= \bigg\{\Big(\sum_{i\in I}\lambda_i J_{\mu A_{i}}x,\ \sum_{i\in I}\lambda_i(\Id -J_{\mu A_{i}})x\Big)\ \bigg|\ x \in \bigcap_{i\in I}\dom J_{\mu A_{i}}\bigg\}\\
&=\bigg\{\sum_{i\in I}\lambda_i\big( J_{\mu A_{i}}x,\ (\Id -J_{\mu A_{i}})x\big)\ \bigg|\ x \in \bigcap_{i\in I}\dom J_{\mu A_{i}}\bigg\}\ \subseteq\ \sum_{i\in I} \lambda_i \gra \mu A_i.
\end{align*}
which implies inclusion~\eqref{resolvents graph}.
\end{proof}

As an example, we now discuss the case where we average a monotone mapping with its inverse. It was observed in \cite[Example 5.3]{BGLW} that when $f:\HH\to\RX$ is proper, lower semicontinuous and convex, then 
\begin{equation}\label{proximal average of function with its conjugate}
p(f,f^*)=q. 
\end{equation}
Employing equation~\eqref{sub of proximal average} we see that 
$$
\mathcal{R}(\partial f,\partial f^*)=\partial p(f,f^*)=\partial q=\Id.
$$
We now extend the reach of this fact in order for it to hold in the framework of the resolvent average of monotone operators. To this end we first note the following fact:

\begin{proposition}\label{self dual is I}
Let $R:\HH\rightrightarrows\HH$ be a set-valued and monotone mapping such that $R=R^{-1}$. Then $
\gr R\subseteq \gr\Id$. Consequently, if $R$ is maximally monotone, then $R=\Id$.
\end{proposition}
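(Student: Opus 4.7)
The plan is to exploit the symmetry $R = R^{-1}$ directly against the monotonicity inequality. Take an arbitrary pair $(x,u) \in \gr R$. Since $R = R^{-1}$, swapping the coordinates yields $(u,x) \in \gr R$ as well. I now apply the monotonicity inequality to the two points $(x,u)$ and $(u,x)$ in $\gr R$:
\[
0 \;\leq\; \scal{x-u}{u-x} \;=\; -\|x-u\|^2,
\]
which forces $x = u$. Hence $(x,u) = (x,x) \in \gr \Id$, establishing $\gr R \subseteq \gr \Id$.

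For the second assertion, I would note that $\Id$ is monotone (indeed maximally monotone, being the subdifferential of $q$). Thus $\Id$ is a monotone mapping whose graph contains $\gr R$. If $R$ is maximally monotone then no proper monotone extension exists, so the inclusion $\gr R \subseteq \gr \Id$ must be an equality, giving $R = \Id$.

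The argument is essentially a one-line computation; there is no real obstacle. The only thing to double-check is that we are allowed to apply monotonicity with the two pairs $(x,u)$ and $(u,x)$ even when $x = u$ (in which case the inequality is trivial) or when these are the same pair (again trivial), so the reasoning is uniformly valid.
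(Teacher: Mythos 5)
Your proof is correct and is essentially identical to the paper's own argument: both apply monotonicity to the pair $(x,u)$ and its swap $(u,x)$ to get $0\leq -\|x-u\|^2$, and both conclude $R=\Id$ from maximality since $\Id$ is a monotone mapping whose graph contains $\gr R$. Your explicit handling of the second assertion (which the paper leaves implicit) is a welcome but minor addition.
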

\begin{proof}
Suppose that $(x,u)\in\gr R$, then we also have $(u,x)\in\gr R$. Because of the monotonicity of $R$ we now have $0\leq\scal{x-u}{u-x}=-\|x-u\|^2\leq 0$, that is, $x=u$. 
\end{proof}

\begin{corollary}\label{resolvent of mapping with inverse}
The mapping $A:\HH\rightrightarrows \HH$ is maximally monotone if and only if $\mathcal{R}(A,A^{-1})=\Id$.
\end{corollary}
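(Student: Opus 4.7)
My plan is to derive both implications by combining the inversion formula (Theorem~\ref{mainresult}), the characterization of maximal monotonicity of the resolvent average (Theorem~\ref{neededlater}), and the fact that a maximally monotone self-inverse relation must equal $\Id$ (Proposition~\ref{self dual is I}). The observation that makes everything run is that $\mathcal{R}(A,A^{-1})$, being an equally-weighted average with $\mu=1$, is \emph{symmetric} in its two arguments.

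For the forward direction, I would first note that if $A$ is maximally monotone then so is $A^{-1}$, since swapping coordinates preserves both the monotonicity inequality and the maximality of the graph. Theorem~\ref{neededlater} then guarantees that $\mathcal{R}(A,A^{-1})$ is maximally monotone. Next, I would apply Theorem~\ref{mainresult} with $n=2$, $\mu=1$, and $\lambda_1=\lambda_2=\tfrac{1}{2}$ to obtain
\begin{equation*}
\bigl(\mathcal{R}(A,A^{-1})\bigr)^{-1}=\mathcal{R}(A^{-1},A)=\mathcal{R}(A,A^{-1}),
\end{equation*}
where the second equality holds because the two arguments enter symmetrically in the definition of $\mathcal{R}$ when the weights coincide. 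Thus $R:=\mathcal{R}(A,A^{-1})$ is a maximally monotone relation satisfying $R=R^{-1}$, and Proposition~\ref{self dual is I} forces $R=\Id$.

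For the reverse direction, suppose $\mathcal{R}(A,A^{-1})=\Id$. Since $\Id$ is maximally monotone, Theorem~\ref{neededlater} (applied in its ``if and only if'' form) tells us that every component mapping being averaged is maximally monotone; in particular $A$ is maximally monotone. This finishes the equivalence.

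I do not foresee any genuine obstacle here: the statement is essentially an immediate corollary once one notes that the inversion formula in Theorem~\ref{mainresult} turns $\mathcal{R}(A,A^{-1})$ into a self-inverse object, at which point Proposition~\ref{self dual is I} does all the work. The only minor care needed is to verify that $A^{-1}$ is maximally monotone whenever $A$ is, and to invoke the ``only if'' half of Theorem~\ref{neededlater} for the converse direction.
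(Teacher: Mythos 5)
Your proposal is correct and follows essentially the same route as the paper's own proof: both directions rest on exactly the same three ingredients (Theorem~\ref{mainresult} for the inversion, Theorem~\ref{neededlater} for maximal monotonicity in both directions, and Proposition~\ref{self dual is I} to conclude $\mathcal{R}(A,A^{-1})=\Id$). Your explicit remarks that $A^{-1}$ inherits maximal monotonicity and that equal weights make $\mathcal{R}(A^{-1},A)=\mathcal{R}(A,A^{-1})$ are details the paper leaves implicit, but they do not constitute a different argument.
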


\begin{proof}
If $\mathcal{R}(A,A^{-1})=\Id$, then since $\Id$ is maximally monotone, it is clear from Theorem~\ref{neededlater} that $A$ is maximally monotone. Conversely, suppose that the mapping $A:\HH\rightrightarrows\HH$ is maximally monotone. Then Theorem~\ref{mainresult} guarantees that $\big(\mathcal{R}(A,A^{-1})\big)^{-1}=\mathcal{R}(A^{-1},A)=\mathcal{R}(A,A^{-1})$ while Theorem~\ref{neededlater} guarantees that $\mathcal{R}(A,A^{-1})$ is maximally monotone. Consequently, Proposition~\ref{self dual is I} implies that $\mathcal{R}(A,A^{-1})=\Id$.
\end{proof}

Now we address domain and range properties of the resolvent average. A natural question is whether more precise Formulae than Formulae~\eqref{domain and range containment} can be attained. A precise formula for the domain of the proximal average of functions is \cite[Theorem 4.6]{BGLW}. It asserts that given proper, convex and lower semicontinuous functions $f_i:\HH\to\RX,\ i\in I$, then
$$
\dom p_\mu({\bf f},{\boldsymbol \lambda})=\sum_{i\in I}\lambda_i\dom f_i.
$$
As we shall see shortly, such precision does not hold in general for the resolvent average. However, we now aim at obtaining \emph{nearly} precise formulae for the domain and range of the resolvent average. To this end, we first recall that the \emph{relative interior} of a subset $C$ of $\HH$, which is denoted by $\reli C$, is the subset of $\HH$ which is obtained by taking the interior of $C$ when considered a subset of its closed affine hull. We say that the two subsets $C$ and $D$ of $\HH$ are \emph{nearly equal} if $\overline{C}=\overline{D}$ and $\reli C=\reli D$. In this case we write $C\simeq D$. In order to prove our near precise domain and range formulae we will employ the following fact which is an extension (to the case where we sum arbitrary finitely many rectangular mappings) of the classical and far reaching result of Brezis and Haraux \cite{BH} regarding the range of the sum of two rectangular mappings:
\begin{fact}[Pennanen]{\rm \cite[Corollary~6]{Pen}}\label{Pennanen}
Suppose that for each $i\in I$, $A_i:\HH\rightrightarrows\HH$ is monotone and rectangular. If $A=\sum_{i\in I}A_i$ is maximally monotone, then
\begin{equation}\label{BH near equality}
\ran\sum_{i\in I}A_i\simeq\sum_{i\in I}\ran{A_i}.
\end{equation}
\end{fact}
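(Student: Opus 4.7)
The plan is to verify the two defining conditions of $\simeq$: that $\overline{\ran A}=\overline{\sum_{i\in I}\ran A_i}$ and $\reli \ran A=\reli \sum_{i\in I}\ran A_i$, where I write $A:=\sum_{i\in I}A_i$. One direction is immediate from the pointwise definition of the sum: every $w\in A(x)$ decomposes as $w=\sum_i w_i$ with $w_i\in A_i(x)$, so $\ran A\subseteq\sum_i\ran A_i$, and passing to closures and to relative interiors is then free. The substance lies in showing that every $y\in\sum_{i\in I}\ran A_i$ can be approximated by elements of $\ran A$ (closure part) and that interior points transfer (relative-interior part).

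For the closure statement, I would fix $y=\sum_{i\in I}y_i$ with each $y_i\in A_i(u_i)$ and apply a Yosida-type regularisation. Since $A$ is maximally monotone, so is $A+\varepsilon\Id$ for every $\varepsilon>0$, and Fact~\ref{MintyThm} produces $x_\varepsilon\in\HH$ with $y-\varepsilon x_\varepsilon\in A(x_\varepsilon)$; write $y-\varepsilon x_\varepsilon=\sum_{i\in I} w_{i,\varepsilon}$ with $w_{i,\varepsilon}\in A_i(x_\varepsilon)$. Pick any $u\in\bigcap_{i\in I}\dom A_i=\dom A\neq\varnothing$. Rectangularity of each $A_i$ at $u\in\dom A_i$ and $y_i\in\ran A_i$ supplies a finite constant $M_i$ with $\scal{y_i-w_{i,\varepsilon}}{u-x_\varepsilon}\geq -M_i$; summing over $i$ and using $\sum_{i\in I}(y_i-w_{i,\varepsilon})=\varepsilon x_\varepsilon$ gives
\[
\varepsilon\scal{x_\varepsilon}{u}-\varepsilon\|x_\varepsilon\|^2 \;\geq\; -\sum_{i\in I}M_i,
\]
a quadratic inequality in $\|x_\varepsilon\|$ forcing $\|x_\varepsilon\|=O(1/\sqrt{\varepsilon})$ and hence $\varepsilon x_\varepsilon\to 0$ as $\varepsilon\to 0^+$. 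Consequently $\sum_{i\in I}w_{i,\varepsilon}=y-\varepsilon x_\varepsilon\to y$, placing $y\in\overline{\ran A}$ and giving the closure equality.

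For the relative-interior identity I would refine the same perturbation argument. A point $y\in\reli\sum_{i\in I}\ran A_i$ has a neighbourhood $V$ inside the affine hull of $\sum_{i\in I}\ran A_i$ on which the decomposition $y'=\sum_{i\in I}y_i'$ with $y_i'\in\ran A_i$ can be chosen in a (jointly) bounded fashion; this allows the rectangularity constants $M_i$ to be taken uniform in $y'\in V$ (the domain point $u$ being fixed), so the estimate above yields an approximating family $\sum_i w_{i,\varepsilon}(y')\to y'$ uniformly on $V$. Hence $V\subseteq\overline{\ran A}$, and since $\overline{\ran A}$ is convex for maximally monotone $A$, a standard interior-of-closure argument delivers $y\in\reli\ran A$.

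The principal obstacle is this upgrade from closure to uniform relative-interior control, since rectangularity in~\eqref{rectangular definition} supplies only the existence of a bound with no a priori modulus; a careful selection of the representatives $y_i'$ from $\ran A_i$ is needed so that the resulting $M_i$'s depend continuously on $y'$. A cleaner alternative route is induction on $|I|$: apply the two-operator Brezis--Haraux theorem~\cite{BH} to the partial sum $B_{k-1}:=A_1+\cdots+A_{k-1}$ and the next summand $A_k$, after verifying that each $B_k$ remains rectangular and maximally monotone (which is where the hypothesis that the \emph{full} sum $A$ is maximally monotone, together with rectangularity of each summand, is absorbed into the induction). The two-operator result then iteratively yields the claimed near-equality.
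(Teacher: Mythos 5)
First, a point of comparison: the paper offers no proof of this statement at all --- it is quoted verbatim as Fact~\ref{Pennanen} from Pennanen's paper \cite{Pen} --- so your proposal can only be judged on its own merits. The closure half of your argument is correct and complete: the pointwise inclusion $\ran A\subseteq\sum_{i\in I}\ran A_i$ is trivial, and your perturbation argument (surjectivity of $A+\varepsilon\Id$ via Fact~\ref{MintyThm}, decomposition of $y-\varepsilon x_\varepsilon$, summation of the rectangularity bounds to get $\varepsilon\scal{x_\varepsilon}{u}-\varepsilon\|x_\varepsilon\|^2\geq-\sum_{i\in I}M_i$, hence $\|x_\varepsilon\|=O(1/\sqrt{\varepsilon})$ and $\varepsilon x_\varepsilon\to0$) is exactly the Brezis--Haraux argument, generalized correctly to $n$ summands. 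One small slip in your first paragraph: an inclusion $C\subseteq D$ passes to closures but \emph{not} to relative interiors, so ``passing \dots to relative interiors is then free'' is false; fortunately that step is never needed.

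The genuine gap is the relative-interior half, and neither of your routes closes it. The uniform-modulus route is, as you concede, not carried out: rectangularity supplies no modulus, and no selection of representatives $y_i'$ with locally uniform constants $M_i$ is exhibited. But the deeper problem is that uniformity is the wrong tool. The missing ingredient is that the range of a maximally monotone operator is \emph{nearly convex}: there is a convex set $C$ with $C\subseteq\ran A\subseteq\overline{C}$ (see, e.g., \cite[Theorem~12.41]{RockWets} in $\RR^n$, or the development in \cite{bmow2013}; this rests on Rockafellar's local boundedness theorem \cite{Rock bndd}, which in the Hilbert setting yields $\inte\overline{\ran A}\subseteq\ran A$ and hence the interior statement). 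Granting this, your closure half already finishes the proof: from $C\subseteq\ran A\subseteq\sum_{i\in I}\ran A_i\subseteq\overline{\ran A}\subseteq\overline{C}$, both $\ran A$ and $\sum_{i\in I}\ran A_i$ are squeezed between the convex set $C$ and its closure, and any set so squeezed has closure $\overline{C}$ and relative interior $\reli C$. No uniform estimate and no second perturbation argument are required.

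Your fallback induction is broken at a concrete point: you claim each partial sum $B_k=A_1+\cdots+A_k$ can be ``verified'' to be maximally monotone, but maximal monotonicity of the full sum does not pass to partial sums. For instance, in $\HH=\RR$ take $\gra A_1=\gra A_2=\{(0,0)\}$ and $A_3=N_{\{0\}}$: all three are monotone and rectangular, the total sum equals $N_{\{0\}}$ and is maximally monotone, yet $B_2=A_1+A_2$ is not maximally monotone. Since the statement you are inducting on assumes maximal monotonicity of the sum, the inductive hypothesis cannot be invoked for $B_{n-1}$. (The two-operator Brezis--Haraux theorem itself survives --- it requires only that the sum $B_{n-1}+A_n=A$ be maximal, and sums of rectangular operators are indeed rectangular --- but it then relates $\ran A$ to $\ran B_{n-1}+\ran A_n$ and leaves you no way to unfold $\ran B_{n-1}$ further.) The sandwich argument above makes induction unnecessary.
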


\begin{theorem}[domain and range of $\mathcal{R}_\mu$]\label{domain and range of the resolvent average}
Suppose that for each $i\in I$,  $A_i:\HH\rightrightarrows\HH$ is maximally monotone. Then 
\begin{equation}\label{finite dimension domain and range of the resolvent average}
\ran\average\simeq\sum_{i\in I}\lambda_i\ran A_i,\ \ \ \ \ \ \ \dom\average\simeq\sum_{i\in I}\lambda_i\dom A_i.
\end{equation}
\end{theorem}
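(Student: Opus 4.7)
The plan is to reduce the claim to an application of Pennanen's extension of the Brezis--Haraux theorem (Fact~\ref{Pennanen}). For each $i\in I$, I set $T_i:=(A_i+\mu^{-1}\Id)^{-1}$. Since $A_i$ is maximally monotone, Minty's theorem (Fact~\ref{MintyThm}) will give that $T_i$ is single-valued with $\dom T_i=\HH$ and $\ran T_i=\dom A_i$. I then plan to write $T_i=J_{\mu A_i}\circ(\mu\Id)$ and use the firm nonexpansiveness of $J_{\mu A_i}$ (Fact~\ref{f:Minty}) to deduce that $T_i$ is $\mu^{-1}$-cocoercive, i.e., $\mu^{-1}\|T_i x-T_i y\|^2\leq\scal{x-y}{T_i x-T_i y}$ for every $x,y\in\HH$.

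The first key step is to verify that each $T_i$, and therefore each positive multiple $\lambda_i T_i$, is rectangular. Given $x\in\HH$ and $v=T_i u\in\ran T_i$, the plan is to substitute the cocoercivity estimate into the definition of the Fitzpatrick function and complete the square, producing a bound of the form $F_{T_i}(x,v)\leq\tfrac{\mu}{4}\|x-u\|^2+\scal{T_i u}{x}<\infty$, which is exactly rectangularity in the form~\eqref{Fitzpatrick rectangular}. Next, since each $T_i$ is Lipschitz continuous with $\dom T_i=\HH$, the sum $S:=\sum_{i\in I}\lambda_i T_i$ is a continuous monotone operator defined on all of $\HH$ and is therefore maximally monotone. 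Fact~\ref{Pennanen}, applied to the rectangular maximally monotone summands $\lambda_i T_i$, then yields the near-equality
\begin{equation*}
\ran S\ \simeq\ \sum_{i\in I}\lambda_i\ran T_i\ =\ \sum_{i\in I}\lambda_i\dom A_i.
\end{equation*}

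To conclude the domain part of~\eqref{finite dimension domain and range of the resolvent average}, I will observe that the defining identity~\eqref{ResAvgDef} can be rewritten as $\average+\mu^{-1}\Id=S^{-1}$, so $\dom\average=\dom S^{-1}=\ran S$, which combined with the near-equality above gives the domain formula. The range formula will then follow by applying the domain formula to the maximally monotone mappings $A_i^{-1}$ and invoking the inversion formula of Theorem~\ref{mainresult}:
\begin{equation*}
\ran\average=\dom(\average)^{-1}=\dom\averageinverse\ \simeq\ \sum_{i\in I}\lambda_i\dom A_i^{-1}\ =\ \sum_{i\in I}\lambda_i\ran A_i.
\end{equation*}
I expect the main obstacle to be the verification of rectangularity of the cocoercive operators $T_i$; once this and the maximal monotonicity of $S$ are in place, Pennanen's theorem does the heavy lifting, and the inversion formula handles the range in one line.
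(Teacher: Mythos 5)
Your proposal is correct and takes essentially the same route as the paper: the paper likewise applies Pennanen's theorem (Fact~\ref{Pennanen}) to the resolvents as rectangular, maximally monotone summands --- citing that firmly nonexpansive mappings are maximally monotone and rectangular where you re-derive rectangularity from cocoercivity via the Fitzpatrick function --- and then deduces the range formula from the domain formula through the inversion formula of Theorem~\ref{mainresult} exactly as you do. The only cosmetic difference is that you work with $T_i=(A_i+\mu^{-1}\Id)^{-1}=J_{\mu A_i}\circ(\mu\Id)$ rather than with $J_{\mu A_i}$ itself, which changes nothing since both have range $\dom A_i$.
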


\begin{proof}
We first recall that a firmly nonexpansive mapping $T:\HH\to\HH$ is maximally monotone (see \cite[Example~20.27]{BC2011}) and rectangular (see \cite[Example~24.16]{BC2011}). Thus, we see that all of the mappings $J_{\mu A_i},\ i\in I$ as well as the firmly nonexpansive mapping $J_{\mu\average}=\sum_{i\in I}\lambda_iJ_{\mu A_i}$ are maximally monotone and rectangular. As a consequence, we may now apply Fact~\ref{Pennanen} in order to obtain
$$
\ran J_{\mu\average}\simeq\sum_{i\in I}\ran\lambda_i J_{\mu A_i}=\sum_{i\in I}\lambda_i\ran J_{\mu A_i}.
$$
Since, given a maximally monotone mapping $A:\HH\rightrightarrows\HH$, we have $\dom A=\dom\mu A=\ran J_{\mu A}$ (see Minty's parametrization~\eqref{Minty's parametrization}), we arrive at the domain formula in~\eqref{finite dimension domain and range of the resolvent average}. We now combine the domain near equality with Theorem~\ref{mainresult} in order to obtain 
$$
\ran\average=\dom(\average)^{-1}=\dom\averageinverse\simeq\sum_{i\in I}\lambda_i\dom A_i^{-1}=\sum_{i\in I}\lambda_i\ran A_i.
$$
\end{proof}

We see that by employing the resolvent average we avoid constraint qualifications we had while employing the arithmetic average, one of the most obvious of which is we can now average mappings the domains of which do not intersect. 

At this point we demonstrate why even in the finite-dimensional case the near equality in~\eqref{finite dimension domain and range of the resolvent average} cannot be replaced by an equality. To this end we consider the following example.

\begin{example} \cite[Example~5.4]{BBBRW}  discussed the function $f:\RR^2\to\RX$, defined by
$$
f(x,y)=
\begin{cases}
-\sqrt{xy}, & x\geq 0,y\geq 0; \\ 
+\infty, & \text{otherwise.}
\end{cases}
$$
In \cite{BBBRW} the function $f$ was presented as an example of a proper, lower semicontinuous and sublinear function which is not subdifferentiable at certain points of the boundary of its domain, where the boundary is $(\RR_+\times\{0\})\cup(\{0\}\times\RR_+)$. In fact, it was observed in \cite{BBBRW} that the only point on the boundary of $\dom\partial f$ which belongs to $\dom\partial f$ is the origin. We now consider the function $g:\RR^2\to\RX$ defined by
 $$
g(x,y)=\max\{f(1-x,y),f(1+x,y)\}=
\begin{cases}
-\sqrt{(1-|x|)y}, & -1\leq x\leq 1 ,0\leq y; \\ 
+\infty, & \text{otherwise.}
\end{cases}
$$
We set $D=\inte\dom g=\{(x,y)|\ -1<x<1,\ 0<y\}$. Then it follows that $g$ is lower semicontinuous, convex and $\dom\partial g=D\cup\{(-1,0),(1,0)\}$. Now, we set $n=2$, $A_1=A_2=\partial g,\ \mu=1,\ 0<\lambda<1,\ \lambda_1=\lambda$ and $\lambda_2=1-\lambda$, then $\averageonelambda=\partial g$ (see formula~\eqref{same mapping average}) and hence
\begin{align*}
\dom\averageonelambda&=\dom\partial g=D\cup\{(-1,0),(1,0)\}\\
&\neq D\cup\{(-1,0),(1-2\lambda,0),(2\lambda-1,0),(1,0)\}=\lambda_1\dom A_1+\lambda_2\dom A_2.
\end{align*} 
Letting $A_1=A_2=\partial g^*=(\partial g)^{-1}$ yields the same inequality with ranges instead of domains. Finally, we note that equality fails already in~\eqref{BH near equality}, that is, since $\partial g^*$ is the subdifferential of a proper, lower semicontinuous and convex function, it is rectangular (see \cite[Example 24.9]{BC2011}), maximally monotone and we have 
$$
\ran(\partial g^*+\partial g^*)=2D\cup\{(-2,0),(2,0)\}\neq2D\cup\{(-2,0),(0,0),(2,0)\}=\ran\partial g^*+\ran\partial g^*.
$$ 
(For another example of this type, see~\cite[Example 3.14]{BM}).
\end{example}

\subsection{The Fitzpatrick function of the resolvent average}

We relate the Fitzpatrick function of the resolvent average with the Fitzpatrick functions of the averaged mappings in the following result:

\begin{theorem}[Fitzpatrick function of $\mathcal{R}_\mu$]
Suppose that for each $i\in I$,  $A_i:\HH\rightrightarrows\HH$ is maximally monotone. Then 
\begin{equation}\label{resolvent average Fitzpatrick function inequality}
F_{\mu\average}\leq\sum_{i\in I}\lambda_i F_{\mu A_i}\ \ \ \text{in particular},\ \ \ F_{\averageonelambda}\leq\sum_{i\in I}\lambda_i F_{ A_i}
\end{equation}
and
\begin{equation}\label{resolvent average Fitzpatrick function domain}
\sum_{i\in I}\lambda_i\dom F_{\mu A_i}\subseteq\dom F_{\mu\average}\ \ \ \text{in particular},\ \ \ \sum_{i\in I}\lambda_i\dom F_{ A_i}\subseteq\dom F_{\averageonelambda}.
\end{equation}
\end{theorem}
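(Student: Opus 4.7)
The plan is to reduce both claims to explicit computations that hinge on two ingredients: Minty's parametrization of the graph of the maximally monotone operator $\mu\average$, and the resolvent identity~\eqref{resolventidentity}. Since each $A_i$ is maximally monotone, Theorem~\ref{neededlater} ensures $\mu\average$ is maximally monotone, so by Fact~\ref{MintyThm} every $(z,w)\in\gra \mu\average$ can be written as $z=J_{\mu\average}(y)$, $w=y-z$ for some $y\in\HH$. Setting $z_i:=J_{\mu A_i}(y)$ and $w_i:=y-z_i$, we have $(z_i,w_i)\in\gra \mu A_i$, $z=\sum_i\lambda_i z_i$, $w=\sum_i\lambda_i w_i$, and the crucial structural feature $z_i+w_i=y$ independent of $i$, whence $w_i-w_j=-(z_i-z_j)$ for all $i,j\in I$.

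For the inequality~\eqref{resolvent average Fitzpatrick function inequality}, I would fix $(x,v)\in\HH\times\HH$ and apply identity~\eqref{f:monoident} to $\scal{w}{z}=\Scal{\sum_i\lambda_i w_i}{\sum_j\lambda_j z_j}$; the structural feature collapses the cross term to a nonnegative quadratic, giving $\scal{w}{z}=\sum_i\lambda_i\scal{w_i}{z_i}+\tfrac{1}{2}\sum_{(i,j)}\lambda_i\lambda_j\|z_i-z_j\|^2$. Since $\sum_i\lambda_i=1$ also yields $\scal{w}{x}+\scal{v}{z}=\sum_i\lambda_i(\scal{w_i}{x}+\scal{v}{z_i})$, combining produces
\begin{equation*}
\scal{w}{x}+\scal{v}{z}-\scal{w}{z}=\sum_{i\in I}\lambda_i\big(\scal{w_i}{x}+\scal{v}{z_i}-\scal{w_i}{z_i}\big)-\tfrac{1}{2}\sum_{(i,j)\in I\times I}\lambda_i\lambda_j\|z_i-z_j\|^2.
\end{equation*}
Dropping the nonpositive correction and bounding each bracket by $F_{\mu A_i}(x,v)$ via the definition of the Fitzpatrick function, then taking the supremum over $y\in\HH$, yields~\eqref{resolvent average Fitzpatrick function inequality}; specializing $\mu=1$ gives the ``in particular'' statement.

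For~\eqref{resolvent average Fitzpatrick function domain}, I would take $(x_i,v_i)\in\dom F_{\mu A_i}$ for each $i$, set $x=\sum_i\lambda_i x_i$ and $v=\sum_i\lambda_i v_i$, and repeat the expansion applying~\eqref{f:monoident} now to all three pairings. Using $w_i-w_j=-(z_i-z_j)$ once more, the mixed cross terms combine into a single symmetric double sum $\tfrac{1}{2}\sum_{(i,j)}\lambda_i\lambda_j\scal{z_i-z_j}{(x_i-v_i)-(x_j-v_j)}$, which I would control via Cauchy--Schwarz and Young's inequality $2|\scal{a}{b}|\leq\|a\|^2+\|b\|^2$. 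The resulting $\|z_i-z_j\|^2$ contribution is then exactly dominated by the $-\tfrac{1}{2}\sum\lambda_i\lambda_j\|z_i-z_j\|^2$ already present, leaving the estimate
\begin{equation*}
\scal{w}{x}+\scal{v}{z}-\scal{w}{z}\leq\sum_{i\in I}\lambda_i F_{\mu A_i}(x_i,v_i)+\tfrac{1}{4}\sum_{(i,j)\in I\times I}\lambda_i\lambda_j\|(x_i-v_i)-(x_j-v_j)\|^2,
\end{equation*}
whose right-hand side is finite and, critically, independent of $y$. Taking the supremum yields $F_{\mu\average}(x,v)<\infty$, i.e., the desired inclusion.

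The main technical obstacle is the careful accounting of the three quadratic correction terms produced by~\eqref{f:monoident}: one must exploit the parametrization $z_i+w_i=y$ so that all $y$-dependent squared-norm contributions either are sign-favourable or are absorbed into a single negative term, leaving a $y$-free bound. Once this bookkeeping is done cleanly the supremum defining $F_{\mu\average}$ is immediately controlled.
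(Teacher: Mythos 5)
Your proof is correct and follows essentially the same route as the paper's: Minty's parametrization of $\gra \mu\average$ via the resolvent identity, expansion by identity~\eqref{f:monoident}, and control of the resulting quadratic cross terms using $w_i-w_j=-(z_i-z_j)$. The only cosmetic difference is that the paper establishes one master inequality (completing the square, yielding the constant $\tfrac{1}{8}$) and then specializes it to both claims, whereas you run the computation twice and invoke Young's inequality (constant $\tfrac{1}{4}$) for the domain inclusion --- a harmless variation.
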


\begin{proof}
For each $\in I$, let $x_i,u_i$ and $z$ be points in $\HH$ and let $T_i=J_{\mu A_i}$. We set $(x,u)=\sum_{i\in I}\lambda_i(x_i,u_i)$ and $T=\sum_{i\in I}\lambda_i T_i=J_{\mu \average}$. Then we employ equation~\eqref{f:monoident} in order to obtain
\begin{align}
&\scal{z-Tz}{x}+\scal{u}{Tz}-\scal{z-Tz}{Tz}
=\sum_{i\in I}\lambda_i\big(\scal{z-T_i z}{x_i}+\scal{u_i}{T_iz}-\scal{z-T_iz}{T_iz}\big)\nonumber\\
&-\sum_{(i,j)\in I\times I}\frac{\lambda_i\lambda_j}{2}\big(\scal{T_jz-T_iz}{x_i-x_j}+\scal{u_i-u_j}{T_iz-T_jz}-\scal{T_jz-T_iz}{T_iz-T_jz}\big)\nonumber\\
=&\sum_{i\in I}\lambda_i\big(\scal{z-T_i z}{x_i}+\scal{u_i}{T_iz}-\scal{z-T_iz}{T_iz}\big)\nonumber\\
&-\sum_{(i,j)\in I\times I}\frac{\lambda_i\lambda_j}{2}\bigg(-\frac{1}{4}\|(u_i-u_j)-(x_i-x_j)\|^2+\Big\|\frac{(u_i-u_j)-(x_i-x_j)}{2}+(T_iz-T_jz)\Big\|^2\bigg)\nonumber\\
\leq&\sum_{i\in I}\lambda_i\big(\scal{z-T_i z}{x_i}+\scal{u_i}{T_iz}-\scal{z-T_iz}{T_iz}\big)+\sum_{(i,j)\in I\times I}\frac{\lambda_i\lambda_j}{8}\|(u_i-u_j)-(x_i-x_j)\|^2.\label{Fitzpatrick inequality}
\end{align}
Given a maximally monotone mapping $A:\HH\rightrightarrows\HH$, combining the definition \eqref{Fitzpatrick function} of the Fitzpatrick function $F_A$ with Minty's parametrization~\eqref{Minty's parametrization} implies that 
$$
F_A (x,u)=\sup_{z\in\HH}\big(\scal{z-J_A z}{x}+\scal{u}{J_A z}-\scal{z-J_A z}{J_A z}\big).
$$ 
Thus, by employing inequality~\eqref{Fitzpatrick inequality} we obtain
\begin{equation}\label{Fitzpatrick function inequality}
F_{\mu\average}(x,u)\leq\sum_{i\in I}\lambda_i F_{\mu A_i}(x_i,u_i)+\sum_{(i,j)\in I\times I}\frac{\lambda_i\lambda_j}{8}\|(u_i-u_j)-(x_i-x_j)\|^2.
\end{equation}
For each $i\in I$, letting $(x_i,u_i)=(x,u)$ in inequality~\eqref{Fitzpatrick function inequality} we arrive at inequality~\eqref{resolvent average Fitzpatrick function inequality}. For each $i\in I$, letting $(x_i,u_i)\in\dom F_{\mu A_i}$ in inequality~\eqref{Fitzpatrick function inequality} we see that $F_{\mu\average}(x,u)<\infty$, that is, we obtain inclusions~\eqref{resolvent average Fitzpatrick function domain} and complete the proof. 
\end{proof}

\section{Dominant properties of the resolvent average}\label{dominant}

\subsection{Domain and range properties}
The following domain and range properties of the resolvent average are immediate consequences of Theorem~\ref{domain and range of the resolvent average}:  

\begin{theorem}\label{t:FDdom}
\emph{\textbf{(nonempty\ interior\ of \ the\ domain, \ fullness\ of\ the\ domain\ and\ surjectivity\ are\ dominant)}}
Suppose that for each $i\in I$, $A_i:\HH\rightrightarrows\HH$ is maximally monotone.
\begin{enumerate}
\item\label{t:NEdom} If there exists $i_0\in I$ such that $\inte\dom A_{i_0}\neq\varnothing$, then $\inte\dom\average\neq\varnothing$.
\item\label{t:Fdom} If there exists $i_0\in I$ such that $\dom A_{i_0}=\HH$, then $\dom\average=\HH$.
\item\label{t:surjdom} If there exists $i_0\in I$ such that $A_{i_0}$ is surjective, then $\average$ is surjective. 
\end{enumerate}
\end{theorem}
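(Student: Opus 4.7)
The overall approach is to reduce the theorem to Theorem~\ref{domain and range of the resolvent average}, which already gives the near-equalities $\dom\average\simeq\sum_{i\in I}\lambda_i\dom A_i$ and $\ran\average\simeq\sum_{i\in I}\lambda_i\ran A_i$. Recalling that $C\simeq D$ means $\overline{C}=\overline{D}$ together with $\reli C=\reli D$, and combining this with the always-valid inclusion $\reli C\subseteq C\subseteq\overline{C}$, the plan is to verify each of the three properties on the Minkowski sum (the ``model'' appearing on the right-hand side) and then transfer it to $\dom\average$ or $\ran\average$ through the near-equality. Since each $A_i$ is maximally monotone, $\dom A_i\neq\varnothing$, so I can fix an arbitrary $a_i\in\dom A_i$ for every $i\neq i_0$ once and for all.

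For part \ref{t:Fdom}, the assumption $\dom A_{i_0}=\HH$ immediately gives $\sum_{i\in I}\lambda_i\dom A_i\supseteq\lambda_{i_0}\HH+\sum_{i\neq i_0}\lambda_i a_i=\HH$, so this sum equals $\HH$; the near-equality then yields $\reli\dom\average=\HH$, and combined with $\reli\dom\average\subseteq\dom\average\subseteq\HH$ this forces $\dom\average=\HH$. For part \ref{t:NEdom}, the translated open set $\lambda_{i_0}\inte\dom A_{i_0}+\sum_{i\neq i_0}\lambda_i a_i$ is a nonempty open subset of $S:=\sum_{i\in I}\lambda_i\dom A_i$, so $\inte S\neq\varnothing$; hence the closed affine hull of $S$ is all of $\HH$. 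The closure equality $\overline{\dom\average}=\overline{S}$ transfers this, so the closed affine hull of $\dom\average$ is also $\HH$, whence $\reli\dom\average=\inte\dom\average$ and $\reli S=\inte S$; the relative-interior equality $\reli\dom\average=\reli S$ therefore yields $\inte\dom\average=\inte S\neq\varnothing$. Finally, for part \ref{t:surjdom}, surjectivity of $A_{i_0}$ means $\dom A_{i_0}^{-1}=\HH$, so applying part \ref{t:Fdom} to the family $(A_i^{-1})_{i\in I}$ together with the inversion formula $(\average)^{-1}=\averageinverse$ from Theorem~\ref{mainresult} yields $\ran\average=\dom(\average)^{-1}=\dom\averageinverse=\HH$.

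The only genuine subtlety lies in part \ref{t:NEdom}: one cannot conclude $\inte\dom\average\neq\varnothing$ directly from $\reli\dom\average\neq\varnothing$, since a priori the closed affine hull of $\dom\average$ could be a proper subspace of $\HH$. The step that unlocks the argument is invoking the other half of the near-equality, namely $\overline{\dom\average}=\overline{S}$, to force the closed affine hulls of $\dom\average$ and $S$ to coincide with $\HH$; this is precisely what upgrades $\reli$ to $\inte$ on both sides. Once this observation is secured, each of the three parts becomes essentially a one-line consequence of Theorem~\ref{domain and range of the resolvent average}.
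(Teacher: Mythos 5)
Your proposal is correct and follows exactly the route the paper intends: the paper states this theorem without a separate proof, declaring it an immediate consequence of Theorem~\ref{domain and range of the resolvent average}, and your argument is precisely a careful fleshing-out of that derivation (including the inversion formula of Theorem~\ref{mainresult} for surjectivity, which is also how the paper obtained its range formula). Your observation that the closure half of the near-equality is needed to upgrade $\reli$ to $\inte$ in part \ref{t:NEdom} is a genuine subtlety that the paper's ``immediate consequence'' phrasing glosses over, and you handle it correctly.
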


\begin{corollary}\label{t:FDdom functions}
\emph{\textbf{(nonempty\ interior\ and\ fullness\ of\ domain\ are\ dominant w.r.t. $p_\mu$)}}
Suppose that for each $i\in I$, $f_i:\HH\to\RX$ is proper, lower semicontinuous and convex. If there exists $i_0\in I$ such that $\inte\dom f_{i_0}\neq\varnothing$, then $\inte\dom p_\mu({\bf f},{\boldsymbol \lambda})\neq\varnothing$. If there exists $i_0\in I$ such that $\dom f_{i_0}=\HH$, then $\dom p_\mu({\bf f},{\boldsymbol \lambda})=\HH$.
\end{corollary}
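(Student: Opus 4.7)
The plan is to reduce the corollary directly to Theorem~\ref{t:FDdom} via the identification $\partial p_\mu({\bf f},{\boldsymbol \lambda})=\mathcal{R}_\mu(\boldsymbol{\partial}{\bf f},{\boldsymbol \lambda})$ recorded in equation~\eqref{sub of proximal average}. By Rockafellar's theorem each $\partial f_i$ is maximally monotone, so Theorem~\ref{t:FDdom} applies to the tuple $\boldsymbol{\partial}{\bf f}=(\partial f_1,\dots,\partial f_n)$. The only additional ingredient is the standard convex-analytic dictionary relating $\dom g$ and $\dom\partial g$ for a proper lower semicontinuous convex function $g$.

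For the nonempty-interior statement, I would invoke the classical identity $\inte\dom g=\inte\dom\partial g$ valid for any proper lower semicontinuous convex $g:\HH\to\RX$. Applied to $f_{i_0}$, this turns the hypothesis $\inte\dom f_{i_0}\neq\varnothing$ into $\inte\dom\partial f_{i_0}\neq\varnothing$, so Theorem~\ref{t:FDdom}\ref{t:NEdom} yields $\inte\dom\mathcal{R}_\mu(\boldsymbol{\partial}{\bf f},{\boldsymbol \lambda})\neq\varnothing$, i.e., $\inte\dom\partial p_\mu({\bf f},{\boldsymbol \lambda})\neq\varnothing$. Since Fact~\ref{resolvents of subs and proximal reformulation} ensures that $p_\mu({\bf f},{\boldsymbol \lambda})$ is itself proper, lower semicontinuous and convex, reapplying the same identity in the opposite direction produces $\inte\dom p_\mu({\bf f},{\boldsymbol \lambda})\neq\varnothing$.

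For the full-domain statement, if $\dom f_{i_0}=\HH$, then $f_{i_0}$ is a finite-valued lower semicontinuous convex function on $\HH$, hence continuous and therefore subdifferentiable everywhere, giving $\dom\partial f_{i_0}=\HH$. Theorem~\ref{t:FDdom}\ref{t:Fdom} then delivers $\dom\partial p_\mu({\bf f},{\boldsymbol \lambda})=\HH$, and the trivial inclusion $\dom\partial p_\mu({\bf f},{\boldsymbol \lambda})\subseteq\dom p_\mu({\bf f},{\boldsymbol \lambda})$ completes the argument.

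No genuine obstacle is anticipated: the only steps outside the main machinery are the two standard passages between $\dom g$ and $\dom\partial g$. An entirely equivalent alternative route would be to quote the exact domain formula $\dom p_\mu({\bf f},{\boldsymbol \lambda})=\sum_{i\in I}\lambda_i\dom f_i$ from \cite[Theorem~4.6]{BGLW} and to note that a Minkowski sum inherits fullness, or nonempty interior, from any single summand (picking one point in each of the remaining $\dom f_i$ and translating).
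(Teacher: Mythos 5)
Your proposal is correct and follows essentially the same route as the paper's proof: set $A_i=\partial f_i$, pass from $\inte\dom f_{i_0}\neq\varnothing$ (resp.\ $\dom f_{i_0}=\HH$) to the corresponding statement for $\dom\partial f_{i_0}$, apply Theorem~\ref{t:FDdom} to $\mathcal{R}_\mu(\boldsymbol{\partial}\bold{f},\boldsymbol{\lambda})=\partial p_\mu({\bf f},{\boldsymbol \lambda})$ via equation~\eqref{sub of proximal average}, and conclude through the inclusion $\dom\partial p_\mu({\bf f},{\boldsymbol \lambda})\subseteq\dom p_\mu({\bf f},{\boldsymbol \lambda})$. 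The paper only uses the one-sided inclusion $\inte\dom g\subseteq\dom\partial g$ rather than your two-sided identity, but the two are equivalent for this purpose, and your closing alternative via the exact domain formula of \cite[Theorem~4.6]{BGLW} is also valid.
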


\begin{proof}
For each $i\in I$, we set $A_i=\partial f_i$. Suppose that for some $i_0\in I$, $\inte\dom f_{i_0}\neq\varnothing$. Since $\inte\dom f_{i_0}\subseteq\dom\partial f_{i_0}$, we see that $\inte\dom A_{i_0}\neq\varnothing$ and it now follows from Theorem~\ref{t:FDdom}\ref{t:NEdom} that $\inte\dom\average\neq\varnothing$. Now equation~\eqref{sub of proximal average} implies that $\varnothing\neq\inte\dom\average=\inte\dom\partial p_\mu({\bf f},{\boldsymbol \lambda})\subseteq\inte\dom p_\mu({\bf f},{\boldsymbol \lambda})$. A similar argument implies that if $\dom f_{i_0}=\HH$, then $\dom p_\mu({\bf f},{\boldsymbol \lambda})=\HH$.  
\end{proof}

We recall that the function $f:\HH\to\RX$ is said to be \emph{coercive} if $\lim_{\|x\|\to\infty}f(x)=\infty$.  The function $f$ is said to be \emph{supercoercive} if $f/\|\cdot\|$ is coercive. As a consequence of Theorem~\ref{t:FDdom}\ref{t:surjdom} in finite-dimensional spaces we obtain the following result:

\begin{corollary}
\emph{\textbf{(supercoercivity is dominant w.r.t.$\ p_\mu$ in $\RR^n$)}}
{\rm \cite[Lemma~3.1(iii)]{GHW}}
Suppose that $\HH$ is finite-dimensional and that for each $i\in I$, $f_i:\HH\to\RX$ is proper, lower semicontinuous and convex. If there exists $i_0\in I$ such that $f_{i_0}$ is supercoercive, then $p_\mu({\bf f},{\boldsymbol \lambda})$ is supercoercive.
\end{corollary}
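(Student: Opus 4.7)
The plan is to reduce the claim to Corollary~\ref{t:FDdom functions} via Fenchel conjugation. In finite dimensions, a proper lower semicontinuous convex function $f$ is supercoercive if and only if $\dom f^* = \HH$ (this is the standard Moreau--Rockafellar characterization: $f/\|\cdot\|$ coercive is equivalent to $f^*$ being finite everywhere, and finite convex functions on $\RR^n$ are automatically continuous). So the supercoercivity hypothesis on $f_{i_0}$ translates to $\dom f_{i_0}^* = \HH$, and the desired conclusion is equivalent to $\dom (p_\mu(\mathbf{f},\boldsymbol{\lambda}))^* = \HH$.

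Now I would invoke the conjugation formula~\eqref{proximal conjugate}, namely
\begin{equation*}
\big(p_\mu(\mathbf{f},\boldsymbol{\lambda})\big)^* = p_{\mu^{-1}}(\mathbf{f}^*,\boldsymbol{\lambda}).
\end{equation*}
Since $f_{i_0}^*$ is proper, lower semicontinuous and convex with $\dom f_{i_0}^* = \HH$, Corollary~\ref{t:FDdom functions} (applied with parameter $\mu^{-1}$ in place of $\mu$) yields $\dom p_{\mu^{-1}}(\mathbf{f}^*,\boldsymbol{\lambda}) = \HH$. Hence $\dom\big(p_\mu(\mathbf{f},\boldsymbol{\lambda})\big)^* = \HH$, and the finite-dimensional characterization above then gives that $p_\mu(\mathbf{f},\boldsymbol{\lambda})$ is supercoercive.

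There is no real obstacle here; the only point requiring care is confirming the supercoercivity$\iff$full conjugate domain characterization in the finite-dimensional setting, and checking that the $f_i^*$'s satisfy the hypotheses of Corollary~\ref{t:FDdom functions} (which is immediate: each $f_i$ is proper, lsc, convex, hence so is $f_i^*$). Everything else is a two-step chain: apply conjugate-of-proximal-average, then apply the already-proved dominance of domain-fullness.
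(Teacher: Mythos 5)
Your proof is correct, but it runs along a different (in fact, dual) track from the paper's. The paper argues on the operator side: it sets $A_i=\partial f_i$, invokes the finite-dimensional equivalences $f$ supercoercive $\iff\dom f^*=\HH\iff\ran\partial f=\HH$, and then applies the dominance of \emph{surjectivity} for the resolvent average (Theorem~\ref{t:FDdom}\ref{t:surjdom}) together with the identity \eqref{sub of proximal average} to conclude $\ran\partial p_\mu({\bf f},{\boldsymbol \lambda})=\HH$, hence supercoercivity. You instead stay entirely on the function side: supercoercivity of $f_{i_0}$ gives $\dom f_{i_0}^*=\HH$, then the conjugation formula \eqref{proximal conjugate} plus the dominance of \emph{full domain} for the proximal average (Corollary~\ref{t:FDdom functions}, applied to ${\bf f^*}$ with parameter $\mu^{-1}$) give $\dom\big(p_\mu({\bf f},{\boldsymbol \lambda})\big)^*=\dom p_{\mu^{-1}}({\bf f^*},{\boldsymbol \lambda})=\HH$. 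The two routes are Fenchel-dual to one another (via $\ran\partial f=\dom\partial f^*$), and both ultimately rest on Theorem~\ref{domain and range of the resolvent average}; the difference is which corollary carries the load. The paper's version showcases the new operator-level surjectivity result, which is exactly how the corollary is framed in the text, and it needs no conjugation formula for $p_\mu$; your version avoids subdifferential ranges altogether but leans on the external identity \eqref{proximal conjugate} from \cite{BGLW}. Two small points you should make explicit: (a) $p_\mu({\bf f},{\boldsymbol \lambda})$ is proper, lower semicontinuous and convex (Fact~\ref{resolvents of subs and proximal reformulation}), so the finite-dimensional characterization can legitimately be applied to it in the reverse direction; and (b) finite-dimensionality is genuinely needed in that characterization (in infinite dimensions $\dom f^*=\HH$ does not imply supercoercivity, since a finite continuous convex function need not be bounded on bounded sets), so your reduction, like the paper's, cannot be pushed beyond $\RR^n$.
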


\begin{proof}
For each $i\in I$, we set $A_i=\partial f_i$. We now recall that in finite-dimensional spaces, a proper, lower semicontinuous and convex function $f$ is supercoercive if and only if $\dom f^*=\HH$, which is equivalent to $\HH=\dom\partial f^*=\ran\partial f$ (combine \cite[Corollary 13.3.1]{Rock} with \cite[Corollary 14.2.2]{Rock}, or, alternatively, see  \cite[Proposition 2.16]{BB1997}). Thus, since $\ran A_{i_0}=\HH$, then Theorem~\ref{t:FDdom}\ref{t:surjdom} together with equation
\eqref{sub of proximal average} guarantee that $\ran\partial p_\mu({\bf f},{\boldsymbol \lambda})=\ran\average=\HH$. Consequently, we see that $p_\mu({\bf f},{\boldsymbol \lambda})$ is supercoercive. 
\end{proof}

\subsection{Single-valuedness}
We shall say that the mapping $A:\HH\rightrightarrows\HH$ is at most single-valued if for every $x\in\HH$, $Ax$ is either empty or a singleton.

\begin{theorem}[single-valudeness is dominant]\label{single valued}
Suppose that for each $i\in I$, $A_i:\HH\rightrightarrows\HH$ is maximally monotone. If there exists $i_0\in I$ such that $A_{i_0}$ is at most single-valued, then $\average$ is at most single-valued.
\end{theorem}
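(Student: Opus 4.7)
The plan is to reduce the claim to the already-established injectivity statement for convex combinations of firmly nonexpansive maps, namely Corollary~\ref{equality in convex combination of firmly nonexpansive}\ref{firmly injective}. The bridge is a simple characterization: for a maximally monotone $A$, the operator $A$ is at most single-valued if and only if its resolvent $J_{\mu A}$ is injective.

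To establish this characterization, I would use Fact~\ref{f:Minty}, which gives $A = J_{\mu A}^{-1}\circ(\mu^{-1}\Id)-\mu^{-1}\Id$ (or equivalently $\mu A = J_{\mu A}^{-1}-\Id$). Thus for any $y\in\HH$,
\begin{equation*}
\mu A(y) = J_{\mu A}^{-1}(y) - y = \{\,x-y \mid J_{\mu A}x = y\,\}.
\end{equation*}
Since $J_{\mu A}$ is everywhere single-valued, the fiber $J_{\mu A}^{-1}(y)$ is a singleton (for each $y$ at which it is nonempty) precisely when $J_{\mu A}$ is injective; hence $A$ is at most single-valued iff $J_{\mu A}$ is injective.

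Now I would apply this characterization to $A_{i_0}$ to get that $J_{\mu A_{i_0}}$ is injective. By Fact~\ref{f:Minty} each $J_{\mu A_i}$ is firmly nonexpansive with full domain, so Corollary~\ref{equality in convex combination of firmly nonexpansive}\ref{firmly injective} applies: the convex combination $\sum_{i\in I}\lambda_i J_{\mu A_i}$ is injective. But by the resolvent identity \eqref{resolventidentity} this convex combination equals $J_{\mu\average}$. Since Theorem~\ref{neededlater} guarantees that $\average$ is maximally monotone, I can invoke the characterization once more in the reverse direction to conclude that $\average$ is at most single-valued.

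There is essentially no obstacle here beyond recording the single-valued $\Longleftrightarrow$ injective-resolvent equivalence; the real work was done in Corollary~\ref{equality in convex combination of firmly nonexpansive}\ref{firmly injective}, and the resolvent identity \eqref{resolventidentity} is exactly what is needed to transport injectivity from the sum of resolvents to the resolvent of the average.
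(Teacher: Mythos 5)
Your proposal is correct and takes essentially the same route as the paper's own proof: both reduce the claim to the characterization that a maximally monotone operator is at most single-valued if and only if its resolvent is injective, and then apply Corollary~\ref{equality in convex combination of firmly nonexpansive}\ref{firmly injective} together with the identity \eqref{resolventidentity}. The only difference is that the paper cites \cite[Theorem~2.1(iv)]{bmw12} for that characterization while you derive it in-house from Fact~\ref{f:Minty} via $\mu A = J_{\mu A}^{-1}-\Id$ (your derivation is valid; just note that the correct composed form is $A=\mu^{-1}J_{\mu A}^{-1}-\mu^{-1}\Id$, not $J_{\mu A}^{-1}\circ(\mu^{-1}\Id)-\mu^{-1}\Id$, though you only ever use the correct relation $\mu A = J_{\mu A}^{-1}-\Id$).
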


\begin{proof}
We recall that a maximally monotone mapping is at most single-valued if and only if its resolvent is injective (see \cite[Theorem 2.1(iv)]{bmw12}). Thus, if $A_{i_0}$ is at most single-valued, then $\mu A_{i_0}$ is at most single-valued and $J_{\mu A_{i_0}}$ is injective. We now apply Corollary~\ref{equality in convex combination of firmly nonexpansive}\ref{firmly injective} in order to conclude that $ J_{\mu\average}=\sum_{i\in I}\lambda_{i}J_{\mu A_{i}}$ is injective and, consequently, that $\mu\average$ is at most single-valued and so is $\average$.
\end{proof}

Recall that the proper, lower semicontinuous and convex function $f:\HH\to\RX$ is said to be \emph{essentially smooth} if the interior of its domain is nonempty and if $\partial f$ is at most single-valued. This definition of essential smoothness coincides with the classical notions of the same name in finite-dimensional spaces (see the paragraph below preceding Corollary~\ref{ess conv}). In terms of essential smoothness of the proximal average, we recover the following result:  

\begin{corollary}[essential smoothness is dominant w.r.t. $p_\mu$]\emph{\cite[Corollary 7.7]{BGLW}}\label{ess smooth}
Suppose that for each $i\in I$, $f_i:\HH\to\RX$ is proper, lower semicontinuous and convex. If there exists $i_0\in I$ such that $f_{i_0}$ is essentially smooth, then $p_\mu({\bf f},{\boldsymbol \lambda})$ is essentially smooth.
\end{corollary}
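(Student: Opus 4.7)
The plan is to combine the two preceding pieces of technology: Corollary~\ref{t:FDdom functions} (for the interior-of-domain requirement) and Theorem~\ref{single valued} (for at-most-single-valuedness of the subdifferential), linked together by identity~\eqref{sub of proximal average}. So essentially no new work is needed; the corollary is a direct packaging.

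Concretely, for each $i\in I$ I would set $A_i=\partial f_i$, which is maximally monotone by the standard Rockafellar theorem. Because $f_{i_0}$ is essentially smooth, by definition $\inte\dom f_{i_0}\neq\varnothing$ and $\partial f_{i_0}=A_{i_0}$ is at most single-valued. Then Corollary~\ref{t:FDdom functions} immediately yields $\inte\dom p_\mu({\bf f},{\boldsymbol \lambda})\neq\varnothing$, which handles the first requirement in the definition of essential smoothness.

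For the second requirement, I would invoke identity~\eqref{sub of proximal average}, namely $\partial p_\mu({\bf f},{\boldsymbol \lambda})=\average$, and then apply Theorem~\ref{single valued}: since $A_{i_0}$ is at most single-valued, so is $\average$, hence so is $\partial p_\mu({\bf f},{\boldsymbol \lambda})$. Combining the two conclusions finishes the proof.

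I do not foresee any real obstacle here — this corollary is purely an assembly of results already proved in the previous subsections, together with the definition of essential smoothness. The only minor point to be careful about is to use the correct (subdifferential-theoretic) definition of essential smoothness as stated just before the corollary, so that the implication from $A_{i_0}$ being at most single-valued to $\partial p_\mu({\bf f},{\boldsymbol \lambda})$ being at most single-valued really does close the definitional loop without requiring any additional regularity of the $f_i$'s beyond properness, lower semicontinuity and convexity.
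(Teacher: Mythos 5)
Your proposal is correct and matches the paper's own proof essentially line for line: set $A_i=\partial f_i$, get $\inte\dom p_\mu({\bf f},{\boldsymbol \lambda})\neq\varnothing$ from Corollary~\ref{t:FDdom functions}, and get at-most-single-valuedness of $\partial p_\mu({\bf f},{\boldsymbol \lambda})=\average$ from Theorem~\ref{single valued} via identity~\eqref{sub of proximal average}. No gaps; the only implicit point (also implicit in the paper) is that $p_\mu({\bf f},{\boldsymbol \lambda})$ is proper, lower semicontinuous and convex, which is guaranteed by Fact~\ref{resolvents of subs and proximal reformulation} so that the definition of essential smoothness applies.
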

\begin{proof}
For each $i\in I$, we set $A_i=\partial f_i$. Suppose that for some $i_0\in I$, $f_{i_0}$ is essentially smooth, then $A_{i_0}=\partial f_{i_0}$ is at most single-valued and $\inte\dom A_{i_0}\neq\varnothing$. It follows follows from Corollary~\ref{t:FDdom functions} that $\inte\dom p_\mu({\bf f},{\boldsymbol \lambda})\neq\varnothing$. Furthermore, Theorem~\ref{single valued} guarantees that $\partial p_\mu({\bf f},{\boldsymbol \lambda})=\average$ is at most single-valued. Consequently, $p_\mu({\bf f},{\boldsymbol \lambda})$ is essentially smooth.
\end{proof}

\subsection{Strict monotonicity}
Recall that $A:\HH\rightrightarrows\HH$ is said to be \emph{strictly monotone} if whenever $u\in Ax$ and $v\in Ay$ are such that $x\neq y$, then $0<\scal{u-v}{x-y}$. The resolvent perspective of this issue is addressed in the following proposition.

\begin{proposition}\label{averaged strictly firmly} 
Suppose that for each $i\in I$, $T_i:\HH\to\HH$ is firmly nonexpansive and set $T=\sum_{i\in I}\lambda_i T_i$. If there exits $i_0\in I$ such that 
\begin{equation}\label{strictly firmly definition}
T_{i_0}x\neq T_{i_0}y\ \ \ \ \ \Rightarrow\ \ \ \ \ \|T_{i_0}x-T_{i_0}y\|^2<\scal{x-y}{T_{i_0}x-T_{i_0}y},
\end{equation}
then $T$ has property~\eqref{strictly firmly definition} as well. 
\end{proposition}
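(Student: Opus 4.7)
The plan is to argue by contradiction, exploiting the equality characterization already established in Corollary~\ref{equality in convex combination of firmly nonexpansive}.

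First I would note that Corollary~\ref{average of firmly} already guarantees that $T = \sum_{i \in I} \lambda_i T_i$ is firmly nonexpansive, so by Fact~\ref{f:firm}\ref{f:firm-inequal} the weak inequality $\|Tx - Ty\|^2 \leq \scal{x-y}{Tx-Ty}$ holds for all $x,y \in \HH$. What needs to be ruled out is the case of equality when $Tx \neq Ty$.

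So suppose $x, y \in \HH$ satisfy $Tx \neq Ty$ and, toward a contradiction, $\|Tx-Ty\|^2 = \scal{x-y}{Tx-Ty}$. Corollary~\ref{equality in convex combination of firmly nonexpansive} applies verbatim and yields $T_i x - T_i y = Tx - Ty$ for every $i \in I$. Applying this to $i = i_0$ gives $T_{i_0}x - T_{i_0}y = Tx - Ty \neq 0$, so in particular $T_{i_0}x \neq T_{i_0}y$. Hypothesis~\eqref{strictly firmly definition} on $T_{i_0}$ then forces
\begin{equation*}
\|Tx-Ty\|^2 = \|T_{i_0}x - T_{i_0}y\|^2 < \scal{x-y}{T_{i_0}x - T_{i_0}y} = \scal{x-y}{Tx-Ty},
\end{equation*}
contradicting the assumed equality. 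Hence the strict inequality~\eqref{strictly firmly definition} is inherited by $T$.

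There is essentially no obstacle here: the entire content is packaged inside Corollary~\ref{equality in convex combination of firmly nonexpansive}, whose conclusion converts the equality case of the firm nonexpansiveness inequality for the convex combination into the corresponding equality for each summand. The only thing to be careful about is to extract from $T_{i_0}x - T_{i_0}y = Tx - Ty$ the nondegeneracy $T_{i_0}x \neq T_{i_0}y$, which follows at once from $Tx \neq Ty$ and is what allows the strict hypothesis on $T_{i_0}$ to be invoked.
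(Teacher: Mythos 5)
Your proof is correct and follows essentially the same route as the paper's: both rest on Corollary~\ref{equality in convex combination of firmly nonexpansive} to deduce $T_{i_0}x-T_{i_0}y=Tx-Ty$ from the equality case, and then invoke the strict hypothesis on $T_{i_0}$. The only difference is presentational — you argue by contradiction assuming $Tx\neq Ty$, while the paper argues the contrapositive, showing the equality case forces $Tx-Ty=T_{i_0}x-T_{i_0}y=0$.
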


\begin{proof}
Suppose that $x$ and $y$ are points in $\HH$ such that $\|Tx-Ty\|^2 = \scal{x-y}{Tx-Ty}$. Then Corollary~\ref{equality in convex combination of firmly nonexpansive} implies that $Tx-Ty=T_{i_0} x-T_{i_0} y$. In particular, we see that
$$
\|T_{i_0}x-T_{i_0}y\|^2=\|Tx-Ty\|^2 = \scal{x-y}{Tx-Ty}=\scal{x-y}{T_{i_0}x-T_{i_0}y}.
$$
Consequently, since $T_{i_0}$ has property~\eqref{strictly firmly definition}, we see that $Tx-Ty=T_{i_0} x-T_{i_0} y=0$, as claimed. \end{proof}

\begin{theorem}\label{t:strmonodom}
\textbf{\emph{(strict monotonicity is dominant)}}
Suppose that for each $i\in I$, $A_i:\HH\rightrightarrows\HH$ is maximally monotone. If there exists $i_0\in I$ such that $A_{i_0}$ is strictly monotone, then $\average$ is strictly monotone.
\end{theorem}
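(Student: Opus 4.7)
The plan is to translate strict monotonicity of $A_i$ into the resolvent-side property \eqref{strictly firmly definition} and then invoke Proposition~\ref{averaged strictly firmly}, which has been custom-built for precisely this kind of inheritance argument.

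First I would establish the following equivalence for a maximally monotone $A:\HH\rightrightarrows\HH$: $A$ is strictly monotone if and only if its resolvent $J_{\mu A}$ (equivalently, $J_A$) satisfies property \eqref{strictly firmly definition}. This is a direct computation using Minty's parametrization \eqref{Minty's parametrization}: pairs in $\gra \mu A$ are of the form $(J_{\mu A}z, z - J_{\mu A}z)$, so for $z_1,z_2\in\HH$,
\begin{equation*}
\scal{J_{\mu A}z_1-J_{\mu A}z_2}{(z_1-J_{\mu A}z_1)-(z_2-J_{\mu A}z_2)}=\scal{J_{\mu A}z_1-J_{\mu A}z_2}{z_1-z_2}-\|J_{\mu A}z_1-J_{\mu A}z_2\|^2,
\end{equation*}
and $J_{\mu A}z_1\neq J_{\mu A}z_2$ is precisely the condition that the corresponding first coordinates in $\gra \mu A$ differ. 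Strict monotonicity of $\mu A$ is therefore equivalent to \eqref{strictly firmly definition} for $J_{\mu A}$, and since $\mu>0$, strict monotonicity of $A$ and of $\mu A$ coincide.

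Now I would complete the argument as follows. By hypothesis $A_{i_0}$ is strictly monotone, so by the equivalence above, $J_{\mu A_{i_0}}$ satisfies \eqref{strictly firmly definition}. By Theorem~\ref{neededlater}, $\average$ is maximally monotone, and by~\eqref{resolventidentity} we have $J_{\mu\average}=\sum_{i\in I}\lambda_i J_{\mu A_i}$, a convex combination of firmly nonexpansive mappings. Proposition~\ref{averaged strictly firmly} then yields that $J_{\mu\average}$ itself satisfies \eqref{strictly firmly definition}. Applying the equivalence in reverse to the maximally monotone operator $\mu\average$ gives that $\mu\average$ is strictly monotone, hence so is $\average$.

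I do not anticipate a genuine obstacle: the heavy lifting has already been done in Corollary~\ref{equality in convex combination of firmly nonexpansive} and Proposition~\ref{averaged strictly firmly}, which show that the ``strict'' case of firm nonexpansiveness propagates through convex combinations. The only point requiring care is the clean reformulation of strict monotonicity via the resolvent, which is immediate from Minty's parametrization once the algebra is carried out.
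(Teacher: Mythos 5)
Your proof is correct and takes essentially the same approach as the paper: the paper likewise reduces strict monotonicity of a maximally monotone operator to property~\eqref{strictly firmly definition} of its resolvent and then applies Proposition~\ref{averaged strictly firmly} to the convex combination $J_{\mu\average}=\sum_{i\in I}\lambda_i J_{\mu A_i}$. The only difference is that the paper cites this equivalence from \cite[Theorem~2.1(vi)]{bmw12}, whereas you verify it directly from Minty's parametrization~\eqref{Minty's parametrization} — a harmless, self-contained substitute for the citation.
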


\begin{proof}
We recall that the maximally monotone mapping $A:\HH\rightrightarrows\HH$ is strictly monotone if and only if $J_A$ has property~\eqref{strictly firmly definition} (see \cite[Theorem 2.1(vi)]{bmw12}). Thus, since $J_{\mu A_{i_0}}$ has property~\eqref{strictly firmly definition}, then Proposition~\ref{averaged strictly firmly} guarantees that $J_{\mu\average}=\sum_{i\in I}\lambda_{i}J_{\mu A_{i}}$ has property~\eqref{strictly firmly definition}, which, in turn, implies that $\mu\average$ is strictly monotone and, therefore, so is $\average$.
\end{proof}

Recall that the proper, lower semicontinuous and convex function $f:\HH\to\RX$ is said to be \emph{essentially strictly convex} if $f^*$ is essentially smooth; $f$ is said to be \emph{Legendre} if $f$ is both, essentially smooth and essentially strictly convex. These definitions of essential strict convexity and Legendreness coincide with the classical notions of the same names in finite-dimensional spaces (see the next paragraph). Now suppose that for each $i\in I$, $f_i:\HH\to\RX$ is proper, lower semicontinuous and convex. We suppose further that there exists $i_0\in I$ such that $f_{i_0}$ is essentially strictly convex. Then $f^*_{i_0}$ is essentially smooth. Consequently, Corollary~\ref{ess smooth} guarantees that $p_{\mu^{-1}}({\bf f^*},{\boldsymbol \lambda})$ is essentially smooth. Thus, according to formula~\eqref{proximal conjugate}, the function $p_\mu({\bf f},{\boldsymbol \lambda})=(p_\mu({\bf f},{\boldsymbol \lambda}))^{**}=(p_{\mu^{-1}}({\bf f^*},{\boldsymbol \lambda}))^*$ is essentially convex. Consequently, we see that essential strict convexity is dominant w.r.t. the proximal average. This line of proof of this fact was carried out in \cite{BGLW}. Since in the present paper formula~\eqref{proximal conjugate}, up to an additive constant, was recovered by Theorem~\ref{mainresult}, and since essential strict convexity is not affected by the addition of a constant to the function, we see that our discussion here does, indeed, recover the dominance of essential strict convexity w.r.t. the proximal average.

Classically, when $\HH$ is finite-dimensional, a different path that leads to the same conclusion is now available. Indeed, in this case, recall that the proper, lower semicontinuous and convex function $f:\HH\to\RX$ is said to be essentially smooth if the interior of its domain is nonempty, if $f$ is G\^{a}teaux differentiable there and the norm of its G\^{a}teaux gradient blows up as we tend from the interior to a point on the boundary of its domain. $f$ is said to be essentially strictly convex if $f$ is strictly convex on every convex subset of $\dom\partial f$, which is equivalent to $\partial f$ being strictly monotone (see \cite[Theorem 12.17]{RockWets}). Thus, given that $f_{i_0}$ is essentially strictly convex, then $\partial f_{i_0}$ is strictly monotone. Setting $A_i=\partial f_i$ for every $i\in I$, Theorem~\ref{t:strmonodom} guarantees that $\partial p_\mu({\bf f},{\boldsymbol \lambda})=\average$ is strictly monotone and, consequently, that $p_\mu({\bf f},{\boldsymbol \lambda})$ is essentially strictly convex, as asserted. Summing up both of these discussions, we have recovered the following result:

\begin{corollary}\label{ess conv}\textbf{\emph{(essential strict convexity is dominant w.r.t. $p_\mu$)}} \emph{\cite[Corollary 7.8]{BGLW}}
Suppose that for each $i\in I$, $f_i:\HH\to\RX$ is proper, lower semicontinuous and convex. If there exists $i_0\in I$ such that $f_{i_0}$ is essentially strictly convex, then $p_\mu({\bf f},{\boldsymbol \lambda})$ is essentially strictly convex.
\end{corollary}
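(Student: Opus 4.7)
The plan is to imitate, in the reverse direction, the path used to establish that essential smoothness is dominant (Corollary~\ref{ess smooth}), exploiting the fact that essential strict convexity is defined through the conjugate. Concretely, the strategy is to translate the hypothesis on $f_{i_0}$ into a statement about $f_{i_0}^*$, apply the already established dominance of essential smoothness to the proximal average of the conjugates, and then pull the conclusion back through the conjugate formula \eqref{proximal conjugate} (which, in the present paper, has been recovered via Theorem~\ref{mainresult} up to an additive constant).

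In detail, I would first assume, for some $i_0\in I$, that $f_{i_0}$ is essentially strictly convex, so that by definition $f_{i_0}^*$ is essentially smooth. For each $i\in I$ the function $f_i^*$ is proper, lower semicontinuous and convex, so Corollary~\ref{ess smooth} applied to $\mathbf{f}^*=(f_1^*,\dots,f_n^*)$ with weight vector $\boldsymbol{\lambda}$ and parameter $\mu^{-1}$ yields that
\begin{equation*}
p_{\mu^{-1}}(\mathbf{f}^*,\boldsymbol{\lambda})\ \ \text{is essentially smooth.}
\end{equation*}
Next I would invoke the conjugacy identity \eqref{proximal conjugate}, namely $\bigl(p_\mu(\mathbf{f},\boldsymbol{\lambda})\bigr)^{*}=p_{\mu^{-1}}(\mathbf{f}^*,\boldsymbol{\lambda})$, valid up to an additive constant by the discussion following Theorem~\ref{mainresult}. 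Taking conjugates again and using the biconjugate theorem for proper lower semicontinuous convex functions gives $p_\mu(\mathbf{f},\boldsymbol{\lambda})=\bigl(p_{\mu^{-1}}(\mathbf{f}^*,\boldsymbol{\lambda})\bigr)^{*}$, again modulo a constant. Since essential smoothness is preserved under the addition of a constant, it follows immediately from the definition of essential strict convexity that $p_\mu(\mathbf{f},\boldsymbol{\lambda})$ is essentially strictly convex.

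The only subtle point, and the one I would treat with care, is the role of the additive constant: Theorem~\ref{mainresult} together with the fact that subdifferentials determine their antiderivatives up to constants recovers \eqref{proximal conjugate} only up to such a constant, not as a pointwise equality. The redeeming observation is that both properties involved, essential smoothness of a function and essential strict convexity, depend only on the behaviour of the subdifferential and its domain, and are invariant under shifts of the function by a real constant; hence no loss occurs. As a secondary (and independent) verification in the finite-dimensional case, I would briefly record the alternative route noted just above the statement: $f_{i_0}$ essentially strictly convex implies $\partial f_{i_0}$ strictly monotone (\cite[Theorem~12.17]{RockWets}); Theorem~\ref{t:strmonodom} then yields that $\partial p_\mu(\mathbf{f},\boldsymbol{\lambda})=\mathcal{R}_\mu(\boldsymbol{\partial}\mathbf{f},\boldsymbol{\lambda})$ is strictly monotone, which (in finite dimensions) is equivalent to essential strict convexity of $p_\mu(\mathbf{f},\boldsymbol{\lambda})$.
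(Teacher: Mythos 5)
Your proposal is correct and follows essentially the same route as the paper's own proof: the paper likewise deduces from the definition that $f_{i_0}^*$ is essentially smooth, applies Corollary~\ref{ess smooth} to the conjugates with parameter $\mu^{-1}$, and pulls the conclusion back through the conjugation formula~\eqref{proximal conjugate} via biconjugation, observing (exactly as you do) that the additive-constant ambiguity arising from Theorem~\ref{mainresult} is harmless because essential strict convexity is invariant under adding a constant. Even your secondary finite-dimensional verification via strict monotonicity of $\partial f_{i_0}$ and Theorem~\ref{t:strmonodom} mirrors the paper's own alternative argument in the discussion preceding the corollary.
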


Combining Corollary~\ref{ess smooth} together with Corollary~\ref{ess conv}, we recover the following result:

\begin{corollary}[Legendreness is dominant w.r.t. $p_\mu$]\emph{\cite[Corollary 7.9]{BGLW}}\label{Leg}
Suppose that for each $i\in I$, $f_i:\HH\to\RX$ is proper, lower semicontinuous and convex. If there exists $i_1\in I$ such that $f_{i_1}$ is essentially smooth and there exists $i_2\in I$ such that $f_{i_2}$ is essentially strictly convex, then $p_\mu({\bf f},{\boldsymbol \lambda})$ is Legendre. In particular, if there exists $i_0\in I$ such that $f_{i_0}$ is Legendre, then $p_\mu({\bf f},{\boldsymbol \lambda})$ is Legendre.
\end{corollary}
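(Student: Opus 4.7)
The plan is straightforward: since Legendreness is defined as the conjunction of essential smoothness and essential strict convexity, the corollary should follow by simply invoking the two preceding dominance results in parallel.

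First I would apply Corollary~\ref{ess smooth} with the index $i_1$: since $f_{i_1}$ is essentially smooth, the dominance of essential smoothness with respect to $p_\mu$ immediately yields that $p_\mu({\bf f},{\boldsymbol \lambda})$ is essentially smooth. Next, I would apply Corollary~\ref{ess conv} with the index $i_2$: since $f_{i_2}$ is essentially strictly convex, the dominance of essential strict convexity with respect to $p_\mu$ yields that $p_\mu({\bf f},{\boldsymbol \lambda})$ is essentially strictly convex. By the definition of Legendreness, the conjunction of these two properties gives that $p_\mu({\bf f},{\boldsymbol \lambda})$ is Legendre, which proves the main assertion.

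For the ``in particular'' claim, if some single $f_{i_0}$ is Legendre, then $f_{i_0}$ is simultaneously essentially smooth and essentially strictly convex, so one may take $i_1 = i_2 = i_0$ in the preceding argument to conclude that $p_\mu({\bf f},{\boldsymbol \lambda})$ is Legendre.

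There is no real obstacle here; the work has already been done in Corollaries~\ref{ess smooth} and~\ref{ess conv}, and the present statement is a purely formal consequence of combining them with the definition of a Legendre function. The only thing worth emphasizing in the write-up is that the two hypotheses may come from \emph{different} indices $i_1$ and $i_2$, so this is genuinely stronger than the special case in which a single $f_{i_0}$ is itself Legendre.
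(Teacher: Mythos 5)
Your proposal is correct and is exactly the paper's argument: the paper states this corollary immediately after the sentence ``Combining Corollary~\ref{ess smooth} together with Corollary~\ref{ess conv}, we recover the following result,'' i.e., it too obtains essential smoothness from the index $i_1$ via Corollary~\ref{ess smooth}, essential strict convexity from the index $i_2$ via Corollary~\ref{ess conv}, and concludes Legendreness by definition, with the Legendre case handled by $i_1=i_2=i_0$. Nothing is missing.
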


\subsection{Uniform monotonicity}

We say that the mapping $A:\HH\rightrightarrows\HH$ is monotone with modulus $\phi:[0,\infty[\to[0,\infty]$ if for every two points $(x,u)$ and $(y,v)$ in $\gr A$,
$$
\phi\big(\|x-y\|\big)\leq\scal{u-v}{x-y}.
$$  
Clearly, if $\gra A\neq\varnothing$, then $\phi(0)=0$. We recall that the mapping $A:\HH\rightrightarrows\HH$ is said to be \emph{uniformly monotone} with modulus $\phi$, $\phi$-uniformly monotone for short, if $A$ is monotone with modulus $\phi$ and $\phi(t)=0 \Leftrightarrow t=0$. We say that the mapping $T:\HH\to\HH$ is firmly nonexpansive with modulus $\phi:[0,\infty[\to[0,\infty]$ if for every pair of points $x$ and $y$ in $\HH$,
$$
\|Tx-Ty\|^2+\phi\big(\|Tx-Ty\|\big)\leq\scal{Tx-Ty}{x-y}.
$$
We also recall that the mapping $T:\HH\to\HH$ is said to be \emph{uniformly firmly nonexpansive} with modulus $\phi$, $\phi$-uniformly firmly nonexpansive for short, if $T$ is firmly nonexpansive with modulus $\phi$ and $\phi(t)=0 \Leftrightarrow t=0$. For the sake of convenience, we will identify a modulus $\phi:[0,\infty[\to[0,\infty]$ with $\phi(|\cdot|)$, its symmetric extension to $\RR$. With this convention, when we say that $\phi$ is increasing we mean that $0\leq t_1<t_2\ \Rightarrow\ \phi(t_1)\leq\phi(t_2)$.

\begin{proposition}\label{proximal average of positive functions which vanish at zero}
Suppose that for each $i\in I$, $f_i:\HH\to[0,\infty]$ is proper, lower semicontinuous, convex and $f_i(0)=0$. Then $p_\mu({\bf f},{\boldsymbol \lambda}):\HH\to[0,\infty]$ is proper, lower semicontinuous, convex and $p_\mu({\bf f},{\boldsymbol \lambda})(0)=0$. If there exists $i_0\in I$ such that $f_{i_0}(x)=0\Leftrightarrow x=0$, then $p_\mu({\bf f},{\boldsymbol \lambda})(x)=0\Leftrightarrow x=0$.
\end{proposition}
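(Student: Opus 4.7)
The plan is to work directly with the reformulation of the proximal average given in Fact~\ref{resolvents of subs and proximal reformulation}, namely
\[
p_\mu({\bf f},\boldsymbol{\lambda})(x)=\inf_{\sum_{i\in I}\lambda_i y_i=x}\sum_{i\in I}\lambda_i\Big(f_i(y_i)+\tfrac{1}{\mu}q(y_i)\Big)-\tfrac{1}{\mu}q(x).
\]
The structural assertions (proper, lower semicontinuous, convex) are already guaranteed by Fact~\ref{resolvents of subs and proximal reformulation}, so the only thing to verify is the range $[0,\infty]$, the vanishing at $0$, and the uniqueness of the zero under the extra hypothesis.

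First I would show $p_\mu({\bf f},{\boldsymbol \lambda})(0)=0$. The choice $y_i=0$ for all $i\in I$ is admissible in the infimum for $x=0$ and yields the value $\sum_{i\in I}\lambda_i f_i(0)+\tfrac{1}{\mu}(\sum_{i\in I}\lambda_i q(0)-q(0))=0$, providing the upper bound $p_\mu({\bf f},{\boldsymbol \lambda})(0)\le 0$. Non-negativity at any $x\in\HH$ (which simultaneously gives the matching lower bound at $x=0$) follows because for any admissible $(y_i)_{i\in I}$ with $\sum_{i\in I}\lambda_i y_i=x$, Jensen's inequality applied to the strictly convex function $q$ yields $q(x)\le\sum_{i\in I}\lambda_i q(y_i)$, while $f_i\ge 0$ for each $i$; hence each summand in the displayed infimum is non-negative.

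For the equivalence $p_\mu({\bf f},{\boldsymbol \lambda})(x)=0\iff x=0$, the key observation is that the objective of the infimum, viewed as a function of $(y_i)_{i\in I}\in\HH^n$ on the affine subspace $\{\sum_{i\in I}\lambda_i y_i=x\}$, is proper, lower semicontinuous and strongly convex (because of the quadratic terms $\tfrac{1}{\mu}q(y_i)$), hence coercive on this subspace; therefore the infimum is attained at a unique minimizer $(y_i^\star)_{i\in I}$. If $p_\mu({\bf f},{\boldsymbol \lambda})(x)=0$, then both non-negative pieces must vanish at this minimizer: $\sum_{i\in I}\lambda_i f_i(y_i^\star)=0$ forces $f_i(y_i^\star)=0$ for every $i$ (since each $\lambda_i>0$ and $f_i\ge 0$), and $\sum_{i\in I}\lambda_i q(y_i^\star)=q(\sum_{i\in I}\lambda_i y_i^\star)$ combined with the strict convexity of $q$ forces $y_1^\star=\cdots=y_n^\star$. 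The constraint $\sum_{i\in I}\lambda_i y_i^\star=x$ then gives $y_i^\star=x$ for all $i$, and in particular $f_{i_0}(x)=0$, from which the hypothesis on $f_{i_0}$ yields $x=0$.

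The only potential obstacle is making the attainment argument rigorous — strictly speaking one must check that the infimum is finite (otherwise the chain of equalities is vacuous), but the hypothesis $p_\mu({\bf f},{\boldsymbol \lambda})(x)=0$ supplies precisely that, so strong convexity on the closed affine constraint set delivers a unique minimizer and the rest of the argument proceeds by cleanly separating the two non-negative contributions.
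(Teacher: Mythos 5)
Your proposal is correct, and for the second assertion it takes a genuinely different route from the paper. The non-negativity and vanishing at the origin are handled the same way in both proofs (working from the reformulation in Fact~\ref{resolvents of subs and proximal reformulation} and noting that both the $f_i$-terms and the bracketed quadratic term are non-negative). For the equivalence $p_\mu({\bf f},{\boldsymbol \lambda})(x)=0\iff x=0$, however, the paper does not argue variationally at all: it sets $A_i=\partial f_i$, observes that $0$ being a common minimizer gives $\bigcap_{i\in I}A_i^{-1}(0)\neq\varnothing$ with $\{0\}=A_{i_0}^{-1}(0)=\bigcap_{i\in I}A_i^{-1}(0)$, and then invokes the common-solutions result~\eqref{common inverse solution} together with~\eqref{sub of proximal average} to conclude that $\partial p_\mu({\bf f},{\boldsymbol \lambda})^{-1}(0)=\{0\}$, i.e.\ that $0$ is the unique minimizer. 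Your argument instead stays entirely inside convex analysis: you show the infimum defining $p_\mu({\bf f},{\boldsymbol \lambda})(x)$ is attained (strong convexity of the objective on the closed affine constraint set, with finiteness supplied by the hypothesis $p_\mu({\bf f},{\boldsymbol \lambda})(x)=0$), and then split the two non-negative pieces at the minimizer: vanishing of the quadratic gap plus strict convexity of $q$ forces $y_1^\star=\cdots=y_n^\star=x$, and then $f_{i_0}(x)=0$ forces $x=0$. The paper's route is shorter given the resolvent-average machinery it has already built (the fixed-point result behind~\eqref{common inverse solution}), and it situates the proposition naturally within that framework; your route is more elementary and self-contained — it needs no maximal monotonicity, no fixed-point argument, and it actually yields slightly more information, namely that the infimum is attained at $y_i^\star=x$ for every $i$ and that \emph{all} the $f_i$ (not just $f_{i_0}$) vanish at $x$. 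One could even bypass attainment by taking a minimizing sequence $(y_i^k)$: the quadratic gap forces $y_i^k\to x$ for each $i$, and lower semicontinuity of $f_{i_0}$ gives $f_{i_0}(x)\leq\liminf_k f_{i_0}(y_{i_0}^k)=0$; but your attainment argument is sound as stated.
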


\begin{proof}
Fact~\ref{resolvents of subs and proximal reformulation} guarantees that $p_\mu(\boldsymbol{f},\boldsymbol{\lambda})$ is lower semicontinuous, convex and that for every $x\in\HH$,
\begin{equation}\label{applied proximal reformulation}
p_\mu(\bold{f},\boldsymbol{\lambda})(x)=\inf_{\sum_{i\in I}\lambda_j y_j=x}\sum_{i\in I}\lambda_if_i(y_i)+\frac{1}{\mu}\bigg(\Big(\sum_{i\in I}\lambda_iq(y_i)\Big)-q(x)\bigg).
\end{equation}
Since the bracketed term in~\eqref{applied proximal reformulation} is greater or equal to zero and vanishes when for each $i\in I$, $y_i=x=0$, and since for each $i\in I$, $f_i$ is a function which is greater or equal to zero and which vanishes at zero, formula~\eqref{applied proximal reformulation} implies that $p_\mu(\boldsymbol{f},\boldsymbol{\lambda})$ is greater or equal to zero and vanishes at zero. Finally, for each $i\in I$, we now set $A_i=\partial f_i$. Since for each $i\in I$, 0 is a minimizer of $f_i$, we see that $\bigcap_{i\in I}A_i^{-1}(0)\neq\varnothing$. If $f_{i_0}(x)=0\Leftrightarrow x=0$, then $\{0\}=A^{-1}_{i_0}(0)=\bigcap_{i\in I}A_i^{-1}(0)$. Consequently, equation~\eqref{common inverse solution} together with equation~\eqref{sub of proximal average} imply that $\partial p_\mu(\boldsymbol{f},\boldsymbol{\lambda})^{-1}(0)=\mathcal{R}_\mu(\bold{A},\boldsymbol{\lambda})^{-1}(0)=\{0\}$. We conclude that 0 is the only minimizer of $p_\mu(\boldsymbol{f},\boldsymbol{\lambda})$ and, therefore, $p_\mu(\boldsymbol{f},\boldsymbol{\lambda})(x)=0\Leftrightarrow x=0$.
\end{proof}

\begin{proposition}\label{uniform firmly dominance}
Suppose that for each $i\in I$, $T_i:\HH\to\HH$ is firmly nonexpansive with modulus $\phi_i$ which is lower semicontinuous and convex and set $T=\sum_{i\in I}\lambda_i T_i$. Then $T$ is firmly nonexpansive with modulus $\phi=p_{\frac{1}{2}}(\boldsymbol{\phi},\boldsymbol{\lambda})$ which is proper, lower semicontinuous and convex. In particular, if there exists $i_0\in I$ such that $T_{i_0}$ is $\phi_{i_0}$-uniformly firmly nonexpansive, then $T$ is $\phi$-uniformly firmly nonexpansive.
\end{proposition}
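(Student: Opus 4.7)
The plan is to verify directly the inequality
\begin{equation*}
\|Tx-Ty\|^2 + \phi(\|Tx-Ty\|) \leq \scal{Tx-Ty}{x-y}
\end{equation*}
by expanding $\|Tx-Ty\|^2$ through identity~\eqref{normstrongconvex}, invoking the modulus $\phi_i$ of each $T_i$, and then bounding the resulting residual from below by $\phi=p_{1/2}(\boldsymbol{\phi},\boldsymbol{\lambda})$. Setting $a_i:=T_ix-T_iy$, identity~\eqref{normstrongconvex} gives
\begin{equation*}
\|Tx-Ty\|^2 = \Big\|\sum_{i\in I}\lambda_i a_i\Big\|^2 = \sum_{i\in I}\lambda_i\|a_i\|^2 - \tfrac{1}{2}\sum_{(i,j)\in I\times I}\lambda_i\lambda_j\|a_i-a_j\|^2,
\end{equation*}
and substituting $\|a_i\|^2 \leq \scal{a_i}{x-y} - \phi_i(\|a_i\|)$ into this identity yields $\|Tx-Ty\|^2 \leq \scal{Tx-Ty}{x-y} - R$, where
\begin{equation*}
R := \sum_{i\in I}\lambda_i\phi_i(\|a_i\|) + \tfrac{1}{2}\sum_{(i,j)\in I\times I}\lambda_i\lambda_j\|a_i-a_j\|^2.
\end{equation*}
It therefore suffices to establish $\phi(\|Tx-Ty\|) \leq R$.

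Viewing each $\phi_i$ as a proper, lower semicontinuous, convex function on $\RR$ via its symmetric extension, Fact~\ref{resolvents of subs and proximal reformulation} (applied with $\mu=1/2$) together with identity~\eqref{normstrongconvex} applied in $\RR$ transforms the defining infimum of $p_{1/2}(\boldsymbol{\phi},\boldsymbol{\lambda})$ into
\begin{equation*}
\phi(t) = \inf_{\sum_{i\in I}\lambda_i y_i = t}\Big\{\sum_{i\in I}\lambda_i\phi_i(y_i) + \tfrac{1}{2}\sum_{(i,j)\in I\times I}\lambda_i\lambda_j(y_i-y_j)^2\Big\}.
\end{equation*}
If $\sum_{i\in I}\lambda_i\|a_i\|=0$ then $Tx=Ty$ and Proposition~\ref{proximal average of positive functions which vanish at zero} gives $\phi(0)=0$, so the inequality holds trivially. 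Otherwise, set $\alpha:=\|Tx-Ty\|/\sum_{i\in I}\lambda_i\|a_i\|\in[0,1]$ and $y_i:=\alpha\|a_i\|$; by construction $\sum_{i\in I}\lambda_i y_i=\|Tx-Ty\|$, so $(y_i)$ is feasible for the infimum. Since each $\phi_i$ is nonnegative and convex with $\phi_i(0)=0$, its restriction to $[0,\infty[$ is nondecreasing, whence $\phi_i(y_i)\leq\phi_i(\|a_i\|)$; moreover $(y_i-y_j)^2=\alpha^2(\|a_i\|-\|a_j\|)^2\leq\|a_i-a_j\|^2$ by $\alpha\leq1$ and the reverse triangle inequality. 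Summing yields the candidate value $\sum_i\lambda_i\phi_i(y_i)+\tfrac12\sum_{i,j}\lambda_i\lambda_j(y_i-y_j)^2 \leq R$, so $\phi(\|Tx-Ty\|)\leq R$.

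Finally, $\phi=p_{1/2}(\boldsymbol{\phi},\boldsymbol{\lambda})$ is proper, lower semicontinuous, and convex by Fact~\ref{resolvents of subs and proximal reformulation}, and the ``in particular'' clause follows from Proposition~\ref{proximal average of positive functions which vanish at zero}, which yields $\phi(t)=0\iff t=0$ whenever some $\phi_{i_0}$ enjoys this property. The main obstacle is engineering the bound $\phi(\|Tx-Ty\|)\leq R$: the choice $\mu=1/2$ is dictated by the need for the coefficient of the quadratic penalty in the $p_\mu$-formula to equal the $\tfrac{1}{2}$ appearing in $R$, and the scaling trick $y_i=\alpha\|a_i\|$ with $\alpha\leq 1$ simultaneously controls the $\phi_i$-terms (via monotonicity of each $\phi_i$ on $[0,\infty[$) and the quadratic terms (via $\alpha^2\leq 1$ and the reverse triangle inequality).
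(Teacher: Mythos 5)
Your proof is correct, and it shares the paper's skeleton: both arguments hinge on the reformulation of $p_{\frac{1}{2}}(\boldsymbol{\phi},\boldsymbol{\lambda})$ from Fact~\ref{resolvents of subs and proximal reformulation}, on Proposition~\ref{proximal average of positive functions which vanish at zero} for nonnegativity and the ``vanishes only at zero'' clause, and on evaluating the resulting infimum at a feasible point built from the numbers $\|T_ix-T_iy\|$ before invoking the firm nonexpansiveness of each $T_i$ with modulus $\phi_i$. The execution of the key estimate differs, though. The paper first bounds $\phi(\|Tx-Ty\|)\leq\phi\big(\sum_{i}\lambda_i\|T_ix-T_iy\|\big)$ --- which requires the auxiliary observation that the averaged modulus $\phi$ itself is increasing (deduced from its convexity, nonnegativity and vanishing at zero) --- then tests the infimum at $t_i=\|T_ix-T_iy\|$ and finishes with $\big(\sum_{i}\lambda_i\|T_ix-T_iy\|\big)^2\geq\|Tx-Ty\|^2$. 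You never need monotonicity of $\phi$: you rescale the test point to $y_i=\alpha\|T_ix-T_iy\|$ with $\alpha=\|Tx-Ty\|/\sum_{i}\lambda_i\|T_ix-T_iy\|\in[0,1]$, so that it is feasible for the exact argument $\|Tx-Ty\|$, and you instead use monotonicity of each individual $\phi_i$ together with $\alpha^2\big(\|a_i\|-\|a_j\|\big)^2\leq\|a_i-a_j\|^2$; rewriting both $\|Tx-Ty\|^2$ and the quadratic penalty in the $p_{1/2}$-formula via the pairwise identity~\eqref{normstrongconvex} then makes the comparison of quadratic terms term-by-term. The two routes yield the same bound $\phi(\|Tx-Ty\|)\leq R$ and the same conclusion; yours trades the paper's shorter chain of inequalities for independence from the monotonicity of the averaged modulus, and both are complete.
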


\begin{proof}
Fact~\ref{resolvents of subs and proximal reformulation} guarantees that $\phi=p_{\frac{1}{2}}(\boldsymbol{\phi},\boldsymbol{\lambda})$ is lower semicontinuous, convex and that for every $t\in[0,\infty[$,
\begin{equation}\label{real line proximal average reformulation}
\phi(t)=p_{\frac{1}{2}}(\boldsymbol{\phi},\boldsymbol{\lambda})(t)=\inf_{\sum_{i\in I}\lambda_i t_i=t}\sum_{i\in I}\lambda_i\phi_i(t_i)+2\bigg(\Big(\sum_{i\in I}\lambda_iq(t_i)\Big)-q(t)\bigg).
\end{equation}
Proposition~\ref{proximal average of positive functions which vanish at zero} guarantees that $\phi$ is greater or equal to zero and vanishes at zero. Furthermore, if $\phi_{i_0}$ vanishes only at zero, then $\phi$ vanishes only at zero. Since $\phi$ is convex, we now see that it is increasing. Now, let $x$ and $y$ be points in $\HH$. Then by employing formula~\eqref{real line proximal average reformulation} we obtain the following evaluation
\begin{align*}
\phi\big(\|Tx-Ty\|\big)&\leq\phi\Big(\sum_{i\in I}\lambda_i\|T_ix-T_iy\|\Big)\nonumber\\
&\leq\sum_{i\in I}\lambda_i\phi_i\big(\|T_ix-T_iy\|\big)+\sum_{i\in I}\lambda_i\|T_ix-T_iy\|^2-\Big(\sum_{i\in I}\lambda_i\|T_ix-T_iy\|\Big)^2\nonumber\\
&\leq\sum_{i\in I}\lambda_i\phi_i\big(\|T_ix-T_iy\|\big)+\sum_{i\in I}\lambda_i\|T_ix-T_iy\|^2-\Big\|\sum_{i\in I}\lambda_i(T_ix-T_iy)\Big\|^2
\end{align*}
which implies that
\begin{align*}
\phi\big(\|Tx-Ty\|\big)+\|Tx-Ty\|^2&\leq\sum_{i\in I}\lambda_i\Big(\phi_i\big(\|T_ix-T_iy\|\big)+\|T_ix-T_iy\|^2\Big)\\
&\leq\sum_{i\in I}\lambda_i\scal{T_ix-T_iy}{x-y}=\scal{Tx-Ty}{x-y}.
\end{align*}
Thus, we see that $T$ is firmly nonexpansive with modulus $\phi$. 
\end{proof}

As a consequence, we obtain the following result:

\begin{theorem}\label{uniform monotonicity with 1-coercive modulus is recessive}
\textbf{\emph{(uniform monotonicity with a convex modulus is dominant)}}
Suppose that for each $i\in I$, $A_i:\HH\rightrightarrows\HH$ is maximally monotone with modulus $\phi_i$ which is lower semicontinuous and convex. Then $\average$ is monotone with modulus $\phi=p_{\frac{\mu}{2}}(\boldsymbol{\phi},\boldsymbol{\lambda})$ which is lower semicontinuous and convex. In particular, if there exists $i_0\in I$ such that $A_{i_0}$ is $\phi_{i_0}$-uniformly monotone, then $\average$ is $\phi$-uniformly monotone.
\end{theorem}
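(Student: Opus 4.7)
The plan is to reduce this theorem for monotone operators to the statement for firmly nonexpansive mappings established in Proposition~\ref{uniform firmly dominance}, using Minty's parametrization to pass between an operator and its resolvent, and then tracking the scaling through $\mu$.

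The first step is to establish the equivalence: a maximally monotone mapping $A\colon\HH\rightrightarrows\HH$ is monotone with modulus $\phi$ if and only if its resolvent $J_A$ is firmly nonexpansive with modulus $\phi$. Indeed, since $A$ is maximally monotone, Minty's parametrization~\eqref{Minty's parametrization} identifies every pair $(x,u)\in\gra A$ with $(J_Az,z-J_Az)$ for $z=x+u\in\HH$. Writing out $\phi(\|x-y\|)\leq\scal{u-v}{x-y}$ in these coordinates with $z,w\in\HH$ gives
$$
\phi\big(\|J_Az-J_Aw\|\big)\leq\scal{z-w}{J_Az-J_Aw}-\|J_Az-J_Aw\|^2,
$$
which, upon rearrangement, is exactly the defining inequality of firm nonexpansiveness of $J_A$ with modulus $\phi$. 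Reversing the argument yields the converse.

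Next I would scale by $\mu$. A direct computation shows that if $A_i$ is monotone with lsc convex modulus $\phi_i$, then $\mu A_i$ is monotone with modulus $\mu\phi_i$, which is again lsc and convex. By the equivalence above, each $J_{\mu A_i}$ is firmly nonexpansive with modulus $\mu\phi_i$. Proposition~\ref{uniform firmly dominance} then applies to the family $\{J_{\mu A_i}\}_{i\in I}$ and yields that
$$
J_{\mu\average}=\sum_{i\in I}\lambda_i J_{\mu A_i}
$$
is firmly nonexpansive with the lsc convex modulus $p_{1/2}(\mu\boldsymbol\phi,\boldsymbol\lambda)$.

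Applying the equivalence in the other direction, $\mu\average$ is monotone with modulus $p_{1/2}(\mu\boldsymbol\phi,\boldsymbol\lambda)$, whence $\average$ is monotone with modulus
$$
\mu^{-1}p_{1/2}(\mu\boldsymbol\phi,\boldsymbol\lambda).
$$
Invoking the scaling identity~\eqref{scaled proximal average}, $p_\alpha({\bf f},\boldsymbol\lambda)=\alpha^{-1}p(\alpha{\bf f},\boldsymbol\lambda)$, we see that both $\mu^{-1}p_{1/2}(\mu\boldsymbol\phi,\boldsymbol\lambda)$ and $p_{\mu/2}(\boldsymbol\phi,\boldsymbol\lambda)$ equal $(2/\mu)\,p((\mu/2)\boldsymbol\phi,\boldsymbol\lambda)$, so they coincide. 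This produces the asserted modulus $\phi=p_{\mu/2}(\boldsymbol\phi,\boldsymbol\lambda)$, which is lsc and convex by Fact~\ref{resolvents of subs and proximal reformulation}. For the final ``in particular'' assertion, if some $\phi_{i_0}$ vanishes only at $0$, then so does $\mu\phi_{i_0}$, and Proposition~\ref{uniform firmly dominance} (relying on Proposition~\ref{proximal average of positive functions which vanish at zero}) already shows that $p_{1/2}(\mu\boldsymbol\phi,\boldsymbol\lambda)$, and therefore $\phi=p_{\mu/2}(\boldsymbol\phi,\boldsymbol\lambda)$, vanishes only at $0$; hence $\average$ is $\phi$-uniformly monotone.

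The only delicate step is bookkeeping of the parameter $\mu$: one must be careful to apply Proposition~\ref{uniform firmly dominance} to the \emph{resolvents} $J_{\mu A_i}$ (with the scaled moduli $\mu\phi_i$) and then extract the correct modulus on $\average$ via~\eqref{scaled proximal average}. Once this accounting is handled, the rest of the argument is a transparent transfer between operator-side and resolvent-side moduli.
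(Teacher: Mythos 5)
Your proof is correct and follows essentially the same route as the paper's: both rest on the Minty-parametrization equivalence between $\phi$-monotonicity of a maximally monotone $A$ and firm nonexpansiveness of $J_A$ with modulus $\phi$, an application of Proposition~\ref{uniform firmly dominance}, and the scaling identity~\eqref{scaled proximal average} to arrive at $\phi=p_{\mu/2}(\boldsymbol{\phi},\boldsymbol{\lambda})$. The only (immaterial) difference is bookkeeping order: the paper proves the case $\mu=1$ and then rescales via~\eqref{scaled average}, whereas you scale at the outset by working with $J_{\mu A_i}$ and the moduli $\mu\phi_i$ directly.
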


\begin{proof}
First we consider the case $\mu=1$. To this end we will employ the fact that the maximally monotone mapping $A:\HH\rightrightarrows\HH$ is monotone with modulus $\phi$ if and only if $J_A$ is firmly nonexpansive with modulus $\phi$. Indeed, employing Minty's parametrization \eqref{Minty's parametrization}, we see that $A$ is uniformly monotone with modulus $\phi$ if and only if for every $x$ and $y$ in $\HH$ we have 
$
\phi\big(\|J_Ax-J_Ay\|\big)\leq\scal{J_Ax-J_Ay}{(x-y)-(J_Ax-J_Ay)}
$
which is precisely the firm nonexpansiveness of $J_A$ with modulus $\phi$. Thus, we see that for each $i\in I$, $J_{A_i}$ is firmly nonexpansive with modulus $\phi_i$. Consequently, Proposition~\ref{uniform firmly dominance} guarantees that $J_{\averageonelambda}=\sum_{i\in I}\lambda_i J_{A_i}$ is firmly nonexpansive with modulus $\phi=p_{\frac{1}{2}}(\boldsymbol{\phi},\boldsymbol{\lambda})$, which, in turn, implies that $\averageonelambda$ is monotone with modulus $\phi$. For an arbitrary $0<\mu$, we employ formulae~\eqref{scaled average} and \eqref{scaled proximal average} as follows: any mapping $A:\HH\rightrightarrows\HH$ is $\phi$-monotone if and only if $\mu A$ is $\mu\phi$-monotone. Thus, since we already have that $\mathcal{R}(\mu\bold{A},\boldsymbol{\lambda})$ is $p_{\frac{1}{2}}(\mu\boldsymbol{\phi},\boldsymbol{\lambda})$-monotone, then $\average=\mu^{-1}\mathcal{R}(\mu\bold{A},\boldsymbol{\lambda})$ is $\phi$-monotone where $\phi=\mu^{-1} p_{\frac{1}{2}}(\mu\boldsymbol{\phi},\boldsymbol{\lambda})=\frac{2}{\mu} p(\frac{\mu}{2}\boldsymbol{\phi},\boldsymbol{\lambda})=p_{\frac{\mu}{2}}(\boldsymbol{\phi},\boldsymbol{\lambda})$. Finally, Proposition~\ref{proximal average of positive functions which vanish at zero} guarantees that if $\phi_{i_0}$ vanishes only at zero, then $\phi$ vanishes only at zero.
\end{proof}

We recall (see~\cite[Section 3.5]{Zal}) that the proper function $f:\HH\to\RX$ is said to be \emph{uniformly convex} if there exists a function $\phi:[0,\infty[\to[0,\infty]$ with the property $\phi(t)=0 \Leftrightarrow t=0$ such that for every two points $x$ and $y$ in $\HH$ and every $\lambda\in\ ]0,1[$,
\begin{equation}\label{uniform convexity}
f\big((1-\lambda)x+\lambda y\big)+\lambda(1-\lambda)\phi\big(\|x-y\|\big)\leq(1-\lambda)f(x)+\lambda f(y).
\end{equation}
The largest possible function $\phi$ satisfying~\eqref{uniform convexity} is called the \emph{gauge of uniform convexity} of $f$ and is defined by
$$
\phi_f(t)=\inf\bigg\{\frac{(1-\lambda)f(x)+\lambda f(y)-f((1-\lambda)x+\lambda y)}{\lambda(1-\lambda)}\ \bigg|\ \lambda\in\ ]0,1[,\ x,y\in\dom f,\ \|x-y\|=t \bigg\}. 
$$
We also recall that the proper and convex function $f:\HH\to\RX$ is said to be \emph{uniformly smooth} if there exists a function $\psi:[0,\infty[\to[0,\infty]$ with the property $\lim_{t\to 0}{\psi(t)}/{t}=0$ such that for every two points $x$ and $y$ in $\HH$ and every $\lambda\in\ ]0,1[$ such that $(1-\lambda)x+\lambda y\in\dom f$,
\begin{equation}\label{uniform smoothness}
f\big((1-\lambda)x+\lambda y\big)+\lambda(1-\lambda)\psi\big(\|x-y\|\big)\geq(1-\lambda)f(x)+\lambda f(y).
\end{equation}
The smallest possible function $\psi$ satisfying~\eqref{uniform smoothness} is called the \emph{gauge of uniform smoothness} of $f$ and is defined by
$$
\psi_f(t)=\sup\bigg\{\frac{(1-\lambda)f(x)+\lambda f(y)-f((1-\lambda)x+\lambda y)}{\lambda(1-\lambda)}\ \bigg|\ \begin{array}{c}
\lambda\in\ ]0,1[,\ x,y\in\HH,\ \|x-y\|=t, \\ 
\big((1-\lambda)x+\lambda y\big)\in\dom f
\end{array}\  \bigg\}. 
$$

\begin{fact}\label{uniform convexity equivalent to uniform monotonicity}\emph{\cite[Theorem 3.5.10, (i)$\Leftrightarrow$(v)]{Zal}}
Suppose that $f:\HH\to\RX$ is proper, lower semicontinuous and convex. Then $f$ is uniformly convex if and only if $\partial f$ is uniformly monotone, in which case $\partial f$ is $\phi$-uniformly monotone with $\phi=2\phi_f^{**}$. 
\end{fact}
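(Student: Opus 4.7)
The plan is to prove the two implications separately and then identify the precise modulus $2\phi_f^{**}$ at the end. For the forward direction, I assume $f$ is uniformly convex and fix $(x,u),(y,v)\in\gra\partial f$. For $\lambda\in\ ]0,1[$, let $z_\lambda=(1-\lambda)x+\lambda y$. The definition of the gauge $\phi_f$ gives $f(z_\lambda)+\lambda(1-\lambda)\phi_f(\|x-y\|)\le(1-\lambda)f(x)+\lambda f(y)$, while the subgradient inequality at $x$ yields $f(z_\lambda)\ge f(x)+\lambda\scal{u}{y-x}$. Combining, dividing by $\lambda$ and sending $\lambda\to 0^+$ produces $\scal{u}{y-x}\le f(y)-f(x)-\phi_f(\|x-y\|)$, and a symmetric argument using $v\in\partial f(y)$ gives $\scal{v}{x-y}\le f(x)-f(y)-\phi_f(\|x-y\|)$. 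Adding the two inequalities yields $\scal{u-v}{x-y}\ge 2\phi_f(\|x-y\|)\ge 2\phi_f^{**}(\|x-y\|)$.

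For the converse, I suppose $\partial f$ is $\psi$-uniformly monotone and fix $x,y\in\dom f$. The function $\varphi(t)=f(x+t(y-x))$ is convex on $[0,1]$, hence differentiable almost everywhere with $\varphi'(t)=\scal{s(t)}{y-x}$ for a measurable selection $s(t)\in\partial f(x+t(y-x))$ wherever the subdifferential is nonempty. Uniform monotonicity applied to $s(t)$ and $s(s)$ yields $(t-s)(\varphi'(t)-\varphi'(s))\ge\psi((t-s)\|x-y\|)$, a quantitative monotonicity of $\varphi'$. Comparing the chord $(1-\lambda)\varphi(0)+\lambda\varphi(1)$ with $\varphi(\lambda)$ via the fundamental theorem of calculus applied to $\varphi'$ produces a bound of the form $(1-\lambda)f(x)+\lambda f(y)-f(z_\lambda)\ge\lambda(1-\lambda)\tilde\phi(\|x-y\|)$ for a modulus $\tilde\phi$ built explicitly from $\psi$. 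Density of $\dom\partial f$ in $\dom f$ together with the lower semicontinuity of $f$ propagates the inequality to the full domain, establishing uniform convexity.

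The delicate step is pinning down the modulus as precisely $2\phi_f^{**}$ rather than merely $2\phi_f$. The inequality $\scal{u-v}{x-y}\ge 2\phi_f(\|x-y\|)$ established above automatically implies $\scal{u-v}{x-y}\ge 2\phi_f^{**}(\|x-y\|)$ since $\phi_f\ge\phi_f^{**}$; what must be checked is that the normalization $\phi_f^{**}(t)=0\iff t=0$ is preserved. The nontrivial implication is that $t>0\Rightarrow\phi_f^{**}(t)>0$, and here I expect the main obstacle: one must exploit that $\phi_f$ is nondecreasing on $\RP$ and strictly positive on $\RPP$, and argue that its convex lower semicontinuous envelope on $\RR$ cannot vanish on any interval $[0,t_0]$ with $t_0>0$ without forcing, via the supporting affine minorants that build the biconjugate, that $\phi_f$ itself vanishes there, contradicting uniform convexity. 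This transfer of positivity through the biconjugate is the subtle technical point of the proof.
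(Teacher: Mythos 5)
First, a point of reference: the paper gives no proof of this statement at all --- it is a Fact imported verbatim from Z\u{a}linescu's book (the citation \cite[Theorem~3.5.10]{Zal} in the statement), so your attempt has to stand on its own rather than be compared to an internal argument. Your forward direction is correct and standard, and it even gives the stronger conclusion that $\partial f$ is monotone with modulus $2\phi_f$; the equivalence ``$f$ uniformly convex $\Leftrightarrow$ $\partial f$ uniformly monotone'' is not where the difficulty lies. The genuine gap is exactly at the step you flag as delicate, and the argument you propose there fails. You use only two properties of $\phi_f$ --- nondecreasing on $\RP$, strictly positive on $\RPP$ --- and claim these force $\phi_f^{**}>0$ on $\RPP$. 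That implication is false: take $\phi(t)=\sqrt{|t|}$. It is nondecreasing on $\RP$ and vanishes only at $0$, yet if $g$ is any convex minorant with $g(0)\le 0$ and $g(t_0)>0$ for some $t_0>0$, monotonicity of difference quotients gives $g(t)\ge g(t_0)\,t/t_0$ for $t\ge t_0$, which eventually exceeds $\sqrt{t}$; hence every convex minorant is nonpositive and $\phi^{**}\equiv 0$. So positivity can be completely destroyed by the biconjugate for functions having precisely the features your sketch invokes, and no argument using only those features can close the gap. What rescues the Fact is a finer structural property of gauges of uniform convexity, due to Z\u{a}linescu \cite[Section~3.5]{Zal}: $t\mapsto\phi_f(t)/t$ is nondecreasing on $\RPP$. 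Granting this, positivity transfers by an explicit affine minorant: fix $t_0>0$, choose $s\in\RPP$ with $s\le t_0/2$ and $\phi_f(s)<\infty$ (if no such $s$ exists the claim is trivial), and set $\ell(t)=\frac{\phi_f(s)}{s}(t-s)$. Then $\ell\le 0\le\phi_f$ on $[0,s]$, while for $t\ge s$ one has $\phi_f(t)\ge\frac{\phi_f(s)}{s}\,t\ge\ell(t)$, and $\ell$ is negative on the negative half-line, so $\ell$ minorizes the symmetric extension of $\phi_f$ and $\phi_f^{**}(t_0)\ge\ell(t_0)\ge\phi_f(s)>0$.

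Two cautions about your converse direction as well. In an infinite-dimensional $\HH$ the selection $s(t)\in\partial f\big(x+t(y-x)\big)$ need not exist: the one-dimensional restriction $\varphi$ has $\partial\varphi(t)\neq\varnothing$ on $\left]0,1\right[$, but points of the open segment can fail to belong to $\dom\partial f$, and the chain rule only gives the inclusion $\scal{\partial f(x+t(y-x))}{y-x}\subseteq\partial\varphi(t)$, not nonemptiness of the left-hand side. The standard repairs are either to run the estimate only on pairs taken from $\gra\partial f$ and then propagate to all of $\dom f$ via Br{\o}ndsted--Rockafellar approximation (which controls function values) combined with lower semicontinuity, or to dualize through $f^*$ and the conjugation duality (the paper's Fact~\ref{gage conjugation}); your closing remark about density of $\dom\partial f$ points in the right direction, but as written it is attached to an integration argument whose integrand may not exist. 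Finally, note that the converse only requires \emph{some} admissible modulus, so all of the delicacy of the Fact is concentrated in the normalization $\phi_f^{**}(t)=0\Leftrightarrow t=0$ discussed above.
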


Consequently, we arrive at the following results:

\begin{theorem}\label{uniform convexity is dominant or recessive}
\textbf{\emph{(uniform convexity is dominant w.r.t. $p_\mu$)}}
Suppose that for each $i\in I$, $f_i:\HH\to\RX$ is proper, lower semicontinuous, convex and there exists $i_0\in I$ such that $f_{i_0}$ is uniformly convex. Then $p_\mu({\bf f},{\boldsymbol \lambda})$ is uniformly convex.
\end{theorem}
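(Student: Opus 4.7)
The plan is to translate the claim into the language of monotone operators, invoke the dominance of uniform monotonicity already established in Theorem~\ref{uniform monotonicity with 1-coercive modulus is recessive}, and translate back using the characterization of uniform convexity via uniform monotonicity of the subdifferential.

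First I would set $A_i = \partial f_i$ for each $i\in I$, so that each $A_i$ is maximally monotone. By hypothesis, $f_{i_0}$ is uniformly convex, hence Fact~\ref{uniform convexity equivalent to uniform monotonicity} supplies a modulus $\phi_{i_0} = 2\phi_{f_{i_0}}^{**}$ such that $A_{i_0}$ is $\phi_{i_0}$-uniformly monotone; by construction $\phi_{i_0}$ is lower semicontinuous and convex (biconjugate of a nonnegative function), and it vanishes only at $0$ by the very definition of uniform monotonicity. For the remaining indices $i\neq i_0$, observe that $A_i$ is trivially monotone with lsc convex modulus $\phi_i\equiv 0$, so all hypotheses of Theorem~\ref{uniform monotonicity with 1-coercive modulus is recessive} are met.

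Next I would apply Theorem~\ref{uniform monotonicity with 1-coercive modulus is recessive} directly: its final sentence (the dominance clause) yields that $\mathcal{R}_\mu(\bold{A},\boldsymbol{\lambda})$ is $\phi$-uniformly monotone with $\phi = p_{\mu/2}(\boldsymbol{\phi},\boldsymbol{\lambda})$, where $\phi(t)=0\iff t=0$. Combining with equation~\eqref{sub of proximal average}, this gives uniform monotonicity of $\partial p_\mu(\bold{f},\boldsymbol{\lambda})$. Since Fact~\ref{resolvents of subs and proximal reformulation} ensures that $p_\mu(\bold{f},\boldsymbol{\lambda})$ is proper, lower semicontinuous and convex, the converse direction of Fact~\ref{uniform convexity equivalent to uniform monotonicity} delivers uniform convexity of $p_\mu(\bold{f},\boldsymbol{\lambda})$, finishing the argument.

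The main potential obstacle is purely bookkeeping: one must confirm that the modulus $\phi_{i_0} = 2\phi_{f_{i_0}}^{**}$ given by Fact~\ref{uniform convexity equivalent to uniform monotonicity} really falls within the class of moduli admitted by Theorem~\ref{uniform monotonicity with 1-coercive modulus is recessive} (nonnegative, lower semicontinuous, convex, vanishing only at $0$). This is immediate from the definition of biconjugate and from the fact that the uniform monotonicity asserted in Fact~\ref{uniform convexity equivalent to uniform monotonicity} already forces $\phi_{i_0}$ to be zero only at $0$. Everything else is a direct chase through the equivalences, so no new estimate is needed beyond the structural results already in place.
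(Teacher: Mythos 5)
Your proposal is correct and follows essentially the same route as the paper: set $A_i=\partial f_i$, use Fact~\ref{uniform convexity equivalent to uniform monotonicity} to get a lower semicontinuous convex modulus for $A_{i_0}$, apply the dominance clause of Theorem~\ref{uniform monotonicity with 1-coercive modulus is recessive} together with equation~\eqref{sub of proximal average}, and translate back via the same Fact. The only difference is that you spell out two bookkeeping points the paper leaves implicit (taking $\phi_i\equiv 0$ for $i\neq i_0$ so the theorem's hypotheses hold for every index, and citing Fact~\ref{resolvents of subs and proximal reformulation} for properness and lower semicontinuity of the proximal average), which is a welcome clarification rather than a deviation.
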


\begin{proof}
For each $i\in I$ we set $A_i=\partial f_i$. Since $f_{i_0}$ is uniformly convex, Fact~\ref{uniform convexity equivalent to uniform monotonicity} guarantees that there exists a lower semicontinuous and convex modulus $\phi_{i_0}$ such that $A_{i_0}$ is $\phi_{i_0}$-uniformly monotone. Consequently, Theorem~\ref{uniform monotonicity with 1-coercive modulus is recessive} together with equation~\eqref{sub of proximal average} guarantee that $\average=\partial p_\mu({\bf f},{\boldsymbol \lambda})$ is uniformly monotone, which, in turn, implies that $p_\mu({\bf f},{\boldsymbol \lambda})$ is uniformly convex.
\end{proof}

\begin{fact}\label{gage conjugation}\emph{\cite[Theorem 3.5.5]{Zal} }
Suppose that $f:\HH\to\RX$ is proper, lower semicontinuous and convex. 
Then \emph{(i)} $f$ is uniformly convex if and only if $f^*$ is uniformly smooth and \emph{(ii)} $f$ is uniformly smooth if and only if $f^*$ is uniformly convex. (Within our reflexive settings \emph{(i)} and \emph{(ii)} are equivalent.) 
\end{fact}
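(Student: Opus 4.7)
The plan is to prove (i), since the stated reflexivity remark makes (ii) follow by applying (i) to $g = f^*$ and invoking the Fenchel--Moreau identity $g^{*} = f^{**} = f$. Throughout, I would focus on reformulating each of uniform convexity and uniform smoothness as a condition on the subdifferential, and then translating between the two via Fenchel--Young duality and the identity $\partial f^* = (\partial f)^{-1}$.

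First, I would rewrite uniform convexity as a subgradient inequality: $f$ is uniformly convex with modulus $\phi$ if and only if, for all $x\in\dom\partial f$, all $x^{*}\in\partial f(x)$ and all $y\in\HH$,
\begin{equation*}
f(y)\ \geq\ f(x)+\scal{x^{*}}{y-x}+\phi(\|y-x\|).
\end{equation*}
This is obtained from the defining secant inequality by dividing by $\lambda$ and letting $\lambda\to 0^{+}$, using lower semicontinuity and the definition of $\partial f$. Dually, I would check that uniform smoothness of a proper lsc convex $g$ is equivalent to a reverse one-sided inequality: for every $y^{*}\in\dom\partial g$, every $y\in\partial g(y^{*})$ and every $x^{*}\in\HH$,
\begin{equation*}
g(x^{*})\ \leq\ g(y^{*})+\scal{y}{x^{*}-y^{*}}+\psi(\|x^{*}-y^{*}\|).
\end{equation*}
For this, the asymptotic condition $\psi(t)/t\to 0$ at $0$ is the correct dual of the zero-only-at-zero condition on $\phi$ after biconjugation.

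The translation from the primal inequality for $f$ to the dual inequality for $f^{*}$ would go via Fenchel--Young: on $\gra\partial f$ we have $f(x)+f^{*}(x^{*})=\scal{x}{x^{*}}$ and $x\in\partial f^{*}(x^{*})$. Plugging $f(x)=\scal{x}{x^{*}}-f^{*}(x^{*})$ into the subgradient inequality for $f$, rearranging, and then taking suprema in $y$ (replacing $f(y)$ by its Fenchel expression), one obtains an inequality of the form
\begin{equation*}
f^{*}(y^{*})\ \leq\ f^{*}(x^{*})+\scal{x}{y^{*}-x^{*}}+\phi^{\square}(\|y^{*}-x^{*}\|),
\end{equation*}
where $\phi^{\square}$ is essentially the one-dimensional Fenchel conjugate of the symmetric extension of $\phi$ to $\RR$. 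This is precisely uniform smoothness of $f^{*}$ with gauge $\psi:=\phi^{\square}$.

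The hard part will be the passage between the one-dimensional moduli $\phi_{f}$ and $\psi_{f^{*}}$: neither is convex in general, so one must work with $\phi_{f}^{**}$, verify that biconjugation does not destroy the characterizing inequalities, and match the qualitative conditions ($\phi(t)=0\iff t=0$ on one side, $\psi(t)/t\to 0$ at $0$ on the other) under Fenchel conjugation on $[0,\infty[$. A standard way to handle this is to replace the gauges by their convex envelopes and use that Fenchel conjugation on the half-line exchanges the super-linearity condition $\phi(t)/t\to\infty$ as $t\to\infty$ with $\psi(s)/s\to 0$ as $s\to 0^{+}$, and symmetrically at $0$; once these correspondences are in place, the two subgradient reformulations become conjugate to each other and the equivalence in (i) is immediate. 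Finally, (ii) follows by applying (i) to $f^{*}$ in place of $f$ and invoking $f^{**}=f$.
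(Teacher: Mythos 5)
The paper itself offers no proof of this statement: it is imported wholesale as a Fact from \cite[Theorem 3.5.5]{Zal}, so there is no internal argument to compare against, and your proposal must be judged on its own merits. Its skeleton is indeed the standard textbook route: reformulate uniform convexity as the subgradient inequality $f(y)\geq f(x)+\scal{x^*}{y-x}+\phi(\|y-x\|)$, reformulate uniform smoothness as the corresponding upper estimate at points of $\gra\partial f^*$, translate between the two across $\gra \partial f$ using Fenchel--Young and $\partial f^*=(\partial f)^{-1}$, and reduce (ii) to (i) via $f^{**}=f$. Those reductions are all sound (the upper-estimate characterization of smoothness follows, e.g., by the reflection trick $y\mapsto 2y^*-x^*$ in the secant inequality, and the return from subgradient inequalities to secant inequalities needs only a Br{\o}ndsted--Rockafellar density and lower-semicontinuity argument).

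The genuine gap sits exactly at the step you label ``the hard part,'' and it is not merely technical. Your translation produces the candidate smoothness gauge $\psi=\phi^{\square}$, where $\phi^{\square}(s)=\sup_{t\geq 0}(st-\phi(t))$ and $\phi$ is \emph{some} admissible modulus of uniform convexity of $f$. But the definition of uniform convexity puts no growth restriction on $\phi$ at infinity: $\phi$ may be bounded (e.g.\ $\phi(t)=\min\{t^2,1\}$ is a legitimate modulus for suitable $f$), in which case $\phi^{\square}\equiv+\infty$ on $\,]0,+\infty[\,$ and your derived inequality for $f^*$ is vacuous, certifying nothing. Your proposed repair --- passing to convex envelopes or to $\phi_f^{**}$ --- fails in precisely this scenario: a nonnegative bounded convex function on $[0,\infty[$ vanishing at $0$ is identically $0$, so biconjugation destroys the condition $\phi(t)=0\iff t=0$ rather than preserving it. What is missing is the nontrivial self-improvement lemma that carries the real weight in Z\u{a}linescu's treatment: the gauge of uniform convexity satisfies $\phi_f(ct)\geq c^{2}\phi_f(t)$ for all $c\geq 1$ (equivalently, $t\mapsto\phi_f(t)/t^{2}$ is nondecreasing), so a uniformly convex function automatically admits a modulus that is superlinear at infinity; only after this lemma does one-dimensional conjugation exchange superlinearity at $\infty$ for flatness at $0^{+}$ in the way you intend. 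Note the asymmetry: the converse direction needs no such lemma, since conjugating any gauge with $\psi(s)/s\to 0$ as $s\to 0^{+}$ automatically yields $\psi^{\square}(r)>0$ for every $r>0$. Supplying (or citing) that growth lemma is what stands between your outline and a complete proof.
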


\begin{theorem}\label{uniform smoothness is dominant or recessive}
\textbf{\emph{(uniform smoothness is dominant w.r.t. $p_\mu$)}}
Suppose that for each $i\in I$, $f_i:\HH\to\RX$ is proper, lower semicontinuous, convex and there exists $i_0\in I$ such that $f_{i_0}$ is uniformly smooth. Then $p_\mu({\bf f},{\boldsymbol \lambda})$ is uniformly smooth.
\end{theorem}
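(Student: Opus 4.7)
The plan is to proceed by duality, mirroring how the proximal average interacts with Fenchel conjugation. The key ingredients are formula~\eqref{proximal conjugate}, Fact~\ref{gage conjugation}, and the companion result Theorem~\ref{uniform convexity is dominant or recessive} that has just been established.

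First, I would record that since each $f_i$ is proper, lower semicontinuous, and convex, so is each conjugate $f_i^*$. Hence the tuple $\mathbf{f^*}=(f_1^*,\ldots,f_n^*)$ falls within the hypotheses of the dominance theorem for uniform convexity. Next, by Fact~\ref{gage conjugation}(ii), the uniform smoothness of $f_{i_0}$ is equivalent to the uniform convexity of $f_{i_0}^*$. Thus among the proper, lower semicontinuous, convex functions $f_1^*,\ldots,f_n^*$ there is at least one, namely $f_{i_0}^*$, which is uniformly convex.

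Applying Theorem~\ref{uniform convexity is dominant or recessive} to the tuple $\mathbf{f^*}$ with parameter $\mu^{-1}$ yields that $p_{\mu^{-1}}(\mathbf{f^*},\boldsymbol{\lambda})$ is uniformly convex. Now invoke formula~\eqref{proximal conjugate}, which gives
\begin{equation*}
\bigl(p_\mu(\mathbf{f},\boldsymbol{\lambda})\bigr)^{*}=p_{\mu^{-1}}(\mathbf{f^*},\boldsymbol{\lambda}),
\end{equation*}
so the Fenchel conjugate of $p_\mu(\mathbf{f},\boldsymbol{\lambda})$ is uniformly convex. A second appeal to Fact~\ref{gage conjugation}(ii), this time applied to the proper, lower semicontinuous, convex function $p_\mu(\mathbf{f},\boldsymbol{\lambda})$ (properness and lower semicontinuity being supplied by Fact~\ref{resolvents of subs and proximal reformulation}), converts uniform convexity of the conjugate back into uniform smoothness of $p_\mu(\mathbf{f},\boldsymbol{\lambda})$, which is the desired conclusion.

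The only subtlety I anticipate is whether formula~\eqref{proximal conjugate} can be applied directly; in the present paper it was re-derived only up to an additive constant via Theorem~\ref{mainresult}, but uniform smoothness (like uniform convexity in the previous theorem) is invariant under the addition of a constant, so the conclusion is unaffected. Everything else is bookkeeping, and no extra calculation with the explicit formula~\eqref{proximal average} for $p_\mu$ is needed.
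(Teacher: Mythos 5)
Your proof is correct and follows essentially the same route as the paper's: conjugate, apply Fact~\ref{gage conjugation} to transfer uniform smoothness of $f_{i_0}$ to uniform convexity of $f_{i_0}^*$, invoke Theorem~\ref{uniform convexity is dominant or recessive} for $p_{\mu^{-1}}({\bf f^*},{\boldsymbol \lambda})$, and conclude via formula~\eqref{proximal conjugate} and biconjugation. Your closing remark about additive constants is unnecessary here, since \eqref{proximal conjugate} is cited exactly from the literature (not merely recovered up to a constant), but it does no harm.
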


\begin{proof} Fact~\ref{gage conjugation} guarantees that $f^*_{i_0}$ is uniformly convex. Consequently, by Theorem~\ref{uniform convexity is dominant or recessive}, $p_{\mu^{-1}}({\bf f^*},{\boldsymbol \lambda})$ is uniformly convex. Finally,  Applying Fact~\ref{gage conjugation} together with formula~\eqref{proximal conjugate} implies that $p_\mu({\bf f},{\boldsymbol \lambda})=p_\mu({\bf f},{\boldsymbol \lambda})^{**}=p_{\mu^{-1}}({\bf f^*},{\boldsymbol \lambda})^*$ is uniformly smooth.
\end{proof}

\begin{remark}
A remark regarding the sharpness of our results is now in order. Under the generality of the hypotheses of Theorem~\ref{uniform monotonicity with 1-coercive modulus is recessive}, even when given that for each $i\in I$, $\phi_i$ is the largest possible modulus of monotonicity of $A_i$, it does not hold that  $\phi=p_{\frac{\mu}{2}}(\boldsymbol{\phi},\boldsymbol{\lambda})$ is necessarily the largest possible modulus of monotonicity of $\average$. In fact, the largest modulus of monotonicity of $\average$ cannot, in general, be expressed only in terms of $\mu,\boldsymbol{\phi}$ and $\boldsymbol{\lambda}$ but depends also on $\bold{A}$. To this end, we consider the following example which illustrates a similar situation for functions and, consequently, for their subdifferential operators. We will illustrate this issue outside the class of subdifferential operators in Example~\ref{strong sharpness counter outside subdeifferentials}  of the following subsection.
\end{remark}

\begin{example}\label{uniform counter example}
Let $f:\HH\to\RX$ be a proper, lower semicontinuous and convex function such that $\phi_f=0$ and $\psi_f=\iota_{\{0\}}$. Among a rich variety of choices, one can choose $f=\iota_C$ where $C$ is a closed half-space. For such a choice of $f$, one easily verifies that, indeed, by their definitions, $\phi_f=0$ and $\psi_f=\iota_{\{0\}}$. Then it follows that $f$ is neither uniformly convex nor uniformly smooth. Consequently, Fact~\ref{gage conjugation} implies that also $f^*$ is neither uniformly convex nor uniformly smooth. However, employing formula~\eqref{proximal average of function with its conjugate}, we see that $p(f,f^*)=q$ which is both, uniformly convex and uniformly smooth (in fact, as we recall in the following subsection, $q$ is strongly convex and strongly smooth). Thus, we see that $\phi_{p_\mu(\bold{f},\boldsymbol{\lambda})}$ and $\psi_{p_\mu(\bold{f},\boldsymbol{\lambda})}$ cannot, in general, be expressed only in terms of $\mu,\boldsymbol{\phi_f},\boldsymbol{\psi_f}$ and $\boldsymbol{\lambda}$ since, for example, in the case of $p(f,f)=f$ we have the same weights $\lambda_1=\lambda_2=\frac{1}{2}$, the same $\mu=1$ and the averaged functions have the same gages of uniform convexity and smoothness which are 0 and $\iota_{\{0\}}$, respectively, as in the case $p(f,f^*)=q$. However, $\phi_{p(f,f)}=0$ and $\psi_{p(f,f)}=\iota_{\{0\}}$, that is, $p(f,f)$ is neither uniformly convex nor uniformly smooth.      
\end{example}

\begin{remark}
Before ending the current discussion a remark regarding the hypothesis of Theorem~\ref{uniform monotonicity with 1-coercive modulus is recessive} is in order. In the general framework of monotone operators, we are not aware of a study of finer properties of the modulus of uniform monotonicity. In particular, existence of a convex modulus (like in the case of uniformly monotone subdifferential operators for which there exists a modulus which is convex and which possesses also other attractive properties (see \cite[Section 3.5]{Zal}), which is crucial in the proof of Theorem~\ref{uniform monotonicity with 1-coercive modulus is recessive}, is unavailable to us. At this point we relegate such a finer study for future research.
\end{remark}

\subsection{Strong monotonicity and cocoercivity}

We say that the mapping $A:\HH\rightrightarrows\HH$ is \emph{$\epsilon$-monotone}, where $\epsilon\geq 0$, if $A-\epsilon\Id$ is monotone, that is, if for any two points $(x,u)$ and $(y,v)$ in $\gr A$, 
$$
\epsilon\|x-y\|^2\leq\scal{v-u}{x-y}.
$$
In particular, a $0$-monotone mapping is simply a monotone mapping. We now recall that $A$ is said to be strongly monotone with constant $\epsilon$, $\epsilon$-strongly monotone for short, if it is $\epsilon$-monotone with $0<\epsilon$. Clearly, $\epsilon$-monotone implies $\epsilon'$-monotone  for any $0\leq\epsilon'<\epsilon$. Letting $\phi(t)=\epsilon t^2$, we see that $A$ is $\epsilon$-monotone if and only if $A$ is monotone with modulus $\phi$. Thus, the subject matter of our current discussion is a particular case of our discussion in the proceeding subsection. However, in view of the importance of strong monotonicity, cocoercivity, strong convexity, strong smoothness and Lipschitzness of the gradient (all which will be defined shortly), we single out these subjects and treat them separately. Moreover, in the present discussion we add quantitative information in terms of explicit constants of the above properties. To this end, we will make use of the following notations and conventions.

First we fix the following conventions in $[0,\infty]$: $0^{-1}=\infty,\ \infty^{-1}=0$ and $0\cdot\infty=0$. For $S\subseteq\HH$ we fix $\infty\cdot S=\HH$ if $0\in S$ and $\infty\cdot S=\varnothing$ if $0\notin S$. We will apply these conventions in the case where $\HH=\RR$ in order to calculate terms of the form $\alpha\Id$ and $\alpha q$ where $\alpha\in[0,\infty]$, however, these calculation hold in any Hilbert space $\HH$. With these conventions at hand, it now follows that $\infty q=\iota_{\{0\}}$, $(\infty q)^*=0$ and $\partial(\infty q)=\partial\iota_{\{0\}}=N_{\{0\}}=\infty\cdot\Id=\infty\partial q$. Consequently, for every $\alpha\in[0,\infty]$ we deduce the following formulae: $(\alpha q)^*=\alpha^{-1}q$, $\partial\alpha q=\alpha\Id=\alpha\partial q$ and $\partial(\alpha q)^*=\alpha^{-1}\Id=(\alpha\Id)^{-1}=(\partial\alpha q)^{-1}$. 

Suppose that for each $i\in I$, $0\leq\alpha_i\leq\infty$ and set $\boldsymbol{\alpha}=(\alpha_1\cdots,\alpha_n)$. Then we define 
\begin{equation}\label{r average}
r_\mu(\boldsymbol{\alpha},\boldsymbol{\lambda})=\big[\sum_{i\in I}\lambda_i(\alpha_i+\mu^{-1})^{-1}\big]^{-1}-\mu^{-1}\ \ \ \text{and}\ \ \ \ r(\boldsymbol{\alpha},\boldsymbol{\lambda})=r_1(\boldsymbol{\alpha},\boldsymbol{\lambda}).
\end{equation}
We note that $0<r_\mu(\boldsymbol{\alpha},\boldsymbol{\lambda})$ if and only if there exists $i_0\in I$ such that $0<\alpha_{i_0}$ and $r_\mu(\boldsymbol{\alpha},\boldsymbol{\lambda})<\infty$ if and only if there exists $i_0\in I$ such that $\alpha_{i_0}<\infty$. For each $i\in I$ we set $f_i=\alpha_i q$ and $A_i=\alpha_i\Id=\partial f_i$. Then by combining our settings and calculations above with either a direct computations or with our formulae from Section 2 (namely, formulae~\eqref{resolvent average of sub},~\eqref{proximal conjugate},~\eqref{niceselfdual} and~\eqref{scaled average}), the following properties of $r_\mu$ follow:  
\begin{align}
&\average=r_\mu(\boldsymbol{\alpha},\boldsymbol{\lambda})\Id,\label{r and R}\\
&p_\mu(\bold{f},\boldsymbol{\lambda})=r_\mu(\boldsymbol{\alpha},\boldsymbol{\lambda})q,\label{r and p}\\
&r_\mu(\boldsymbol{\alpha},\boldsymbol{\lambda})^{-1}=r_{\mu^{-1}}(\boldsymbol{\alpha^{-1}},\boldsymbol{\lambda}),\ \ \ \ \ \text{that is,}\ \ \ \ r_{\mu^{-1}}(\boldsymbol{\alpha^{-1}},\boldsymbol{\lambda})^{-1}=r_\mu(\boldsymbol{\alpha},\boldsymbol{\lambda}),\label{r inverse}\\
&r_{\mu}(\boldsymbol{\alpha},\boldsymbol{\lambda})=\mu^{-1}r(\mu\boldsymbol{\alpha},\boldsymbol{\lambda}).\label{scaled r}
\end{align}
For positive real numbers $\alpha_i$, formula~\eqref{r and p}  was obtained in \cite[formula (25)]{BGLW}.

\begin{theorem}\label{strong monotonicity dominance}\textbf{\emph{(strong monotonicity is dominant)}} Suppose that for each $i\in I$, $\epsilon_i\geq0$, $A_i:\HH\rightrightarrows\HH$ is maximally monotone and $\epsilon_i$-monotone. Then $\average$ is $\epsilon$-monotone where $\epsilon=r_\mu(\boldsymbol{\epsilon},\boldsymbol{\lambda})$. In particular, if there exists $i_0\in I$, such that $A_{i_0}$ is $\epsilon_{i_0}$-strongly monotone, then $\average$ is $\epsilon$-strongly monotone.
\end{theorem}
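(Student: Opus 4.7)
The plan is to reduce the theorem to an application of Theorem~\ref{uniform monotonicity with 1-coercive modulus is recessive}. For each $i\in I$, the $\epsilon_i$-monotonicity of $A_i$ is, by definition, the same as $\phi_i$-monotonicity for the modulus
\[
\phi_i(t)=\epsilon_i t^2=2\epsilon_i q(t),
\]
which is lower semicontinuous and convex (indeed, a nonnegative multiple of $q$). Since $A_i$ is maximally monotone and monotone with this convex modulus $\phi_i$, Theorem~\ref{uniform monotonicity with 1-coercive modulus is recessive} directly yields that $\average$ is monotone with modulus $\phi=p_{\mu/2}(\boldsymbol{\phi},\boldsymbol{\lambda})$.

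The second step is to identify $\phi$ explicitly. Since each $\phi_i=2\epsilon_i q$ has the form covered by formula~\eqref{r and p}, that formula gives
\[
\phi=p_{\mu/2}(2\boldsymbol{\epsilon}\, q,\boldsymbol{\lambda})=r_{\mu/2}(2\boldsymbol{\epsilon},\boldsymbol{\lambda})\, q.
\]
A short algebraic check using the definition~\eqref{r average} of $r_\mu$ shows that $r_{\mu/2}(2\boldsymbol{\epsilon},\boldsymbol{\lambda})=2r_\mu(\boldsymbol{\epsilon},\boldsymbol{\lambda})$: one factors a $2$ out of each term $(2\epsilon_i+2\mu^{-1})^{-1}=\tfrac12(\epsilon_i+\mu^{-1})^{-1}$ and then subtracts $2\mu^{-1}=2\cdot\mu^{-1}$. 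Consequently, $\phi(t)=2r_\mu(\boldsymbol{\epsilon},\boldsymbol{\lambda})\, q(t)=r_\mu(\boldsymbol{\epsilon},\boldsymbol{\lambda})\, t^2=\epsilon\, t^2$, which is precisely $\epsilon$-monotonicity of $\average$.

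Finally, the ``in particular'' clause is immediate from the observation recorded right after the definition~\eqref{r average}: $r_\mu(\boldsymbol{\epsilon},\boldsymbol{\lambda})>0$ if and only if $\epsilon_{i_0}>0$ for some $i_0\in I$. Thus the presence of a single strongly monotone $A_{i_0}$ forces $\epsilon>0$, and hence $\average$ is $\epsilon$-strongly monotone.

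I do not expect a genuine obstacle here, since the substantive analytic work has already been done in Theorem~\ref{uniform monotonicity with 1-coercive modulus is recessive}. What remains is bookkeeping: carefully tracking the parameter $\mu$ together with the factor of $2$ produced by $q(t)=t^2/2$, which is exactly what the identity $r_{\mu/2}(2\boldsymbol{\epsilon},\boldsymbol{\lambda})=2r_\mu(\boldsymbol{\epsilon},\boldsymbol{\lambda})$ encodes. If one prefers a self-contained argument avoiding the uniform monotonicity theorem, an alternative is to translate $\epsilon_i$-monotonicity of $A_i$ into the resolvent inequality $(1+\mu\epsilon_i)\|J_{\mu A_i}x-J_{\mu A_i}y\|^2\leq\langle x-y,J_{\mu A_i}x-J_{\mu A_i}y\rangle$, then apply formula~\eqref{f:monoident} to the convex combination $J_{\mu\average}=\sum_{i\in I}\lambda_i J_{\mu A_i}$; however, this direct route requires essentially the same arithmetic as the reduction proposed above.
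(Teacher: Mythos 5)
Your proposal is correct and follows essentially the same route as the paper's proof: reduce to Theorem~\ref{uniform monotonicity with 1-coercive modulus is recessive} with the convex moduli $\phi_i(t)=\epsilon_i t^2$, then identify $p_{\mu/2}(\boldsymbol{\phi},\boldsymbol{\lambda})$ via formula~\eqref{r and p} and a scaling computation, and finish with the observation that $r_\mu(\boldsymbol{\epsilon},\boldsymbol{\lambda})>0$ precisely when some $\epsilon_{i_0}>0$. The only cosmetic difference is that you verify $r_{\mu/2}(2\boldsymbol{\epsilon},\boldsymbol{\lambda})=2r_\mu(\boldsymbol{\epsilon},\boldsymbol{\lambda})$ by direct algebra, whereas the paper invokes its scaling identity~\eqref{scaled r}; the two computations are equivalent.
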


\begin{proof}
For each $\in I$ we let $\phi_i(t)=\epsilon_i t^2=2\epsilon_i(t^2/2)$. Then $A_i$ is monotone with modulus $\phi_i$. Consequently, Theorem~\ref{uniform monotonicity with 1-coercive modulus is recessive} guarantees that $\average$ is monotone with modulus $\phi=p_\frac{\mu}{2}(\boldsymbol{\phi},\boldsymbol{\lambda})$. By employing formulae~\eqref{r and p} and~\eqref{scaled r} we see that for every $t\geq0$, $\phi(t)=r_\frac{\mu}{2}(2\boldsymbol{\epsilon},\boldsymbol{\lambda})(t^2/2)=r_\mu(\boldsymbol{\epsilon},\boldsymbol{\lambda})t^2$ which means that $\average$ is $\epsilon$-monotone where $\epsilon=r_\mu(\boldsymbol{\epsilon},\boldsymbol{\lambda})$. In particular, if there exists $i_0\in I$, such that $\epsilon_{i_0}>0$, then $r_\mu(\boldsymbol{\epsilon},\boldsymbol{\lambda})>0$.
\end{proof}

We recall that the mapping $A:\HH\rightrightarrows\HH$ is said to
be $\epsilon$-cocoercive, where $\epsilon>0$, if $A^{-1}$ is
$\epsilon$-strongly monotone, that is, if for every pair of points
$(x,u)$ and $(y,v)$ in $\gr A$, $\epsilon\|u-v\|^2\leq\scal{u-v}{x-y}$.
The following result is an immediate consequence of Theorem~\ref{strong
monotonicity dominance}.

\begin{corollary}\label{cocoecivity dominance}\textbf{\emph{(cocoerciveness is dominant)}}
Suppose that for each $i\in I$, $\epsilon_i\geq 0$, $A_i:\HH\rightrightarrows\HH$ is maximally monotone and $A_i^{-1}$ is $\epsilon_i$-monotone. Then $(\average)^{-1}$ is $\epsilon$-monotone where $\epsilon=r_{\mu^{-1}}(\boldsymbol{\epsilon},\boldsymbol{\lambda})$. In particular, if there exists $i_0\in I$ such that $A_{i_0}$ is $\epsilon_{i_0}$-cocoercive, then $\averageonelambda$ is $\epsilon$-cocoercive.
\end{corollary}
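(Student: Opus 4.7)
The plan is to derive this corollary directly from Theorem~\ref{strong monotonicity dominance} by dualizing via the inversion formula of Theorem~\ref{mainresult}. The whole point of cocoercivity is that it is strong monotonicity of the inverse, and the whole point of the resolvent average's inversion formula is that inverting commutes with averaging provided one flips the parameter $\mu \leftrightarrow \mu^{-1}$. These two facts should match up perfectly.

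First I would record that since each $A_i$ is maximally monotone, each $A_i^{-1}$ is also maximally monotone, and by hypothesis each $A_i^{-1}$ is $\epsilon_i$-monotone. I would then apply Theorem~\ref{strong monotonicity dominance} to the tuple $\bA^{-1}=(A_1^{-1},\dots,A_n^{-1})$ with parameter $\mu^{-1}$ in place of $\mu$, which yields that $\averageinverse$ is $\epsilon$-monotone with $\epsilon = r_{\mu^{-1}}(\boldsymbol{\epsilon},\boldsymbol{\lambda})$. Next I would invoke Theorem~\ref{mainresult}, which gives $(\average)^{-1}=\averageinverse$, so that $(\average)^{-1}$ inherits $\epsilon$-monotonicity with the claimed constant. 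This proves the first assertion.

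For the ``in particular'' statement, I would observe that if some $A_{i_0}$ is $\epsilon_{i_0}$-cocoercive with $\epsilon_{i_0}>0$, then $A_{i_0}^{-1}$ is $\epsilon_{i_0}$-strongly monotone, so the strong monotonicity clause of Theorem~\ref{strong monotonicity dominance} (applied again to $\bA^{-1}$ with parameter $\mu^{-1}$) yields $\epsilon = r_{\mu^{-1}}(\boldsymbol{\epsilon},\boldsymbol{\lambda})>0$, using the observation recorded just after~\eqref{r average} that strict positivity of a single $\epsilon_{i_0}$ forces $r_{\mu^{-1}}(\boldsymbol{\epsilon},\boldsymbol{\lambda})>0$. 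Combined with the inversion identity, this means $\average$ is $\epsilon$-cocoercive.

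There is really no obstacle here beyond bookkeeping; the only subtle point is the $\mu \leftrightarrow \mu^{-1}$ swap in the modulus $r_{\mu^{-1}}(\boldsymbol{\epsilon},\boldsymbol{\lambda})$, which is dictated precisely by the structure of the inversion formula~\eqref{niceselfdual} together with the conjugation identity~\eqref{r inverse}. Everything else is immediate from the previously established dominance of strong monotonicity.
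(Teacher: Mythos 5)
Your proposal is correct and is essentially the paper's own proof: the paper likewise applies Theorem~\ref{strong monotonicity dominance} to $(A_1^{-1},\dots,A_n^{-1})$ with parameter $\mu^{-1}$ and then identifies $(\average)^{-1}$ with $\averageinverse$ via the inversion formula of Theorem~\ref{mainresult}, obtaining $\epsilon=r_{\mu^{-1}}(\boldsymbol{\epsilon},\boldsymbol{\lambda})$. Your write-up merely spells out the maximal monotonicity of each $A_i^{-1}$ and the positivity of $\epsilon$ in the strongly monotone case, which the paper leaves implicit.
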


\begin{proof}
Since for each $i\in I$, $A_{i}^{-1}$ is $\epsilon_i$-monotone, then Theorem~\ref{strong monotonicity dominance} together with Theorem~\ref{mainresult} guarantee that $(\average)^{-1}=\averageinverse$ is $\epsilon$-monotone. 
\end{proof}

We say that the proper function $f:\HH\to\RX$ is $\epsilon$-convex,
where $\epsilon\geq 0$, if $f$ is convex with modulus
$\phi(t)=\frac{\epsilon}{2}t^2$ (see~\eqref{uniform convexity}).
In particular, a $0$-convex function is simply a convex function.
We recall that $f$ is said to be \emph{strongly convex} with constant
$\epsilon$, $\epsilon$-strongly convex for short, if $f$ is
$\epsilon$-convex and $\epsilon>0$.  We say that the proper and
convex function $f:\HH\to\RX$ is $\epsilon$-smooth, where
$\epsilon\geq 0$, if $f$ is smooth with modulus
$\psi(t)=\frac{1}{2\epsilon}t^2$ (see~\eqref{uniform smoothness}).
In particular, a 0-smooth function is any proper and convex function.
We now recall that $f$ is said to be \emph{strongly smooth} with
constant $\epsilon$, $\epsilon$-strongly smooth for short, if $f$
is $\epsilon$-smooth and $\epsilon>0$.

\begin{fact}\label{Zal strong convexity iff}\emph{\cite[Corollary 3.5.11 and Remark 3.5.3]{Zal}} 
Suppose that $f:\HH\to\RX$ is proper, lower semicontinuous and convex. Then the following assertions are equivalent:
\begin{enumerate}
\item $f$ is $\epsilon$-convex;
\item $\partial f$ is $\epsilon$-monotone;
\item $f^*$ is $\epsilon$-smooth.
\end{enumerate}
If  $\epsilon>0$, assertions \emph{(i), (ii)} and \emph{(iii)} above are equivalent to the following assertion:
\begin{enumerate}\setcounter{enumi}{3}
\item $\dom f^*=\HH$, $f^*$ is Fr\'{e}chet differentiable on $\HH$ and $\partial f^*=\nabla f^*$ is $\frac{1}{\epsilon}$-Lipschitz.
\end{enumerate}
\end{fact}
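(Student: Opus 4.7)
The plan is to introduce the quadratic shift $g := f - \epsilon q$ with $q = \tfrac{1}{2}\|\cdot\|^2$ and reduce each of (i)--(iv) to a statement about $g$ or its conjugate $g^*$. By the very definition \eqref{uniform convexity} specialized to $\phi(t) = \tfrac{\epsilon}{2}t^2$, assertion (i) is equivalent to convexity of $g$. Since $\epsilon q$ is Fr\'echet differentiable with $\nabla(\epsilon q) = \epsilon\Id$, subdifferential calculus yields $\partial g = \partial f - \epsilon\Id$, so (ii) is equivalent to monotonicity of $\partial g$. Writing $f = g + \epsilon q$ and applying the inf-convolution formula for the conjugate of a sum (valid here because $\epsilon q$ is finite and continuous on $\HH$), together with $(\epsilon q)^* = \tfrac{1}{\epsilon}q$, gives the key identity
\begin{equation*}
f^* \;=\; g^* \infconv \tfrac{1}{\epsilon}q,
\end{equation*}
exhibiting $f^*$ as the Moreau envelope of $g^*$ with parameter $\epsilon$; this identity drives the remaining equivalences.

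For (i)$\iff$(ii), the forward direction uses the strengthened subgradient inequality $f(y)\ge f(x) + \scal{u}{y-x} + \tfrac{\epsilon}{2}\|y-x\|^2$, obtained by dividing the $\epsilon$-convexity inequality by $\lambda$ and letting $\lambda\to 0^+$, then symmetrizing over two pairs $(x,u),(y,v)\in\gra\partial f$. For the converse, integrate the one-variable inequality $\scal{v_2-v_1}{y-x}\ge\epsilon(t_2-t_1)\|y-x\|^2$, which follows from $\epsilon$-monotonicity of $\partial f$ with $v_i\in\partial f(x+t_i(y-x))$, along the segment from $x$ to $y$ to recover the strengthened subgradient inequality. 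Passing from a single base point $x$ to the chord inequality then yields $\epsilon$-convexity of $f$.

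For (i)$\iff$(iii), an algebraic rearrangement inside the infimum in $f^* = g^* \infconv \tfrac{1}{\epsilon}q$ produces the representation
\begin{equation*}
\tfrac{1}{\epsilon}q - f^* \;=\; \sup_{v\in\HH}\bigl\{\tfrac{1}{\epsilon}\scal{\cdot}{v} - g^*(v) - \tfrac{1}{2\epsilon}\|v\|^2\bigr\},
\end{equation*}
which, being a supremum of continuous affine functionals, is convex. Unpacking definition \eqref{uniform smoothness} with $\psi(t) = \tfrac{t^2}{2\epsilon}$ and the polarization identity $\|(1-\lambda)u + \lambda v\|^2 = (1-\lambda)\|u\|^2 + \lambda\|v\|^2 - \lambda(1-\lambda)\|u-v\|^2$ shows that $\epsilon$-smoothness of $f^*$ is equivalent to convexity of $\tfrac{1}{\epsilon}q - f^*$. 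The reverse direction follows by applying the same argument to $f^*$ in place of $f$ with parameter $\epsilon^{-1}$ and invoking biconjugation $f^{**}=f$.

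For the final equivalence with (iv) under $\epsilon > 0$, strong convexity of $f$ together with any subgradient at a base point yields $f\ge\epsilon q + (\text{affine lower bound})$, so $f$ is supercoercive and $\dom f^* = \HH$; the Moreau-envelope expression for $f^*$ then identifies $\nabla f^*(u)=\epsilon^{-1}(u-J_{\epsilon\partial g^*}(u))$, which is $\tfrac{1}{\epsilon}$-Lipschitz since $\Id - J_{\epsilon\partial g^*}$ is firmly nonexpansive by Fact~\ref{f:Minty} and Fact~\ref{f:firm}. The converse (iv)$\Rightarrow$(iii) is the standard chord form of the descent lemma for convex functions with $L$-Lipschitz gradient. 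The main obstacle I anticipate is the integration argument establishing (ii)$\Rightarrow$(i), which requires careful handling of the one-sided derivatives of the convex function $t\mapsto f(x + t(y-x))$ on $[0,1]$ together with their relation to the subdifferential selections used in the $\epsilon$-monotonicity hypothesis; all remaining steps are algebraic manipulations or standard Moreau-envelope identities.
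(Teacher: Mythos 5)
First, a point of reference: the paper does not prove this statement at all --- it is imported as a Fact with a citation to Z\u{a}linescu's book --- so your proposal can only be assessed on its own merits. The forward half of your scheme is sound: (i) $\iff$ ``$g=f-\epsilon q$ is convex'' via the parallelogram identity, (i)$\Rightarrow$(ii) via the strengthened subgradient inequality plus symmetrization, (i)$\Rightarrow$(iii) via $f^*=g^*\infconv\tfrac{1}{\epsilon}q$ and the sup-of-affine representation, and (i)$\Rightarrow$(iv)$\Rightarrow$(iii) are all correct. The problem is that \emph{both} of your implications into (i) have genuine gaps, so the equivalence is not established.

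The first gap is (ii)$\Rightarrow$(i) by integration along $[x,y]$. Your hypothesis concerns $\partial f$, but the open segment joining two points of $\dom\partial f$ can avoid $\dom\partial f$ entirely, in which case $\epsilon$-monotonicity says nothing about the one-sided derivatives of $t\mapsto f(x+t(y-x))$, and the integration cannot start. Concretely, let $h(x,y)=\max\{-\sqrt{y},\,x-1,\,-x\}$ for $y\geq 0$, $h=+\infty$ for $y<0$, and $f=q+h$. Then $f$ is proper, lower semicontinuous, convex, $\partial f=\Id+\partial h$ is $1$-strongly monotone (so (ii) holds with $\epsilon=1$), and $(0,0)$ and $(1,0)$ belong to $\dom\partial f$, yet $\partial f(t,0)=\varnothing$ for every $0<t<1$ (any candidate subgradient $(a,b)$ at $(t,0)$ must satisfy $b\leq -1/\sqrt{s}$ for all small $s>0$). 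No ``careful handling of one-sided derivatives'' bridges this, because the hypothesis simply does not reach such segments; the standard repairs are to prove the inequality for the Moreau--Yosida regularizations $f_\lambda$ (whose gradients are everywhere defined and $\tfrac{\epsilon}{1+\lambda\epsilon}$-strongly monotone) and let $\lambda\downarrow 0$, or to route through the conjugate as below. A related misstatement: ``$\partial g=\partial f-\epsilon\Id$ by subdifferential calculus'' is valid only \emph{after} $g$ is known convex; with the paper's global-inequality subdifferential it fails otherwise (take $f=q$, $\epsilon=2$: then $\partial f-2\Id=-\Id$, while $\partial(f-2q)=\partial(-q)=\varnothing$, which is vacuously monotone although (ii) fails), so ``(ii) $\iff$ $\partial g$ monotone'' is false as a standalone reduction.

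The second gap is (iii)$\Rightarrow$(i) ``by the same argument applied to $f^*$ with parameter $\epsilon^{-1}$.'' Your forward argument proves the implication ``$\phi-\delta q$ convex $\Rightarrow$ $\tfrac{1}{\delta}q-\phi^*$ convex''; applying it to $\phi=f^*$, $\delta=\epsilon^{-1}$ yields ``$f^*-\tfrac{1}{\epsilon}q$ convex $\Rightarrow$ $\epsilon q-f$ convex,'' i.e.\ strong convexity of $f^*$ implies smoothness of $f$ --- not the needed ``$f^*$ $\epsilon$-smooth $\Rightarrow$ $f$ $\epsilon$-convex.'' The implication you need is of the reverse type and requires its own argument; fortunately a short one exists in your spirit: given that $h:=\tfrac{1}{\epsilon}q-f^*$ is convex, substitute $u=\epsilon x+s$ in $f(x)=f^{**}(x)=\sup_{u}\big(\scal{x}{u}-f^*(u)\big)$ to obtain
\begin{equation*}
f(x)-\epsilon q(x)=\sup_{s\in\HH}\Big(h(\epsilon x+s)-\tfrac{1}{2\epsilon}\|s\|^2\Big),
\end{equation*}
a supremum of convex functions of $x$, hence convex. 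With this, the whole Fact follows cleanly within your framework: $\epsilon=0$ is trivial; for $\epsilon>0$ prove (ii)$\Rightarrow$(iv) directly (monotonicity of $\tfrac{1}{\epsilon}\partial f-\Id$ plus maximality of $\partial f$ make $\tfrac{1}{\epsilon}\partial f-\Id$ maximally monotone, so Minty's theorem gives $\ran\partial f=\HH$; $\epsilon$-cocoercivity of $\partial f^*=(\partial f)^{-1}$ then gives single-valuedness, $\tfrac{1}{\epsilon}$-Lipschitzness and Fr\'{e}chet differentiability), then (iv)$\Rightarrow$(iii) is your descent-lemma step and (iii)$\Rightarrow$(i) is the display above, closing the cycle (i)$\Rightarrow$(ii)$\Rightarrow$(iv)$\Rightarrow$(iii)$\Rightarrow$(i).
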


As a consequence, we obtain the following results:

\begin{theorem}\label{strong convexity dominance}\textbf{\emph{(strong convexity is dominant w.r.t. $p_\mu$)}}
Suppose that for each $i\in I$, $f_i:\HH\to\RX$ is proper, lower semicontinuous and $\epsilon_i$-convex. Then $p_\mu({\bf f},{\boldsymbol \lambda})$ is $\epsilon$-convex where $\epsilon=r_\mu(\boldsymbol{\epsilon},\boldsymbol{\lambda})$. In particular, if there exists $i_0\in I$ such that $f_{i_0}$ is $\epsilon_{i_o}$-strongly convex, then $p_\mu({\bf f},{\boldsymbol \lambda})$ is $\epsilon$-strongly convex.
\end{theorem}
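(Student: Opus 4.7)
The plan is to reduce the assertion about functions to the corresponding assertion about their subdifferentials, which is already handled by Theorem~\ref{strong monotonicity dominance}. The translation between strong convexity of a function and strong monotonicity of its subdifferential is provided by the equivalence (i)$\Leftrightarrow$(ii) in Fact~\ref{Zal strong convexity iff}.

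Concretely, I would proceed as follows. For each $i\in I$, set $A_i=\partial f_i$. Since each $f_i$ is proper, lower semicontinuous, convex and $\epsilon_i$-convex, Fact~\ref{Zal strong convexity iff} gives that $A_i$ is maximally monotone and $\epsilon_i$-monotone. Applying Theorem~\ref{strong monotonicity dominance} to the tuple $\bold{A}=(A_1,\dots,A_n)$ yields that $\average$ is $\epsilon$-monotone with $\epsilon=r_\mu(\boldsymbol{\epsilon},\boldsymbol{\lambda})$. By formula~\eqref{sub of proximal average},
\[
\partial p_\mu(\bold{f},\boldsymbol{\lambda})=\average,
\]
so $\partial p_\mu(\bold{f},\boldsymbol{\lambda})$ is $\epsilon$-monotone. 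Fact~\ref{resolvents of subs and proximal reformulation} ensures that $p_\mu(\bold{f},\boldsymbol{\lambda})$ is itself proper, lower semicontinuous and convex, so we are entitled to invoke the reverse implication (ii)$\Rightarrow$(i) of Fact~\ref{Zal strong convexity iff} once more, concluding that $p_\mu(\bold{f},\boldsymbol{\lambda})$ is $\epsilon$-convex.

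For the ``in particular'' clause, if some $\epsilon_{i_0}>0$, then by inspecting the definition~\eqref{r average} of $r_\mu$ (or by the remark right after it, noting that $0<r_\mu(\boldsymbol{\epsilon},\boldsymbol{\lambda})$ whenever at least one $\epsilon_i$ is positive), we obtain $\epsilon=r_\mu(\boldsymbol{\epsilon},\boldsymbol{\lambda})>0$, hence $p_\mu(\bold{f},\boldsymbol{\lambda})$ is $\epsilon$-strongly convex.

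I do not anticipate a serious obstacle here: the theorem is essentially a dictionary translation of Theorem~\ref{strong monotonicity dominance} via Fact~\ref{Zal strong convexity iff} combined with identity~\eqref{sub of proximal average}. The only point demanding a bit of care is making sure the constant transfers cleanly, i.e.\ that the constant $\epsilon_i$ for $f_i$-convexity is exactly the constant $\epsilon_i$ for $\partial f_i$-monotonicity (so that the same $r_\mu(\boldsymbol{\epsilon},\boldsymbol{\lambda})$ appears on both sides), which is precisely the content of Fact~\ref{Zal strong convexity iff}.
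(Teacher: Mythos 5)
Your proposal is correct and follows essentially the same route as the paper's own proof: set $A_i=\partial f_i$, invoke Fact~\ref{Zal strong convexity iff} to translate $\epsilon_i$-convexity into $\epsilon_i$-monotonicity, apply Theorem~\ref{strong monotonicity dominance} together with identity~\eqref{sub of proximal average}, and translate back. Your added remarks (properness, lower semicontinuity and convexity of $p_\mu({\bf f},{\boldsymbol \lambda})$ via Fact~\ref{resolvents of subs and proximal reformulation}, and positivity of $r_\mu(\boldsymbol{\epsilon},\boldsymbol{\lambda})$ when some $\epsilon_{i_0}>0$) merely make explicit steps the paper leaves implicit.
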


\begin{proof}
For each $i\in I$ we set $A_i=\partial f_i$. Then Fact~\ref{Zal strong convexity iff} guarantees that $A_i$ is $\epsilon_i$-monotone. Consequently, Theorem~\ref{strong monotonicity dominance} together with equation~\eqref{sub of proximal average} imply that $\average=\partial p_\mu({\bf f},{\boldsymbol \lambda})$ is $\epsilon$-monotone which, in turn, implies that $p_\mu({\bf f},{\boldsymbol \lambda})$ is $\epsilon$-convex. In particular, if there exists $i_0\in I$ such that $f_{i_0}$ is $\epsilon_{i_0}$-strongly convex, then $p_\mu({\bf f},{\boldsymbol \lambda})$ is $\epsilon$-strongly convex.
\end{proof}

\begin{theorem}\label{strong smoothness dominance}\textbf{\emph{(strong smoothness is dominant w.r.t. $p_\mu$)}}
Suppose that for each $i\in I$, $f_i:\HH\to\RX$ is proper, lower semicontinuous and $\epsilon_i$-smooth. Then $p_\mu({\bf f},{\boldsymbol \lambda})$ is $\epsilon$-smooth where $\epsilon=r_{\mu^{-1}}(\boldsymbol{\epsilon},\boldsymbol{\lambda})$. In particular, if there exists $i_0\in I$ such that $f_{i_0}$ is $\epsilon_{i_0}$-strongly smooth, then $p_\mu({\bf f},{\boldsymbol \lambda})$ is $\epsilon$-strongly smooth.
\end{theorem}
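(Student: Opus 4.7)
The plan is to mirror the proof of Theorem~\ref{uniform smoothness is dominant or recessive}: transport the hypothesis to Fenchel conjugates where the dominance of strong convexity (Theorem~\ref{strong convexity dominance}) applies, and then transport back via the conjugation formula~\eqref{proximal conjugate}. The bridge in both directions is Fact~\ref{Zal strong convexity iff}, specifically the equivalence (i)$\Leftrightarrow$(iii), which tells us that a proper, lsc, convex function is $\epsilon$-smooth precisely when its conjugate is $\epsilon$-convex.

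First, since for each $i \in I$ the function $f_i$ is proper, lsc and $\epsilon_i$-smooth (in particular, convex), Fact~\ref{Zal strong convexity iff} yields that $f_i^*$ is proper, lsc, and $\epsilon_i$-convex. Next I would apply Theorem~\ref{strong convexity dominance} to the tuple $\bf f^* = (f_1^*,\ldots,f_n^*)$ with parameter $\mu^{-1}$ in place of $\mu$: this produces that $p_{\mu^{-1}}(\bf f^*,\boldsymbol{\lambda})$ is $\epsilon$-convex, where
\[
\epsilon = r_{(\mu^{-1})}(\boldsymbol{\epsilon},\boldsymbol{\lambda}) = r_{\mu^{-1}}(\boldsymbol{\epsilon},\boldsymbol{\lambda}),
\]
which matches the constant claimed in the statement.

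Finally, invoking the conjugation formula~\eqref{proximal conjugate}, once with $\mu$ replaced by $\mu^{-1}$ and $\bf f$ replaced by $\bf f^*$, and using $f_i^{**} = f_i$ (valid since each $f_i$ is proper, lsc and convex), I get
\[
\bigl(p_{\mu^{-1}}(\bf f^*,\boldsymbol{\lambda})\bigr)^{\!*} \;=\; p_\mu(\bf f^{**},\boldsymbol{\lambda}) \;=\; p_\mu(\bf f,\boldsymbol{\lambda}).
\]
Applying Fact~\ref{Zal strong convexity iff} once more, the $\epsilon$-convexity of $p_{\mu^{-1}}(\bf f^*,\boldsymbol{\lambda})$ transfers to $\epsilon$-smoothness of its conjugate $p_\mu(\bf f,\boldsymbol{\lambda})$, which is the desired conclusion. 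For the ``in particular'' clause, I would simply note that if $\epsilon_{i_0} > 0$ for some $i_0 \in I$, then the observation recorded right after definition~\eqref{r average} gives $r_{\mu^{-1}}(\boldsymbol{\epsilon},\boldsymbol{\lambda}) > 0$, so $\epsilon > 0$ and $\epsilon$-smoothness upgrades to strong smoothness.

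There is no real obstacle: the only place where one has to be attentive is the bookkeeping of the $\mu \leftrightarrow \mu^{-1}$ swap between primal and dual sides (which is why the constant is $r_{\mu^{-1}}$ rather than $r_\mu$), and the routine check that the degenerate cases $\epsilon_i = 0$ or $\epsilon_i = \infty$ are accommodated by the $[0,\infty]$-conventions adopted just before~\eqref{r average}, under which Fact~\ref{Zal strong convexity iff} continues to make sense (an $\epsilon_i=0$ smoothness assumption on $f_i$ is just convexity, and the corresponding contribution to $r_{\mu^{-1}}$ is handled by the arithmetic conventions).
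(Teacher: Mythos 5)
Your proposal is correct and follows essentially the same route as the paper: pass to conjugates via Fact~\ref{Zal strong convexity iff}, apply the dominance of strong convexity (Theorem~\ref{strong convexity dominance}) to ${\bf f^*}$ with parameter $\mu^{-1}$, and return via the conjugation formula~\eqref{proximal conjugate} together with biconjugation. The only cosmetic difference is that you apply~\eqref{proximal conjugate} with $(\mu^{-1},{\bf f^*})$ in place of $(\mu,{\bf f})$ and use $f_i^{**}=f_i$, whereas the paper applies the formula directly and conjugates both sides; these are the same argument, and your explicit treatment of the ``in particular'' clause and of the degenerate constants only makes explicit what the paper leaves implicit.
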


\begin{proof}
Fact~\ref{Zal strong convexity iff} asserts that each $f_i^*$ is $\epsilon$-convex. Consequently, Theorem~\ref{strong convexity dominance} and formula~\eqref{proximal conjugate} guaranty that $p_{\mu^{-1}}({\bf f^*},{\boldsymbol \lambda})=p_\mu({\bf f},{\boldsymbol \lambda})^*$ is $\epsilon$-convex which, in turn, imply that $p_\mu({\bf f},{\boldsymbol \lambda})=p_\mu({\bf f},{\boldsymbol \lambda})^{**}=p_{\mu^{-1}}({\bf f^*},{\boldsymbol \lambda})^*$ is $\epsilon$-smooth.
\end{proof}

\begin{theorem}\label{Lip grad}\textbf{\emph{(having a Lipschitz gradient is dominant w.r.t. $p_\mu$)}} 
Suppose that for each $i\in I$, $f_i:\HH\to\RX$ is proper, lower semicontinuous, convex and set $A_i=\partial f_i$. Suppose further that there exist $\varnothing\neq I_0\subseteq I$ such that  for every $i\in I_0$, $f_{i}$ is Fr\'{e}chet differentiable on $\HH$, $\nabla f_{i}$ is $\epsilon_{i}$-Lipschitz and for every $i\notin I_0$ set $\epsilon_i=\infty$. Then $p_\mu({\bf f},{\boldsymbol \lambda})$ is Fr\'{e}chet differentiable on $\HH$ and $\average=\nabla p_\mu({\bf f},{\boldsymbol \lambda})$ is $\epsilon$-Lipschitz where $\epsilon=r_\mu(\boldsymbol{\epsilon},\boldsymbol{\lambda})$. 
\end{theorem}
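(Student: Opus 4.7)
The plan is to reduce this statement to Theorem~\ref{strong smoothness dominance} via the self-dual equivalence in Fact~\ref{Zal strong convexity iff} between Lipschitz continuity of a convex function's gradient and strong smoothness of the function. The identity~\eqref{r inverse} will then convert the resulting $r_{\mu^{-1}}$-constant back into the advertised $r_\mu$-constant $\epsilon$.

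First I would translate the hypothesis. For each $i\in I_{0}$, applying Fact~\ref{Zal strong convexity iff} to $f_{i}^{*}$ (and using biconjugation $f_{i}=f_{i}^{**}$) shows that $\nabla f_{i}$ being $\epsilon_{i}$-Lipschitz is equivalent to $f_{i}$ being $\epsilon_{i}^{-1}$-smooth; for $i\notin I_{0}$, the convention $\epsilon_{i}=\infty$ gives $\epsilon_{i}^{-1}=0$, and lower semicontinuous convexity of $f_{i}$ makes it trivially $0$-smooth. Hence each $f_{i}$ is $\epsilon_{i}^{-1}$-smooth and Theorem~\ref{strong smoothness dominance}, applied with smoothness constants $(\epsilon_{i}^{-1})_{i\in I}$, yields that $p_\mu(\bold{f},\boldsymbol{\lambda})$ is $\delta$-smooth with
\[
\delta \;=\; r_{\mu^{-1}}(\boldsymbol{\epsilon^{-1}},\boldsymbol{\lambda}) \;=\; r_{\mu}(\boldsymbol{\epsilon},\boldsymbol{\lambda})^{-1} \;=\; \epsilon^{-1},
\]
where the middle equality is the inversion identity~\eqref{r inverse}.

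Next, since $I_{0}\neq\varnothing$ there is some $i_{0}$ with $\epsilon_{i_{0}}<\infty$, which forces $\epsilon=r_{\mu}(\boldsymbol{\epsilon},\boldsymbol{\lambda})<\infty$ and therefore $\epsilon^{-1}>0$. Consequently $p_\mu(\bold{f},\boldsymbol{\lambda})$ is $\epsilon^{-1}$-\emph{strongly} smooth. A second application of Fact~\ref{Zal strong convexity iff} — this time the implication (iii)$\Rightarrow$(iv) applied to the conjugate $[p_\mu(\bold{f},\boldsymbol{\lambda})]^{*}$, which is $\epsilon^{-1}$-strongly convex — gives that $p_\mu(\bold{f},\boldsymbol{\lambda})=[p_\mu(\bold{f},\boldsymbol{\lambda})]^{**}$ is Fréchet differentiable on all of $\HH$ with $\epsilon$-Lipschitz gradient. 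The identification $\average = \partial p_\mu(\bold{f},\boldsymbol{\lambda}) = \nabla p_\mu(\bold{f},\boldsymbol{\lambda})$ is then immediate from equation~\eqref{sub of proximal average} together with the differentiability just established.

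The only real obstacle is bookkeeping around the $\epsilon_{i}=\infty$ convention and the dictionary between Lipschitzness of the gradient and smoothness: one must keep track that $\nabla f$ being $L$-Lipschitz corresponds to $f$ being $L^{-1}$-smooth (not $L$-smooth), and must verify that the $r_\mu$-arithmetic of~\eqref{r inverse} behaves correctly on the extended range $[0,\infty]$ when some $\epsilon_{i}$ are $\infty$. The assumption $I_{0}\neq\varnothing$ is precisely what prevents $\epsilon^{-1}$ from collapsing to $0$, and so exactly what allows the strong-smoothness conclusion from Theorem~\ref{strong smoothness dominance} to be upgraded to Fréchet differentiability via Fact~\ref{Zal strong convexity iff}(iv).
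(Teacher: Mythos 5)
Your proposal is correct and, once unwound, is essentially the paper's own argument: the paper applies Theorem~\ref{strong convexity dominance} to the conjugates ${\bf f^*}$ with parameter $\mu^{-1}$ and then conjugates back via~\eqref{proximal conjugate} and Fact~\ref{Zal strong convexity iff}(iv), while you instead cite Theorem~\ref{strong smoothness dominance} on the $f_i$ themselves --- but that theorem is itself proved by exactly this conjugation route, so the two proofs coincide modulo one layer of packaging. Your handling of the $\epsilon_i=\infty$ convention, the inversion identity~\eqref{r inverse}, and the role of $I_0\neq\varnothing$ in upgrading smoothness to Fr\'{e}chet differentiability with an $\epsilon$-Lipschitz gradient all match the paper's reasoning.
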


\begin{proof}
Fact~\ref{Zal strong convexity iff} guarantees that for each $i\in I$, $f_i^*$ is $\frac{1}{\epsilon_i}$-convex. By applying Theorem~\ref{strong convexity dominance} we see that $p_{\mu^{-1}}({\bf f^*},{\boldsymbol \lambda})$ is $\frac{1}{\epsilon}$-convex where $\frac{1}{\epsilon}=r_{\mu^{-1}}(\boldsymbol{\epsilon}^{-1},\boldsymbol{\lambda})$. Furthermore, since $I_0\neq\varnothing$, we see that $0<\frac{1}{\epsilon}<\infty$ and, consequently, that $p_{\mu^{-1}}({\bf f^*},{\boldsymbol \lambda})$ is $\frac{1}{\epsilon}$-strongly convex. By applying Fact~\ref{Zal strong convexity iff} together with equation~\eqref{sub of proximal average} and equation~\eqref{proximal conjugate} we conclude that  $p_\mu({\bf f},{\boldsymbol \lambda})=p_{\mu^{-1}}({\bf f}^*,{\boldsymbol \lambda})^*$ is Fr\'{e}chet differentiable on $\HH$ and that $\average=\partial p_\mu({\bf f},{\boldsymbol \lambda})=\nabla p_\mu({\bf f},{\boldsymbol \lambda})$ is $\epsilon$-Lipschitz where, by applying formula~\eqref{r inverse}, $\epsilon=r_{\mu^{-1}}(\boldsymbol{\epsilon}^{-1},\boldsymbol{\lambda})^{-1}=r_\mu(\boldsymbol{\epsilon},\boldsymbol{\lambda})$.
\end{proof}

\begin{remark}
A remark regarding the sharpness of our results is now in order. Although in particular cases, our constants of monotonicity, Lipschitzness and cocoercivity of the resolvent average as well as the constants of convexity and smoothness of the proximal average are sharp, in general, they are not. Furthermore, such sharp constants cannot be determined only by $\mu$, the weight $\boldsymbol{\lambda}$ and the given constant $\boldsymbol{\epsilon}$ even if the latter is sharp, but depend also on the averaged objects as well. We now illustrate these situations by the following examples. 
\end{remark}

\begin{example}\label{strongly firmly example}
For each $i\in I$ let $0<\epsilon_i<\infty$, $f_i=\epsilon_i q$ and $A_i=\epsilon_i\Id=\nabla f_i$. Then for each $i\in I$, $A_i$ is $\epsilon_i$-strongly monotone which is equivalent to $f_i$ being $\epsilon$-strongly convex. As we have seen by formulae~\eqref{r and R} and~\eqref{r and p} we have $\average=r_\mu(\boldsymbol{\epsilon},\boldsymbol{\lambda})\Id$ which is $r_\mu(\boldsymbol{\epsilon},\boldsymbol{\lambda})$-strongly monotone and $p_\mu(\bold{f},\boldsymbol{\lambda})=r_\mu(\boldsymbol{\epsilon},\boldsymbol{\lambda})q$ which is $r_\mu(\boldsymbol{\epsilon},\boldsymbol{\lambda})$-strongly convex. Furthermore, for any $\epsilon'>r_\mu(\boldsymbol{\epsilon},\boldsymbol{\lambda})$ we see that $\average-\epsilon'\Id$ is not monotone, that is, $\average$ is not $\epsilon'$-monotone, and that $p_\mu(\bold{f},\boldsymbol{\lambda})-\epsilon' q$ is not convex, that is, $p_\mu(\bold{f},\boldsymbol{\lambda})$ is not $\epsilon'$-convex. Thus, we conclude that $\epsilon=r_\mu(\boldsymbol{\epsilon},\boldsymbol{\lambda})$ is a sharp constant of strong monotonicity of the resolvent average and as a constant of strong convexity of the proximal average in this example.
\end{example}

\begin{example}\label{strong sharpness counter for subdifferentials}
We consider the settings in Example~\ref{uniform counter example}. We see that neither $f$ nor $f^*$ is strongly convex nor strongly smooth. Also, neither $\partial f$ nor $\partial f^*$ is strongly monotone or Lipschitz continuous. However, $p(f,f^*)=q$ is 1-strongly convex and 1-strongly smooth and $\mathcal{R}(\partial f,\partial f^*)=\Id$ is 1-strongly monotone and 1-Lipschitz. Letting $\epsilon_1=\epsilon_2=0$ (which is the only possible constant of monotonicity of $\partial f$ and of $\partial f^*$, and the only possible constant of convexity and smoothness of $f$ and $f^*$), we see that $\epsilon=r(\boldsymbol{\epsilon},\boldsymbol{\lambda})=0$. Thus, we conclude that $\epsilon$ is not sharp in this case. On the other hand, $\epsilon=0$ will be the sharp constant of convexity when we consider $p(0,0)=0$ and a sharp constant of monotonicity and Lipschitzness of $\mathcal{R}(0,0)=0$ where the given $\mu$, $\boldsymbol{\lambda}$ and $\boldsymbol{\epsilon}$ are the same. Thus we conclude that the sharp constants of monotonicity, Lipschitzness and cocoercivity of the resolvent average as well as the sharp constants of convexity and smoothness of the proximal average cannot, in general, be determined only by the corresponding constants of the averaged objects, the parameter $\mu$ and the weight $\boldsymbol{\lambda}$. 
\end{example}

\begin{example}\label{strong sharpness counter outside subdeifferentials} 
Outside the class of subdifferential operators, we consider the following settings: let $\lambda_1=\lambda_2=\frac{1}{2}$, let $A_1$ be the counterclockwise rotation in the plane by the angle $0<\alpha\leq\frac{\pi}{2}$ and let $A_2=A_1^{-1}$ be the clockwise rotation by the angle $\alpha$. Then $A_1$ and $A_2$ are $\epsilon$-monotone where $\epsilon=\cos\alpha<1$ is sharp. In particular, for $\alpha=\frac{\pi}{2}$, $A_1$ and $A_2$ are not strongly monotone. However, by employing formula~\eqref{self dual is I}, we see that $\mathcal{R}(A_1,A_2)=\Id$ which is 1-strongly monotone. On the other hand, $\mathcal{R}(A_1,A_1)=A_1$ is $\epsilon$-monotone where, again, $\epsilon=\cos\alpha$ is sharp. Thus, this is another demonstration to the fact that the sharp constant of monotonicity of the resolvent average does not depend only on the constants of monotonicity of the averaged mappings, the weight $\boldsymbol{\lambda}$ and the parameter $\mu$. We will analyze a variant of this example in order to discuss Lipschitzness outside the class of subdifferential operators in Example~\ref{scaled rotations}. 
\end{example}

\subsection{Disjoint injectivity}
We recall that the mapping $A:\HH\rightrightarrows\HH$ is said to be \emph{disjointly injective} if for any two distinct point $x$ and $y$ in $\HH$, $Ax\cap Ay=\varnothing$.

\begin{theorem}\label{t:disjinjdom}\textbf{\emph{(disjoint injectivity is dominant)}}
Suppose that for each $i\in I$, $A_i:\HH\rightrightarrows\HH$ is maximally monotone. If there exists $i_0\in I$ such that $A_{i_0}$ is disjointly injective, then $\average$ is disjointly injective.
\end{theorem}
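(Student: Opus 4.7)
The plan is to reduce disjoint injectivity to single-valuedness of the inverse, and then invoke Theorem~\ref{single valued} together with the inversion formula of Theorem~\ref{mainresult}.

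First I would observe the elementary equivalence: a mapping $A\colon\HH\rightrightarrows\HH$ is disjointly injective if and only if $A^{-1}$ is at most single-valued. Indeed, $u\in Ax\cap Ay$ is equivalent to $\{x,y\}\subseteq A^{-1}(u)$, so $Ax\cap Ay=\varnothing$ for $x\neq y$ exactly when $A^{-1}(u)$ is either empty or a singleton for every $u\in\HH$.

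Next, since $A_{i_0}$ is maximally monotone, so is $A_{i_0}^{-1}$, and the equivalence above turns the assumed disjoint injectivity of $A_{i_0}$ into the statement that $A_{i_0}^{-1}$ is at most single-valued. All the other $A_i$ are maximally monotone, hence so are the $A_i^{-1}$. I can therefore apply Theorem~\ref{single valued} to the family $\mathbf{A}^{-1}=(A_1^{-1},\ldots,A_n^{-1})$ (with parameter $\mu^{-1}$) to conclude that $\mathcal{R}_{\mu^{-1}}(\mathbf{A}^{-1},\boldsymbol{\lambda})$ is at most single-valued.

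Finally, I would invoke the inversion formula~\eqref{niceselfdual} of Theorem~\ref{mainresult}, which gives
\begin{equation*}
\big(\mathcal{R}_\mu(\mathbf{A},\boldsymbol{\lambda})\big)^{-1}=\mathcal{R}_{\mu^{-1}}(\mathbf{A}^{-1},\boldsymbol{\lambda}).
\end{equation*}
The right-hand side being at most single-valued means, by the elementary equivalence above applied with $A=\mathcal{R}_\mu(\mathbf{A},\boldsymbol{\lambda})$, that $\mathcal{R}_\mu(\mathbf{A},\boldsymbol{\lambda})$ is disjointly injective, completing the proof. No step looks to be a genuine obstacle here; the only thing to be careful about is checking that maximal monotonicity is preserved under inversion so that Theorem~\ref{single valued} applies to $\mathbf{A}^{-1}$, and that is immediate from the definition of $\gra A^{-1}$.
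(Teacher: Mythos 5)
Your proof is correct, but it takes a genuinely different route from the paper's. The paper argues directly on the resolvent side: it invokes the characterization (from \cite[Theorem~2.1(ix)]{bmw12}) that a maximally monotone $A$ is disjointly injective if and only if $J_A$ is strictly nonexpansive, notes that disjoint injectivity is invariant under multiplication by $\mu$, and then observes that the convex combination $J_{\mu\average}=\sum_{i\in I}\lambda_i J_{\mu A_i}$ of nonexpansive maps, one of which is strictly nonexpansive, is strictly nonexpansive. Your argument instead exploits duality: the elementary equivalence that $A$ is disjointly injective exactly when $A^{-1}$ is at most single-valued, the already-established dominance of single-valuedness (Theorem~\ref{single valued}), and the inversion formula $\big(\average\big)^{-1}=\averageinverse$ of Theorem~\ref{mainresult}. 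All three ingredients check out (in particular, Theorem~\ref{single valued} applies with parameter $\mu^{-1}$ to the maximally monotone family $\mathbf{A}^{-1}$, and Theorem~\ref{mainresult} needs no monotonicity at all), so the proof is sound. What your route buys is that it needs no external characterization of disjoint injectivity via strict nonexpansiveness: it runs entirely on results internal to the paper, and it illustrates a general transfer principle --- any dominant property yields dominance of the corresponding ``inverse'' property via Theorem~\ref{mainresult} --- which is precisely the pattern the paper itself uses to deduce Corollary~\ref{cocoecivity dominance} (cocoercivity) from Theorem~\ref{strong monotonicity dominance}. What the paper's route buys is a self-contained quantitative picture on the resolvent side (strict nonexpansiveness of $J_{\mu\average}$), in the same style as the other dominance proofs of that section.
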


\begin{proof}
We recall that the maximally monotone mapping $A:\HH\rightrightarrows\HH$ is disjointly injective if and only if $J_A$ is strictly nonexpansive, that is, for any two distinct points $x$ and $y$ in $\HH$, $\|J_A x-J_A y\|<\|x-y\|$ (see \cite[Theorem 2.1(ix)]{bmw12}). We also note that $A$ is disjointly injective if and only if $\mu A$ is disjointly injective. Thus, since for every $i\in I$, $J_{\mu A_i}$ is nonexpansive and since $J_{\mu A_{i_0}}$ is strictly nonexpansive, then the convex combination $J_{\mu\average}=\sum_{i\in I}\lambda_i J_{\mu A_i}$ is strictly nonexpansive, which, in turn, implies that $\average$ is disjointly injective.
\end{proof}

\begin{remark}
Let $\HH$ be finite-dimensional and let $f:\HH\to\RX$ be proper, lower semicontinuous and convex function. Then the disjoint injectivity of $\partial f$ is equivalent to the essential strict convexity of $f$ which is equivalent to the essential smoothness of $f^*$ (see \cite[Theorem 26.3]{Rock}). Thus, in the finite-dimensional case, once again, we recover Corollary~\ref{ess smooth}, Corollary~\ref{ess conv} and Corollary~\ref{Leg}, that is, essential smoothness, essential strict convexity and Legendreness are dominant properties w.r.t. the proximal average.  
\end{remark}

\section{Recessive properties of the resolvent average}\label{recessive}

We begin this section with the following example of a recessive property:

\begin{example}
\textbf{(being a constant mapping is recessive)}
Suppose that for each $i\in I$, $A_i:\HH\rightrightarrows\HH$ is the constant mapping $x\mapsto z_i$. Then $\average$ is the constant mapping $x\mapsto\sum_{i\in I}\lambda_i z_i$. Indeed, the mapping $A:\HH\rightrightarrows\HH$ is the constant mapping $x\mapsto z$ if and only if $J_A$ is the shift mapping $x\mapsto x-z$. Thus, we see that $J_{\mu\average}=\sum_{i\in I}\lambda_i J_{\mu A_i}$ is the shift mapping $x\mapsto x-\mu\sum_{i\in I}\lambda_i z_i$ and, consequently, that $\average$ is the constant mapping $x\mapsto\sum_{i\in I}\lambda_i z_i$. However, if we let $A_1$ be any constant mapping, $A_2$ be any maximally monotone mapping which is not constant, $0<\lambda<1,\ \lambda_1=\lambda$ and $\lambda_2=1-\lambda$, then $J_{\mu A_1}$ is a shift, $J_{\mu A_2}$ is not a shift and  $J_{\mu\average}=\lambda_1 J_{\mu A_1}+(1-\lambda_1)J_{\mu A_2}$ is not a shift. Consequently, $\average$ is not a constant mapping. Summing up, we see that being a constant mapping is recessive w.r.t. the resolvent average.
\end{example}

\subsection{Linearity and affinity}
We begin our discussion with the following example:

\begin{example}
\label{linearcounter}
We set $f=\| \cdot\|$, $A_1 = \partial f$ and $A_2 = {\bf 0}$. Then
$$J_{A_1} x = \begin{cases}\begin{matrix} \left(1-\frac{1}{\|x\|}\right)x, & \text{if } \|x\|>1;
\\ 0, & \text{if } \|x\| \leq 1
\end{matrix}
\end{cases}$$
and $J_{A_2} = \Id$ (see \cite[Example 23.3 and Example 14.5]{BC2011}). We see that $J_{A_1}$ is not an affine relation and $J_{A_2}$ is linear. However, letting $0<\lambda<1,\ \lambda_1=\lambda$ and $\lambda_2=1-\lambda$, then
$$J_{\averageonelambda}x = \lambda J_{A_1}x + (1-\lambda) J_{A_2}x =\begin{cases}\begin{matrix} \left(1-\lambda \frac{1}{\|x\|}\right)x, & \text{if } \|x\|>1;
\\ (1-\lambda)x, & \text{if } \|x\| \leq 1,
\end{matrix}
\end{cases}$$
which is not an affine relation. Thus, by employing Fact~\ref{linear}, we conclude that $\averageonelambda$ is not an affine relation.
\end{example}

Example~\ref{linearcounter} demonstrates that linearity and affinity are not dominant properties w.r.t. the resolvent average. On the other hand, since the sums and inversions in the definition of $\average$ preserve linearity and affinity, we arrive at the following observation:  

\begin{corollary}[Linearity and affinity are recessive]\label{linearrelation}
Suppose that for each $i\in I$, $A_i:\HH\rightrightarrows\HH$ is a maximally monotone linear (resp. affine) relation. Then $\average$ is a maximally monotone linear (resp. affine) relation.
\end{corollary}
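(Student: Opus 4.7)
The plan is to unpack Definition~\ref{resolvent average def}, namely
\[
\average=\Big(\sum_{i\in I}\lambda_{i}(A_{i}+\mu^{-1}\Id)^{-1}\Big)^{-1}-\mu^{-1}\Id,
\]
and verify that each operation appearing in this expression preserves linearity (resp. affinity) of a relation. First I would observe that $\mu^{-1}\Id$ is a linear relation, so Fact~\ref{linear} gives that $A_i+\mu^{-1}\Id$ is a linear (resp. affine) relation for every $i\in I$. Inverting preserves both properties, since swapping coordinates in $\HH\times\HH$ carries a linear subspace to a linear subspace and an affine subspace to an affine subspace. Multiplying by the positive scalar $\lambda_i$ scales the range coordinate of the graph and again preserves linearity and affinity. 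Another application of Fact~\ref{linear} handles the sum, a second inversion preserves the property, and the final subtraction of $\mu^{-1}\Id$ preserves it as well. Hence $\average$ is a linear (resp. affine) relation. Maximal monotonicity is then automatic from Theorem~\ref{neededlater}, since all of the $A_i$ are assumed maximally monotone.

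That these properties are recessive rather than dominant is already witnessed by Example~\ref{linearcounter}: there $A_2={\bf 0}$ is linear, yet $\averageonelambda$ fails to be an affine relation (since $J_{\averageonelambda}$ is not an affine mapping and one applies Fact~\ref{linear} contrapositively). So the contents of the corollary reduce purely to the inheritance statement.

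The main point to articulate carefully is the affine case, which is not literally stated in Fact~\ref{linear}. However, each of the closure properties used above (inverse, positive scalar multiplication, and finite sums) holds for affine relations by direct graph-level verification: the inverse swaps coordinates, scalar multiplication scales one coordinate, and the Minkowski sum of affine subspaces of $\HH\times\HH$ is again an affine subspace. Once these parallel closure properties are noted, the argument for affinity is identical in form to the one for linearity, and the proof is a one-line bookkeeping check against the definition of $\average$.
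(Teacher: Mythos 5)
Your proposal is correct and follows essentially the same route as the paper: the paper justifies this corollary as an immediate observation from the fact that the sums and inversions appearing in the definition of $\average$ preserve linearity and affinity (via Fact~\ref{linear}), with maximal monotonicity supplied by Theorem~\ref{neededlater} and non-dominance witnessed by Example~\ref{linearcounter}. Your write-up merely makes explicit the bookkeeping (scalar multiples, the affine analogues of Fact~\ref{linear}) that the paper leaves implicit.
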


\subsection{Rectangularity and paramonotonicity}

We recall Definition~\ref{D:para rec def} of rectangular and paramonotone mappings and consider the following example: 
\begin{example}\label{ParaRecDominant}
In $\RR^2$, let $A_{1}=N_{\RR\times \{0\}}.$ Then by Fact~\ref{NCresolvent}, $J_{A_1}$ is the projection on $\RR\times \{0\}$. Since $A_1$ is a subdifferential of a proper, lower semicontinuous and convex function, it is rectangular (see \cite[Example 24.9]{BC2011}) and paramonotone (see \cite[Example 22.3]{BC2011}). Let $A_{2}:\RR^2\rightarrow\RR^2$ be the counterclockwise rotation by $\pi/2$. Then by employing standard matrix representation we write
$$J_{A_{1}}=P_{\RR\times \{0\}}=\begin{pmatrix}
1 & 0\\
0 & 0
\end{pmatrix},\   
A_{2}=\begin{pmatrix}
0 & -1\\
1 & 0
\end{pmatrix},\ 
A_{2+}=\frac{1}{2}(A_2+A_2^{\intercal})=0
\  \text{ and }\ \  J_{A_{2}}=\begin{pmatrix}
\tfrac{1}{2} & \tfrac{1}{2}\\[+0.8mm]
-\tfrac{1}{2} & \tfrac{1}{2}
\end{pmatrix}.
$$
Letting $\lambda_1=\lambda_2=\tfrac{1}{2}$, we obtain
$$
\mathcal{R}({\bf{A}})  =(\frac{1}{2} J_{A_{1}}+\frac{1}{2}J_{A_{2}})^{-1}-\Id
= \begin{pmatrix}
0 & -1\\
1 & 2
\end{pmatrix}
\ \ \ \text{and}\ \ \ 
\mathcal{R}({\bf{A}})_+=\frac{1}{2} \big(\mathcal{R}(\bf{A})+\mathcal{R}(\bf{A})^{\intercal}\big)=\begin{pmatrix}
0 & 0\\
0 & 2
\end{pmatrix}.
$$
By employing Fact~\ref{f:pararecsame} we see that $A_2$ as well as $\averageonelambda$ are neither rectangular nor paramonotone. 
\end{example}

In view of Example~\ref{ParaRecDominant} it is clear that rectangularity and paramonotonicity are not dominant properties w.r.t. the resolvent average. We now prove the recessive nature of these properties. We begin with rectangularity.

\begin{theorem}[rectangularity is recessive]
Suppose that for each $i\in I$, $A_i:\HH\rightrightarrows\HH$ is rectangular and maximally monotone. Then $\average$ is rectangular.
\end{theorem}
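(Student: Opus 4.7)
The plan is to combine the Fitzpatrick-domain inclusion from inequality~\eqref{resolvent average Fitzpatrick function domain} with the graph/domain/range inclusions from Theorem~\ref{neededlater}. Since rectangularity is characterized by $\dom A\times\ran A\subseteq\dom F_A$, the strategy is to trace $(x,v)\in\dom\average\times\ran\average$ through these inclusions until it lands in $\dom F_{\mu\average}$.

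First, I would recall that rectangularity of $A$ is preserved under positive scalar multiplication; equivalently, it is enough to prove that $\mu\average$ is rectangular. By Theorem~\ref{neededlater},
\[
\dom(\mu\average)\times\ran(\mu\average)\ \subseteq\ \Big(\sum_{i\in I}\lambda_i\dom\mu A_i\Big)\times\Big(\sum_{i\in I}\lambda_i\ran\mu A_i\Big).
\]
The next step I would use is the elementary observation that for Cartesian products
\[
\Big(\sum_{i\in I}\lambda_i\dom\mu A_i\Big)\times\Big(\sum_{i\in I}\lambda_i\ran\mu A_i\Big)\ =\ \sum_{i\in I}\lambda_i\big(\dom\mu A_i\times\ran\mu A_i\big),
\]
which holds because the components of a Minkowski sum of products may be chosen independently.

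Then, since each $A_i$ (and hence each $\mu A_i$) is rectangular, the characterization~\eqref{Fitzpatrick rectangular} gives $\dom\mu A_i\times\ran\mu A_i\subseteq\dom F_{\mu A_i}$ for every $i\in I$. Taking the $\boldsymbol\lambda$-convex combination and applying the Fitzpatrick-domain inclusion~\eqref{resolvent average Fitzpatrick function domain}, I obtain
\[
\sum_{i\in I}\lambda_i\big(\dom\mu A_i\times\ran\mu A_i\big)\ \subseteq\ \sum_{i\in I}\lambda_i\dom F_{\mu A_i}\ \subseteq\ \dom F_{\mu\average}.
\]
Chaining these three inclusions yields $\dom(\mu\average)\times\ran(\mu\average)\subseteq\dom F_{\mu\average}$, so $\mu\average$, and hence $\average$, is rectangular.

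I do not anticipate a genuine obstacle here: the whole argument is built from the two preparatory results already in place, namely Theorem~\ref{neededlater} and the Fitzpatrick-domain inclusion~\eqref{resolvent average Fitzpatrick function domain}. The only subtlety worth flagging explicitly is the product-of-sums identity above, which should be stated as a one-line justification so the reader does not confuse $\sum_i\lambda_i(C_i\times D_i)$ with the (generally smaller) set $\{\sum_i\lambda_i(c_i,d_i)\mid (c_i,d_i)\in\gra A_i\}$.
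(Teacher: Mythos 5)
Your proof is correct and follows essentially the same route as the paper: reduce to showing $\dom\mu\average\times\ran\mu\average\subseteq\dom F_{\mu\average}$, then chain the inclusions of Theorem~\ref{neededlater} with the rectangularity of each $\mu A_i$ and the Fitzpatrick-domain inclusion~\eqref{resolvent average Fitzpatrick function domain}. The only cosmetic difference is that the paper invokes the graph inclusion~\eqref{resolvents graph} directly where you use its domain/range consequences~\eqref{domain and range containment} together with the product-of-sums identity, which you rightly flag as the one point deserving explicit justification.
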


\begin{proof}
The mapping $A:\HH\rightrightarrows\HH$ is rectangular if and only if $\mu A$ is rectangular as can be seen from~\eqref{rectangular definition}. Thus, by employing~\eqref{Fitzpatrick rectangular} we see that $\average$ is rectangular as soon as $\dom\mu\average\times\ran\mu\average\subseteq\dom F_{\mu\average}$. Indeed, since for each $i\in I$, $\dom \mu A_i\times\ran\mu A_i\subseteq \dom F_{\mu A_i}$, by employing inclusion~\eqref{resolvents graph} and then inclusion~\eqref{resolvent average Fitzpatrick function domain} we arrive at
\begin{align*}
\dom\average\times\ran\average &\subseteq \sum_{i\in
I}\lambda_i(\dom\mu A_i\times\ran\mu A)\subseteq \sum_{i\in I}\lambda_i\dom F_{\mu A_i}\\
&\subseteq \dom F_{\average},
\end{align*}
as asserted.
\end{proof}

In order to prove that paramonotonicity is recessive, we will make use of the following result:

\begin{proposition}\label{convex combination of resolvents of paramonotone}
Suppose that for each $i\in I,\ T_i:\HH\to\HH$ is firmly nonexpansive and set $T=\sum_{i\in I}\lambda_i T_i$. Then:
\begin{enumerate}
\item If for each $i\in I$, given points $x$ and $y$ in $\HH$,
\begin{equation}\label{resolvent of paramonotone}
\|T_i x-T_i y\|^2=\scal{x-y}{T_i x-T_iy}\ \ \ \ \Rightarrow\ \ \ \ \begin{cases}
T_ix=T_i(T_ix+y-T_i y)\\
T_i y=T_i(T_i y+x-T_i x),
\end{cases}
\end{equation}
then $T$ also has property~\eqref{resolvent of paramonotone}.

\item\label{resolvent of paramonotone pluse injective} If there exists $i_0\in I$ such that $T_{i_0}$  has property~\eqref{resolvent of paramonotone} and is injective, then $T$ has property~\eqref{resolvent of paramonotone} and is injective.
\end{enumerate}
\end{proposition}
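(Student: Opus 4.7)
The plan is to hinge everything on Corollary~\ref{equality in convex combination of firmly nonexpansive}: whenever $\|Tx-Ty\|^2=\scal{x-y}{Tx-Ty}$, that corollary yields $T_i x - T_i y = Tx - Ty$ for every $i\in I$. This single fact does almost all of the work, because it transfers the ``equality case'' from $T$ to each $T_i$ and simultaneously forces the auxiliary arguments appearing in~\eqref{resolvent of paramonotone} to collapse: since $T_i x - T_i y = Tx - Ty$, one has $T_i x + y - T_i y = Tx + y - Ty$ and $T_i y + x - T_i x = Ty + x - Tx$, independently of $i$.

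For part (i), I would start with $x,y\in\HH$ such that $\|Tx-Ty\|^2=\scal{x-y}{Tx-Ty}$. Invoking the corollary produces $T_i x - T_i y = Tx - Ty$ for every $i\in I$, whence
$$\|T_i x - T_i y\|^2 = \|Tx-Ty\|^2 = \scal{x-y}{Tx-Ty} = \scal{x-y}{T_i x - T_i y},$$
so each $T_i$ satisfies the premise of~\eqref{resolvent of paramonotone}. Applying the property to each $T_i$ gives $T_i x = T_i(T_i x + y - T_i y) = T_i(Tx + y - Ty)$, where the last equality uses the coincidence of arguments noted above. Forming the $\boldsymbol{\lambda}$-convex combination yields
$$Tx = \sum_{i\in I}\lambda_i T_i x = \sum_{i\in I}\lambda_i T_i(Tx + y - Ty) = T(Tx + y - Ty),$$
and the companion identity $Ty = T(Ty + x - Tx)$ is obtained by the symmetric argument.

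For part (ii), injectivity of $T$ is immediate from Corollary~\ref{equality in convex combination of firmly nonexpansive}\ref{firmly injective}. To establish~\eqref{resolvent of paramonotone} for $T$, I again start from $\|Tx-Ty\|^2=\scal{x-y}{Tx-Ty}$ and apply the corollary to obtain $T_{i_0} x - T_{i_0} y = Tx - Ty$. In particular, $T_{i_0}$ satisfies its own premise at $(x,y)$, so $T_{i_0} x = T_{i_0}(T_{i_0} x + y - T_{i_0} y) = T_{i_0}(Tx + y - Ty)$. Injectivity of $T_{i_0}$ then forces $x = Tx + y - Ty$, which trivially yields $Tx = T(Tx + y - Ty)$; the other identity follows symmetrically.

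The only delicate bookkeeping is the observation that the arguments $T_i x + y - T_i y$ agree across $i$ because of the equality $T_i x - T_i y = Tx - Ty$; once that is noticed, the weighted sum in part (i) collapses and the injectivity shortcut in part (ii) becomes transparent. No obstacle beyond this is expected, since firm nonexpansiveness of $T$ itself (needed to apply Corollary~\ref{equality in convex combination of firmly nonexpansive}) is already granted by Corollary~\ref{average of firmly}.
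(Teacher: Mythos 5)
Your proposal is correct and follows essentially the same route as the paper: both parts hinge on Corollary~\ref{equality in convex combination of firmly nonexpansive} to force $T_ix-T_iy=Tx-Ty$, after which the convex combination collapses in (i) and injectivity of $T_{i_0}$ yields $x=Tx+y-Ty$ in (ii). You even make explicit a step the paper leaves implicit, namely verifying that each $T_i$ satisfies the premise of~\eqref{resolvent of paramonotone}.
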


\begin{proof}
(i) Suppose that $x$ and $y$ are points in $\HH$ such that $\|Tx-Ty\|^2=\scal{x-y}{Tx-Ty}$. Then Corollary~\ref{equality in convex combination of firmly nonexpansive} guarantees that $T_i x-T_i y=Tx-Ty$ for every $i\in I$. Employing property~\eqref{resolvent of paramonotone} of the $T_i$'s, we obtain the set of equalities in~\eqref{resolvent of paramonotone}. Consequently, we see that
$$
Tx=\sum_{i\in I}\lambda_i T_i x=\sum_{i\in I}\lambda_i T_i(T_ix+y-T_i y)=\sum_{i\in I}\lambda_i T_i(Tx+y-Ty)=T(Tx+y-Ty)
$$
and, similarly, that $Ty=T(Ty+x-Tx)$.
(ii) In the same manner as in the proof of (i), if $\|Tx-Ty\|^2=\scal{x-y}{Tx-Ty}$, then $T_i x-T_i y=Tx-Ty=T_{i_0}x-T_{i_0}y$ for every $i\in I$. Since $T_{i_0}$ is injective and has property~\eqref{resolvent of paramonotone}, then, in fact, $x-y=T_{i_0}x-T_{i_0}y$. Thus, we conclude that $x=Tx+y-Ty$, equivalently $y=Ty+x-Tx$, which confirms that $T$ has property~\eqref{resolvent of paramonotone}. Finally, Proposition~\ref{equality in convex combination of firmly nonexpansive}\ref{firmly injective} guarantees that $T$ is also injective.
\end{proof}

\begin{theorem}\label{paramonotonicity is recessive}\textbf{\emph{(paramonotonicity is recessive)}}
Suppose that for each $i\in I$, $A_i:\HH\rightrightarrows\HH$ is maximally monotone and paramonotone. Then $\average$ is paramonotone.
\end{theorem}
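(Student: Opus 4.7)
My plan is to reduce paramonotonicity of a maximally monotone operator to property~\eqref{resolvent of paramonotone} of its resolvent, and then invoke Proposition~\ref{convex combination of resolvents of paramonotone}\,(i).

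First I would establish the following equivalence: a maximally monotone operator $A:\HH\rightrightarrows\HH$ is paramonotone if and only if $J_A$ satisfies property~\eqref{resolvent of paramonotone}. To see this, use Minty's parametrization~\eqref{Minty's parametrization} to write an arbitrary pair of points $(x,v),(y,u)\in\gra A$ as $(J_Az,z-J_Az)$ and $(J_Aw,w-J_Aw)$ for suitable $z,w\in\HH$. A direct computation yields
\begin{equation*}
\scal{x-y}{v-u}=\scal{J_Az-J_Aw}{z-w}-\|J_Az-J_Aw\|^2,
\end{equation*}
so the hypothesis $\scal{x-y}{v-u}=0$ of paramonotonicity is equivalent to $\|J_Az-J_Aw\|^2=\scal{z-w}{J_Az-J_Aw}$, which is the premise of~\eqref{resolvent of paramonotone}. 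On the other hand, $(x,u)=(J_Az,w-J_Aw)\in\gra A$ is equivalent, by the definition of the resolvent, to $J_Az=J_A(J_Az+w-J_Aw)$, and similarly $(y,v)\in\gra A$ corresponds to $J_Aw=J_A(J_Aw+z-J_Az)$. Thus the two conclusions of paramonotonicity translate exactly into the two conclusions of~\eqref{resolvent of paramonotone} with $T=J_A$.

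Next, I would note that paramonotonicity is preserved under scaling: $A_i$ paramonotone implies $\mu A_i$ paramonotone, since scaling the second coordinate of the graph by the positive factor $\mu$ preserves the sign of inner products on the graph. Combined with the equivalence above, each $J_{\mu A_i}$ is firmly nonexpansive (by Fact~\ref{f:Minty}) and satisfies property~\eqref{resolvent of paramonotone}. Proposition~\ref{convex combination of resolvents of paramonotone}\,(i) then implies that $J_{\mu\average}=\sum_{i\in I}\lambda_iJ_{\mu A_i}$ also satisfies~\eqref{resolvent of paramonotone}. Reversing the translation, $\mu\average$ is paramonotone, and scaling once more shows that $\average$ is paramonotone.

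The substantive step is the equivalence between paramonotonicity of $A$ and property~\eqref{resolvent of paramonotone} for $J_A$; once this dictionary is in place, the theorem is an immediate application of the already-proven Proposition~\ref{convex combination of resolvents of paramonotone}. I expect no serious obstacle beyond a careful Minty-parametrization computation; the main pitfall to watch for is handling the factor $\mu$ consistently, which is why I verify separately that paramonotonicity is invariant under positive scalar multiplication.
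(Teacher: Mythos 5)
Your proposal is correct and takes essentially the same route as the paper: both reduce paramonotonicity of a maximally monotone operator to property~\eqref{resolvent of paramonotone} of its resolvent, apply Proposition~\ref{convex combination of resolvents of paramonotone}(i) to $J_{\mu\average}=\sum_{i\in I}\lambda_i J_{\mu A_i}$, and dispose of the parameter $\mu$ via the scaling invariance of paramonotonicity. The only difference is that you prove the resolvent characterization yourself through Minty's parametrization (your computation is correct), whereas the paper simply cites it from \cite[Theorem 2.1(xv)]{bmw12}.
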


\begin{proof}
First we consider the case $\mu=1$. We recall that the maximally monotone mapping $A:\HH\rightrightarrows\HH$ is paramonotone if and only if $J_A$ has property~\eqref{resolvent of paramonotone} (see \cite[Theorem 2.1(xv)]{bmw12}). Thus, since each $J_{A_i}$ has property~\eqref{resolvent of paramonotone}, then Proposition~\ref{convex combination of resolvents of paramonotone}(i) guarantees that $J_{\averageonelambda}=\sum_{i\in I}\lambda_i J_{A_i}$ also has property~\eqref{resolvent of paramonotone}, which, in turn, implies that $\averageonelambda$ is paramonotone. For arbitrary $0<\mu$, we note that the mapping $A:\HH\rightrightarrows\HH$ is paramonotone if and only if $\mu A$ is paramonotone. Since for each $i\in I$, $\mu A_i$ is paramonotone, so is $\mathcal{R}\bold(\mu A,\boldsymbol{\lambda})$. Consequently, formula~\eqref{scaled average} implies that $\mu^{-1}\mathcal{R}(\bold\mu A,\boldsymbol{\lambda})=\average$ is paramonotone.
\end{proof}

\subsection{$k$-cyclic monotonicity and cyclic monotonicity}
We recall that the mapping $A:\HH\rightrightarrows\HH$ is said to be \emph{$k$-cyclically monotone}, where $k\in\{2,3,\dots\}$, if any $k$ pairs $(x_1,u_1),\dots,(x_k,u_k)\in\gr A$, letting $x_{k+1}=x_1$, satisfies $0\leq\sum_{i=1}^k\scal{u_i}{x_i-x_{i+1}}$. The mapping $A$ is said to be cyclically monotone if it is $k$-cyclically monotone for every $k\in\{2,3,\dots\}$. Rockafellar's well known characterization from \cite{Rock cyc} asserts that proper, maximally monotone and cyclically monotone mappings are precisely the subdifferentials of proper, convex and lower semicontinuous functions. 

\begin{example}
\label{ex:cyclicmonocounter}
Let $\HH = \RR^2$. We set $n=2,\ {\boldsymbol \lambda}=(1/3,2/3)$, we let $A_1$ be the identity and we let $A_2$ be the mapping obtained by the counter-clockwise rotation by $\pi/2$. Then $A_1$ is cyclically monotone while $A_2$ is monotone but not $3$-cyclically monotone (see \cite[Example 4.6]{BBBRW} for further discussion of $k$-cyclic monotonicity of rotations by $\pi/k$). Then, by employing matrix representation, we obtain
$$
\averageonelambda = \frac{1}{13}\begin{pmatrix}5 & -12\\12 & 5\end{pmatrix}.
$$
We now let $x_1=x_4=0$, $x_2=e_1$ and $x_3=e_2$ where $(e_1,e_2)$ is the standard basis of $\RR^2$. Then
$$
\sum_{i=1}^3\scal{\averageonelambda x_i}{x_i-x_{i+1}}=\scal{\averageonelambda e_1}{e_1-e_2}+\scal{\averageonelambda e_2}{e_2}=-2/13<0.
$$
Thus, we see that $\averageonelambda$ is not $3$-cyclically monotone.
\end{example}

In view of Example~\ref{ex:cyclicmonocounter}, the property of $k$-cyclic monotonicity is not dominant w.r.t. the resolvent average. In order to conclude the recessive nature of $k$-cyclic monotonicity and of cyclic monotonicity we recall the following: By employing cyclic monotonicity, the convexity of the set of proximal mappings was recovered in \cite[Theorem 6.7]{BBBRW}. In other words, if we suppose that for each $i\in I$, $A_i:\HH\rightrightarrows\HH$ is maximally monotone and cyclically monotone, then $\averageonelambda$ is maximally monotone and cyclically monotone. Since the proof of this result was actually carried out for every fixed $k$, it, in fact, holds also for $k$-cyclically monotone mappings. Summing up, we arrive at the following conclusion:

\begin{theorem}
\textbf{\emph{($k$-cyclic monotonicity and cyclic monotonicity are recessive)}}
Suppose that for each $i\in I$, $A_i:\HH\rightrightarrows\HH$ is maximally monotone. If for every $i\in I$, $A_i$ is $k$-cyclically monotone, then $\average$ is $k$-cyclically monotone. If for every $i\in I$, $A_i$ is cyclically monotone, then $\average$ is cyclically monotone.
\end{theorem}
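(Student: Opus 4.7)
The plan mirrors the structure of Theorem~\ref{paramonotonicity is recessive}: reduce the statement to the case $\mu=1$ via the scaling identity~\eqref{scaled average}, and then invoke the existing result \cite[Theorem 6.7]{BBBRW}. The main observation that makes the reduction work is that for any mapping $A:\HH\rightrightarrows\HH$ and any $\mu>0$, $A$ is $k$-cyclically monotone if and only if $\mu A$ is, since for every $k$-cycle of points $(x_j,u_j)\in\gra A$ (with $x_{k+1}=x_1$) one has $\sum_{j=1}^k\scal{\mu u_j}{x_j-x_{j+1}}=\mu\sum_{j=1}^k\scal{u_j}{x_j-x_{j+1}}$. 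Hence $k$-cyclic monotonicity (and therefore cyclic monotonicity) is invariant under multiplication of the operator by a positive scalar.

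For $\mu=1$, the resolvent identity~\eqref{resolventidentity} expresses $J_{\averageonelambda}=\sum_{i\in I}\lambda_i J_{A_i}$ as a convex combination of the resolvents of the $A_i$'s. The content of \cite[Theorem 6.7]{BBBRW} is exactly that any such convex combination of resolvents of maximally monotone cyclically monotone operators is again the resolvent of a maximally monotone cyclically monotone operator. As the authors remark, the proof there is carried out estimate by estimate at each fixed order $k$, so the same argument shows that when each $A_i$ is $k$-cyclically monotone the operator $\averageonelambda$ is $k$-cyclically monotone. Thus I would first invoke this fact to conclude the $\mu=1$ case.

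For arbitrary $\mu>0$, the reduction goes as follows. Given that each $A_i$ is $k$-cyclically monotone, each $\mu A_i$ is $k$-cyclically monotone by the scaling observation above, so the $\mu=1$ case yields that $\mathcal{R}(\mu\bold{A},\boldsymbol{\lambda})$ is $k$-cyclically monotone. Applying formula~\eqref{scaled average}, we have $\average=\mu^{-1}\mathcal{R}(\mu\bold{A},\boldsymbol{\lambda})$, and multiplying back by $\mu^{-1}>0$ again preserves $k$-cyclic monotonicity. The cyclic case follows by taking this conclusion for every $k\in\{2,3,\ldots\}$; alternatively, one may shortcut it via Rockafellar's theorem: each $A_i=\partial f_i$ for some proper, lower semicontinuous, convex $f_i$, so by~\eqref{sub of proximal average} $\average=\partial p_\mu(\bold{f},\boldsymbol{\lambda})$ and Fact~\ref{resolvents of subs and proximal reformulation} together with Rockafellar identify this as a subdifferential, hence cyclically monotone.

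There is no serious obstacle to overcome here; the genuine work (showing that convex combinations of resolvents of $k$-cyclically monotone operators are resolvents of $k$-cyclically monotone operators) was carried out in \cite{BBBRW}, and the remaining task is just the scalar bookkeeping required to accommodate the parameter $\mu$. The only point that requires care is confirming that the BBBRW argument is genuinely carried out order-by-order, which is stated explicitly in the text preceding the theorem.
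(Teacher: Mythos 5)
Your proposal is correct and follows essentially the same route as the paper: invoke \cite[Theorem 6.7]{BBBRW} (noting its proof works at each fixed order $k$) for the case $\mu=1$, then handle general $\mu$ via the invariance of $k$-cyclic monotonicity under positive scaling together with formula~\eqref{scaled average}. Your additional shortcut for the cyclic case via Rockafellar's characterization and the proximal average is a valid but inessential variant.
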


\begin{proof}
The case where $\mu=1$ follows from our discussion above. For arbitrary $\mu>0$, we note that the mapping $A:\HH\rightrightarrows\HH$ is $k$-cyclically monotone (cyclically monotone) if and only if the mapping $\mu A$ is $k$-cyclically monotone (cyclically monotone). Thus, we see that $\mathcal{R}(\mu\bf{A},\boldsymbol{\lambda})$ is $k$-cyclically monotone (cyclically monotone), and, consequently, by employing formula~\eqref{scaled average}, so is $\mu^{-1}\mathcal{R}(\mu\bf A,\boldsymbol{\lambda})=\average$.
\end{proof}

\subsection{Weak sequential closedness of the graph}
The mapping $T:\HH\to\HH$ is said to be \emph{weakly sequentially continuous} if it maps a weakly convergent sequence to a weakly convergent sequence, that is, $x_n\rightharpoonup x\ \Rightarrow\ Tx_n\rightharpoonup Tx$. We recall that the maximally monotone mapping $A:\HH\rightrightarrows\HH$ has a weakly sequentially closed graph if and only if $J_A$ is weakly sequentially continuous (see \cite[Theorem 2.1(xxi)]{bmw12}). 

\begin{example}
Within the settings of Example~\ref{linearcounter}, we consider the case where $\HH=\ell^2(\NN)$ and $\{e_1,e_2,\dots\}$ is its standard basis. Then, for every $n\in\NN$, we set $x_n=e_1+e_n$. Consequently, $x_n\rightharpoonup e_1$ and $J_{A_1}x_n=(1-1/\sqrt{2})x_n\rightharpoonup(1-1/\sqrt{2})e_1$ while $J_{A_1}e_1=0$. Thus, $J_{A_1}$ is not weakly sequentially continuous and $J_{A_2}$ is. Furthermore, 
\begin{align*}
J_{\averageonelambda}x_n&=\lambda_1 J_{A_1}x_n+\lambda_2 J_{A_2}x_n=\lambda_1(1-1/\sqrt{2})x_n+\lambda_2 x_n\\
 &\rightharpoonup \lambda_1(1-1/\sqrt{2})e_1+\lambda_2e_1=(1-\lambda_1/\sqrt{2})e_1.
\end{align*}
Since $J_{\averageonelambda}e_1=\lambda_2e_1$, we see that $J_{\averageonelambda}$ is not weakly sequentially continuous, which, in turn, implies that the graph of $\averageonelambda$ is not weakly sequentially closed, although, the graph of $A_2$ is.
\end{example}

We see that the weak sequential closedness of the graph is not a dominant property w.r.t. the resolvent average. 

\begin{corollary}
\textbf{\emph{(weak sequential closedness of the graph is recessive)}} Suppose that for each $i\in I$, $A_i:\HH\rightrightarrows\HH$ is maximally monotone with a weakly sequentially closed graph. Then $\average$  has a weakly sequentially closed graph.
\end{corollary}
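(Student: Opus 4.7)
The plan is to reduce the claim to the elementary fact that weak sequential continuity is preserved under finite convex combinations, using the characterization (quoted just before the corollary from \cite[Theorem 2.1(xxi)]{bmw12}) that a maximally monotone operator has a weakly sequentially closed graph if and only if its resolvent is weakly sequentially continuous.

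First I would note that for each $i\in I$ the operator $\mu A_i$ is maximally monotone with a weakly sequentially closed graph (multiplication by the positive scalar $\mu$ is a weak homeomorphism in the second coordinate, so weak sequential closedness is inherited from $A_i$). Applying the characterization, each $J_{\mu A_i}$ is weakly sequentially continuous. Theorem~\ref{neededlater} guarantees that $\average$ is maximally monotone, so by the same characterization it suffices to show that $J_{\mu\average}$ is weakly sequentially continuous.

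For this, let $x_n \weakly x$ in $\HH$. Since each $J_{\mu A_i}$ is weakly sequentially continuous, $J_{\mu A_i}x_n \weakly J_{\mu A_i}x$ for every $i\in I$. Because weak convergence is preserved by taking finite linear combinations, we obtain
\begin{equation*}
J_{\mu\average}x_n \;=\; \sum_{i\in I}\lambda_i J_{\mu A_i}x_n \;\weakly\; \sum_{i\in I}\lambda_i J_{\mu A_i}x \;=\; J_{\mu\average}x,
\end{equation*}
using the resolvent identity \eqref{resolventidentity} in both the first and last equalities. Hence $J_{\mu\average}$ is weakly sequentially continuous, and invoking the characterization once more yields that $\gr\average$ is weakly sequentially closed.

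There is no real obstacle here; the only point that deserves care is the harmless rescaling by $\mu$ (so that one may apply the characterization, which is stated for $J_A$, to the resolvents $J_{\mu A_i}$ and $J_{\mu\average}$ that actually appear in the definition \eqref{resolventidentity} of the resolvent average). The fact that the resolvent average is already known to be maximally monotone (Theorem~\ref{neededlater}) is essential, since the quoted characterization requires maximal monotonicity on both sides of the equivalence.
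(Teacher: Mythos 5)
Your proof is correct and takes essentially the same route as the paper's: both pass to resolvents via the characterization from \cite[Theorem 2.1(xxi)]{bmw12}, observe that the convex combination $J_{\mu\average}=\sum_{i\in I}\lambda_i J_{\mu A_i}$ of weakly sequentially continuous mappings is weakly sequentially continuous, and then translate back to the graph statement. If anything, your explicit handling of the rescaling by $\mu$ (both when applying the characterization to $\mu A_i$ and when returning from $\mu\average$ to $\average$) is more careful than the paper's proof, which passes over this point silently.
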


\begin{proof}
Since for each $i\in I$, $J_{\mu A_i}$ is weakly sequentially continuous, so is $J_{\mu\average}=\sum_{i\in }\lambda_i J_{\mu A_i}$. Consequently, $\averageonelambda$ has a weakly sequentially closed graph.
\end{proof}

\subsection{Displacement mappings}
The mapping $D:\HH\to\HH$ is said to be a \emph{displacement mapping} if there exists a nonexpansive mapping $N:\HH\to\HH$ such that $D=\Id-N$, in which case $D$ is maximally monotone.

\begin{example}\label{displacement not dominant}
Let $n=2,\ \lambda_1=\lambda_2=\frac{1}{2},\ 0<\alpha_1=\alpha,\ 0<\alpha_2=\beta,\ A_1=\alpha\Id$ and $A_2=\beta\Id$. Then by applying formula~\eqref{r and R} we obtain 
$$
\averageonelambda=r(\boldsymbol{\alpha},\boldsymbol{\lambda})\Id=\frac{\alpha+\beta+2\alpha\beta}{2+\alpha+\beta}\Id.
$$
Now we let $\alpha=2$ and $\beta=5$. Then $A_1$ is a displacement mapping, $A_2$ is not a displacement mapping and $\averageonelambda=3\Id$ is not a displacement mapping.
\end{example}

In view of Example~\ref{displacement not dominant}, we see that being a displacement mapping is not a dominant property w.r.t. the resolvent average. In order to prove that being a displacement mapping is recessive we will make use of the following result:

\begin{proposition}\label{displacement characterization}
The maximally monotone mapping $A:\HH\rightrightarrows\HH$ is $\frac{1}{2}$-strongly monotone if and only if $A^{-1}$ is a displacement mapping. 
\end{proposition}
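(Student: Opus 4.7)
The strategy is to recast ``$A^{-1}$ is a displacement mapping'' as the conjunction ``$A^{-1}:\HH\to\HH$ is single-valued with full domain, and $\Id-A^{-1}$ is nonexpansive,'' and then match this with $\thalb$-strong monotonicity of $A$ via a one-line algebraic identity.

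First I would record the bridging observation: for any single-valued mapping $T:\HH\to\HH$ with full domain, expanding
$$\|(\Id-T)x-(\Id-T)y\|^2 \leq \|x-y\|^2$$
yields that $\Id - T$ is nonexpansive if and only if $\thalb\|Tx-Ty\|^2\leq \scal{x-y}{Tx-Ty}$ for all $x,y\in\HH$, that is, $T$ is $\thalb$-cocoercive. This is the key algebraic bridge between the displacement condition and cocoercivity, and the proof is almost trivial.

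For the implication ``$\Rightarrow$,'' assume $A$ is maximally monotone and $\thalb$-strongly monotone. Cauchy--Schwarz applied to $\thalb\|x-y\|^2 \leq \scal{u-v}{x-y}$ for $(x,u),(y,v)\in\gr A$ shows that $A^{-1}$ is at most single-valued. To establish $\dom A^{-1}=\HH$ I would introduce $B:=A-\thalb\Id$: the monotonicity of $B$ is precisely the $\thalb$-strong monotonicity of $A$, while its maximality follows from $\ran(\Id+B)=\ran(\thalb\Id+A)=\HH$, which in turn is Minty's theorem (Fact~\ref{MintyThm}) applied to the maximally monotone $2A$. Then for any $u\in\HH$ the element $x:=J_{2B}(2u)$ solves $u\in A(x)$, so $A$ is surjective. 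Rewriting the $\thalb$-strong monotonicity inequality in terms of $A^{-1}$ gives $\thalb$-cocoercivity of $A^{-1}$, and by the bridging observation $N:=\Id-A^{-1}$ is nonexpansive, so $A^{-1}=\Id-N$ is a displacement mapping.

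For ``$\Leftarrow$,'' writing $A^{-1}=\Id-N$ with $N:\HH\to\HH$ nonexpansive gives at once that $A^{-1}$ is single-valued with full domain and $\Id-A^{-1}=N$ is nonexpansive; the bridging observation then yields $\thalb\|A^{-1}u-A^{-1}v\|^2\leq\scal{u-v}{A^{-1}u-A^{-1}v}$, and translating back through $(x,u)\in\gr A \Leftrightarrow (u,x)\in\gr A^{-1}$ gives $\thalb\|x-y\|^2\leq\scal{u-v}{x-y}$ for all $(x,u),(y,v)\in\gr A$. The main technical point—and the only non-trivial step—is verifying the surjectivity of $A$ in the forward direction, which I handle via the resolvent of the maximally monotone shift $B=A-\thalb\Id$ as above.
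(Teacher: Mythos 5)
Your proof is correct, but it takes a genuinely different route from the paper's. The paper runs a single chain of equivalences in resolvent calculus: $A$ is $\thalb$-strongly monotone iff $A=\thalb\Id+M_1$ with $M_1$ maximally monotone, iff $A=(\Id+M_2)\circ(\thalb\Id)$ with $M_2=M_1\circ(2\Id)$ maximally monotone, so that $A^{-1}=2J_{M_2}=2\bigl(\Id-J_{M_2^{-1}}\bigr)$ by the resolvent identity~\eqref{must1}; Fact~\ref{f:Minty} then allows one to replace $J_{M_2^{-1}}$ by an arbitrary firmly nonexpansive mapping $F$, and Fact~\ref{f:firm} rewrites $A^{-1}=2(\Id-F)$ as $\Id-N$ with $N=2F-\Id$ nonexpansive. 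Because every step there is an equivalence, and because resolvents of maximally monotone operators are automatically single-valued with full domain, the paper never has to address single-valuedness or surjectivity of $A^{-1}$ separately. You instead work at the level of pointwise inequalities: your bridging observation (which is Fact~\ref{f:firm} in disguise, applied to $\thalb T$) identifies the displacement property of $A^{-1}$ with $\thalb$-cocoercivity, and the price is the explicit verification that $A^{-1}$ is single-valued (Cauchy--Schwarz) and has full domain; that verification is sound, since $B=A-\thalb\Id$ is monotone with $\ran(\Id+B)=\ran\bigl(\thalb(\Id+2A)\bigr)=\HH$ by Fact~\ref{MintyThm} applied to $2A$, hence $B$ is maximally monotone and $\Id+2B=2A$ is surjective. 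Your route buys elementarity and self-containment---only Minty's theorem plus an expansion of the square---and it makes transparent that the proposition really says ``strong monotonicity of $A$ $\Leftrightarrow$ cocoercivity of $A^{-1}$ $\Leftrightarrow$ nonexpansiveness of $\Id-A^{-1}$''; the paper's route is shorter given the machinery it has already built and exhibits the nonexpansive part explicitly as $N=2J_{M_2^{-1}}-\Id$.
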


\begin{proof}
The assertion that the mapping $A$ is $\frac{1}{2}$-strongly monotone is equivalent to $A=\frac{1}{2}\Id+M_1$ for a maximally monotone mapping $M_1$, which, in turn, is equivalent to $A=\frac{1}{2}\Id+M_2\circ(\frac{1}{2}\Id)$ for a maximally monotone mapping $M_2$ (let $M_2=M_1\circ(2\Id$)). By employing equation~\eqref{mu resolvent identity 2}, this is equivalent to $A^{-1}=2[\Id-(M^{-1}_2+\Id)^{-1}]$. Finally, this is equivalent to $A^{-1}=2(\Id-F)$ for a firmly nonexpansive mapping $F$ (let $F=J_{M_2^{-1}}$), which, in turn, is equivalent to $A^{-1}=2(\Id-(\Id+N)/2)=\Id-N$ for a nonexpansive mapping $N$ (let $N=2F-\Id$). 
\end{proof} 

\begin{corollary}\label{monotone and nonexpansive char}
The maximally monotone mapping $N:\HH\to\HH$ is nonexpansive if and only if $N=2J_B-\Id$ for a maximally monotone and nonexpansive mapping $B$.   
\end{corollary}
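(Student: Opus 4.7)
The statement is an ``if and only if'' characterization that naturally pairs with Proposition~\ref{displacement characterization}, but I would prove it directly via the correspondence $T = (\Id + N)/2 = J_B$, which converts the hypothesis on $N$ into a hypothesis on a firmly nonexpansive mapping $T$, from which Fact~\ref{f:Minty}\ref{f:Mintyi} manufactures $B$.

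$(\Rightarrow)$: Suppose $N$ is maximally monotone and nonexpansive and set $T := (\Id+N)/2$. Then $2T-\Id=N$ is nonexpansive, so Fact~\ref{f:firm}\ref{f:firm-nonexp} implies $T$ is firmly nonexpansive, and Fact~\ref{f:Minty}\ref{f:Mintyi} yields a maximally monotone $B := T^{-1}-\Id$ with $J_B = T$, giving $N = 2J_B-\Id$. It remains to show $B$ is nonexpansive. Note first that $T$ is bijective: injectivity follows from monotonicity of $N$ ($Tx=Ty$ forces $x-y=-(Nx-Ny)$ and then $\|x-y\|^2 = -\langle x-y,Nx-Ny\rangle \le 0$), while Minty's Theorem and the maximal monotonicity of $N$ give $\ran T = \HH$. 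Given $u,v \in \HH$, put $x := T^{-1}u$ and $y := T^{-1}v$, so $u=(x+Nx)/2$, $v=(y+Ny)/2$, and therefore $Bu = T^{-1}u - u = (x-Nx)/2$ and $Bv = (y-Ny)/2$. A direct expansion then yields the key identity
\[
\|u-v\|^2 - \|Bu-Bv\|^2 \;=\; \langle x-y, Nx-Ny\rangle \;\ge\; 0,
\]
where the inequality is monotonicity of $N$. Hence $B$ is nonexpansive.

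$(\Leftarrow)$: Suppose $N = 2J_B-\Id$ with $B$ maximally monotone and nonexpansive. Since $B$ is maximally monotone, Fact~\ref{f:Minty} ensures $J_B$ is firmly nonexpansive, and then Fact~\ref{f:firm}\ref{f:firm-nonexp} gives $N$ nonexpansive. For monotonicity I would run the preceding calculation in reverse: given $x,y\in\HH$, set $u := J_B x$ and $v := J_B y$, so $x = u+Bu$, $y = v+Bv$, $Nx = u-Bu$, $Ny = v-Bv$, and the same algebra produces
\[
\langle Nx-Ny, x-y\rangle \;=\; \|u-v\|^2 - \|Bu-Bv\|^2 \;\ge\; 0,
\]
this time using the nonexpansiveness of $B$. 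Since $N$ is single-valued with full domain and continuous (being nonexpansive), monotonicity upgrades to maximal monotonicity.

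The main obstacle is simply recognizing the symmetric identity $\|u-v\|^2 - \|Bu-Bv\|^2 = \langle x-y, Nx-Ny\rangle$ that makes both implications fall out of a single computation: each implication's ``monotonicity'' conclusion is powered by the other side's ``nonexpansiveness'' hypothesis. Everything else is routine bookkeeping via Fact~\ref{f:firm} and Fact~\ref{f:Minty}.
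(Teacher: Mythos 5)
Your proof is correct, and it takes a genuinely different route from the paper's. The paper obtains this corollary in two lines as a consequence of Proposition~\ref{displacement characterization}: since $\tfrac{1}{2}(N+\Id)$ is maximally monotone and $\tfrac{1}{2}$-strongly monotone, that proposition says its inverse $B+\Id$ is a displacement mapping $\Id-N'$ with $N'$ nonexpansive, whence $B=-N'$ is nonexpansive; the remaining bookkeeping (that $B$ is maximally monotone precisely when $N$ is nonexpansive, via the Minty correspondence, and the reverse implication) is left implicit. You bypass Proposition~\ref{displacement characterization} entirely and instead run everything off the polarization-type identity $\|u-v\|^2-\|Bu-Bv\|^2=\scal{x-y}{Nx-Ny}$ under the substitution $u=Tx$, $T=\tfrac{1}{2}(\Id+N)=J_B$, which cleanly exhibits the symmetric mechanism: monotonicity of $N$ gives nonexpansiveness of $B$ and, reversed, nonexpansiveness of $B$ gives monotonicity of $N$, while nonexpansiveness of $N$ corresponds to firm nonexpansiveness of $T$ and hence (Fact~\ref{f:Minty}\ref{f:Mintyi}) to maximal monotonicity of $B$. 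What each approach buys: the paper's proof has structural economy, reusing the displacement-mapping result it has just established (which in turn rests on the resolvent identity \eqref{mu resolvent identity 2}); yours is self-contained and arguably more careful, since it makes explicit the bijectivity of $T$ (injectivity from monotonicity of $N$, surjectivity from Minty's theorem) and the maximal-monotonicity transfer, both of which the paper's terse argument glosses over. Note only that the backward direction's monotonicity argument is strictly superfluous under the corollary's hypothesis (which already posits $N$ maximally monotone), though including it shows the representation $2J_B-\Id$ by itself forces maximal monotonicity of $N$, a slightly stronger statement.
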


\begin{proof}
Since $\frac{1}{2}(N+\Id)$ is $\frac{1}{2}$-strongly monotone, the assertion $(B+\Id)^{-1}=J_B=\frac{1}{2}(N+\Id)$ is equivalent to $B+\Id$ being a displacement mapping, that is $\Id+B=\Id-N'$ for a nonexpansive mapping $N'$, that is, $B$ is a nonexpansive mapping.
\end{proof}

\begin{corollary}\label{displacement recessiveness}
\textbf{\emph{(being a displacement mapping is recessive)}}
Suppose that for each $i\in I$, $A_i:\HH\rightrightarrows\HH$ is a displacement mapping. Then $\averageonelambda$ is a displacement mapping.
\end{corollary}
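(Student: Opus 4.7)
The plan is to reduce the statement to the two results that immediately precede it, namely the characterization of displacement mappings via strong monotonicity of the inverse (Proposition~\ref{displacement characterization}) and the dominance of cocoercivity (Corollary~\ref{cocoecivity dominance}).

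First I would observe that each $A_{i}$ being a displacement mapping implies, by Proposition~\ref{displacement characterization}, that each $A_{i}^{-1}$ is $\tfrac{1}{2}$-strongly monotone; in particular, each $A_{i}^{-1}$ is $\tfrac{1}{2}$-monotone in the sense of Corollary~\ref{cocoecivity dominance}. Since displacement mappings are maximally monotone (as recalled in the definition), Theorem~\ref{neededlater} ensures that $\averageonelambda$ is maximally monotone, so both Corollary~\ref{cocoecivity dominance} and Proposition~\ref{displacement characterization} are applicable to $\averageonelambda$.

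Next I would apply Corollary~\ref{cocoecivity dominance} with $\mu=1$ and $\epsilon_{i}=\tfrac{1}{2}$ for every $i\in I$, concluding that $(\averageonelambda)^{-1}$ is $\epsilon$-monotone with $\epsilon=r(\boldsymbol{\epsilon},\boldsymbol{\lambda})$. The only arithmetic step is the direct computation
\begin{equation*}
r\bigl((\tfrac{1}{2},\dots,\tfrac{1}{2}),\boldsymbol{\lambda}\bigr)=\Bigl[\sum_{i\in I}\lambda_{i}(\tfrac{1}{2}+1)^{-1}\Bigr]^{-1}-1=\tfrac{3}{2}-1=\tfrac{1}{2},
\end{equation*}
using $\sum_{i\in I}\lambda_{i}=1$. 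Hence $(\averageonelambda)^{-1}$ is $\tfrac{1}{2}$-strongly monotone.

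Finally, I would invoke Proposition~\ref{displacement characterization} in the other direction: since $(\averageonelambda)^{-1}$ is maximally monotone and $\tfrac{1}{2}$-strongly monotone, its inverse, which is $\averageonelambda$ itself, is a displacement mapping, completing the proof. There is no real obstacle here; the argument is essentially bookkeeping, with the only point to verify being that the uniform choice $\epsilon_{i}\equiv\tfrac{1}{2}$ is preserved by $r(\cdot,\boldsymbol{\lambda})$, which is immediate.
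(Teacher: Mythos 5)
Your proof is correct and follows essentially the same route as the paper: both arguments reduce the statement to Proposition~\ref{displacement characterization} combined with the dominance of strong monotonicity of the inverses, applied in one direction to the $A_i$ and in the reverse direction to the average. The only cosmetic difference is that you cite Corollary~\ref{cocoecivity dominance} where the paper invokes Theorem~\ref{strong monotonicity dominance} together with the inversion formula $(\averageonelambda)^{-1}=\averageoneinverse$ directly (which is exactly what that corollary packages), and your explicit check that $r\bigl((\tfrac{1}{2},\dots,\tfrac{1}{2}),\boldsymbol{\lambda}\bigr)=\tfrac{1}{2}$ spells out a computation the paper leaves implicit.
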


\begin{proof}
By employing Proposition~\ref{displacement characterization} we see that for each $i\in I$, $A_i^{-1}$ is $(1/2)$-strongly monotone. Consequently, Theorem~\ref{strong monotonicity dominance} guarantees that $(\averageonelambda)^{-1}=\averageoneinverse$ is $(1/2)$-strongly monotone, which, in turn, implies that $\averageonelambda$ is a displacement mapping.
\end{proof}

\subsection{Nonexpansive monotone operators}
In Example~\ref{displacement not dominant}, we let $\alpha=1$ and $\beta=5$. Then $A_1=\Id$ is nonexpansive, $A_2=5\Id$ is not nonexpansive and $\averageonelambda=2\Id$ which is not nonexpansive. Thus, we see that nonexpansiveness is not a dominant property. 

\begin{theorem}\label{nonexpansive average}
\textbf{\emph{(nonexpansiveness is recessive)}}
Suppose that for each $i\in I$, $A_i:\HH\to\HH$ is a nonexpansive and monotone mapping. Then $\averageonelambda$ is nonexpansive. Furthermore, for each $i\in I$, $A_i=2J_{B_i}-\Id$ where $B_i$ is maximally monotone, nonexpansive and $\averageonelambda=2J_B-\Id$ where $B$ is the maximally monotone and nonexpansive mapping given by $B=\sum_{i\in I}\lambda_iB_i$. 
\end{theorem}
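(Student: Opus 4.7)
The plan is to reduce the theorem to Corollary~\ref{monotone and nonexpansive char} via a direct resolvent computation.

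First, I would observe that each $A_i$, being monotone, single-valued, and continuous (in fact, nonexpansive) on all of $\HH$, is maximally monotone. Corollary~\ref{monotone and nonexpansive char} therefore provides, for each $i\in I$, a maximally monotone and nonexpansive mapping $B_i:\HH\to\HH$ such that $A_i=2J_{B_i}-\Id$. I would then set $B=\sum_{i\in I}\lambda_i B_i$; since convex combinations preserve both monotonicity and nonexpansiveness (the latter by the triangle inequality) and each $B_i$ has full domain, $B$ is monotone, nonexpansive, and defined on all of $\HH$, and hence maximally monotone.

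The main computation identifies $J_{A_i}$ in terms of $B_i$. From $A_i+\Id=2J_{B_i}$ one obtains $J_{A_i}=(2J_{B_i})^{-1}$, and because $B_i$ is single-valued with full domain, $J_{B_i}:\HH\to\HH$ is a bijection with inverse $\Id+B_i$. A short calculation then yields $J_{A_i}(y)=\tfrac{1}{2}y+B_i(\tfrac{1}{2}y)$ for every $y\in\HH$. Averaging via~\eqref{resolventidentity} gives
\begin{equation*}
J_{\averageonelambda}(y)=\sum_{i\in I}\lambda_i J_{A_i}(y)=\tfrac{1}{2}y+B(\tfrac{1}{2}y).
\end{equation*}
Applying the very same identification to the maximally monotone mapping $B$ in place of the $B_i$'s shows that the right-hand side is precisely $(2J_B)^{-1}(y)$. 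Thus $J_{\averageonelambda}=(2J_B)^{-1}$, or equivalently $\averageonelambda+\Id=2J_B$, i.e., $\averageonelambda=2J_B-\Id$.

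Since $J_B$ is firmly nonexpansive by Fact~\ref{f:Minty}, Fact~\ref{f:firm} gives that $\averageonelambda=2J_B-\Id$ is nonexpansive, completing the proof. The main piece of bookkeeping is the identification $J_{A_i}=(2J_{B_i})^{-1}$ as an equality of single-valued maps on all of $\HH$, which requires that each $B_i$ (and hence $B$) be single-valued with full domain -- both delivered by Corollary~\ref{monotone and nonexpansive char} -- after which the convex combination structure transfers cleanly from the $B_i$'s to $B$ and the nonexpansiveness of $\averageonelambda$ drops out.
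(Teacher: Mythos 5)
Your proposal is correct and follows essentially the same route as the paper's own proof: both invoke Corollary~\ref{monotone and nonexpansive char} to write $A_i=2J_{B_i}-\Id$, compute $J_{A_i}=(2J_{B_i})^{-1}=(B_i+\Id)\circ(\tfrac{1}{2}\Id)$, average to recognize $J_{\averageonelambda}=(B+\Id)\circ(\tfrac{1}{2}\Id)=(2J_B)^{-1}$, and conclude $\averageonelambda=2J_B-\Id$. Your explicit final step (nonexpansiveness of $2J_B-\Id$ via Fact~\ref{f:firm}) is only a minor elaboration of what the paper leaves implicit.
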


\begin{proof}
Since the $A_i$'s have full domain and are continuous, they are maximally monotone. Employing Corollary~\ref{monotone and nonexpansive char}, we see that the $B_i$'s are maximally monotone and nonexpansive (and have full domain). Furthermore, we have
\begin{align*}
J_{\averageonelambda}&=\sum_{i\in I}\lambda_iJ_{A_i}=\sum_{i\in
I}\lambda_i(2J_{B_i})^{-1}=\sum_{i\in
I}\lambda_i(J_{B_i})^{-1}\circ(\tfrac{1}{2}\Id)=\sum_{i\in
I}\lambda_i(B_i+\Id)\circ(\tfrac{1}{2}\Id)\\
&=(B+\Id)\circ(\tfrac{1}{2}\Id)=(2J_B)^{-1}.
\end{align*}
Thus, we see that $\averageonelambda=2J_B-\Id$, as asserted.
\end{proof}

\section{Miscellaneous observations and remarks}\label{neither}

In our last section, we consider combinations of properties, indeterminate properties and other observations and remarks.

\subsection{Paramonotonicity combined with single-valudeness}
\begin{theorem}\label{singlevalued paramonotone}
\textbf{\emph{(paramonotonicity combined with single-valudeness is dominant)}} Suppose that for each $i\in I$, $A_i:\HH\rightrightarrows\HH$ is maximally monotone. If for some $i_0\in I$, $A_{i_0}$ is paramonotone and at most single-valued, then so is $\average$.
\end{theorem}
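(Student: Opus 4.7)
The plan is to combine the two characterizations of paramonotonicity and at-most-single-valuedness via resolvents with Proposition~\ref{convex combination of resolvents of paramonotone}\ref{resolvent of paramonotone pluse injective}, which is already tailored for exactly this situation: it delivers property~\eqref{resolvent of paramonotone} for a convex combination of firmly nonexpansive mappings under the assumption that a \emph{single} summand both satisfies~\eqref{resolvent of paramonotone} and is injective. Note that Theorem~\ref{paramonotonicity is recessive} alone is not enough here, since we do not assume that every $A_i$ is paramonotone; we only know this for $A_{i_0}$, together with single-valuedness of $A_{i_0}$.

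First I would reduce to the case $\mu=1$ exactly as in the proof of Theorem~\ref{paramonotonicity is recessive}: a mapping $A$ is paramonotone if and only if $\mu A$ is paramonotone, and it is at most single-valued if and only if $\mu A$ is at most single-valued, so by the scaling formula~\eqref{scaled average} it suffices to establish both conclusions for $\averageonelambda$.

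Assume $\mu=1$. Since $A_{i_0}$ is maximally monotone and paramonotone, the characterization used in the proof of Theorem~\ref{paramonotonicity is recessive} (i.e.\ \cite[Theorem 2.1(xv)]{bmw12}) gives that $J_{A_{i_0}}$ has property~\eqref{resolvent of paramonotone}. Since $A_{i_0}$ is at most single-valued, the characterization used in the proof of Theorem~\ref{single valued} (\cite[Theorem 2.1(iv)]{bmw12}) gives that $J_{A_{i_0}}$ is injective. For every $i\in I$, the resolvent $J_{A_i}$ is firmly nonexpansive (Fact~\ref{f:Minty}). Applying Proposition~\ref{convex combination of resolvents of paramonotone}\ref{resolvent of paramonotone pluse injective} to the convex combination $J_{\averageonelambda}=\sum_{i\in I}\lambda_i J_{A_i}$, we conclude that $J_{\averageonelambda}$ has property~\eqref{resolvent of paramonotone} \emph{and} is injective.

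Finally, reading the two characterizations in reverse: injectivity of $J_{\averageonelambda}$ yields that $\averageonelambda$ is at most single-valued, while property~\eqref{resolvent of paramonotone} of $J_{\averageonelambda}$ yields that $\averageonelambda$ is paramonotone. Passing back from $\mu=1$ to general $\mu>0$ through~\eqref{scaled average} completes the proof. There is no serious obstacle here: the work has already been done in Proposition~\ref{convex combination of resolvents of paramonotone}\ref{resolvent of paramonotone pluse injective}; the only thing to notice is that having one summand that is \emph{simultaneously} paramonotone and single-valued (i.e.\ whose resolvent is simultaneously a ``paramonotone-type'' firmly nonexpansive mapping and injective) is precisely the hypothesis tailored to that proposition, which is why Theorem~\ref{single valued} and Theorem~\ref{paramonotonicity is recessive} can be upgraded jointly even though paramonotonicity by itself is only recessive.
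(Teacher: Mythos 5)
Your proposal is correct and follows essentially the same route as the paper: both proofs invoke the resolvent characterizations of at-most-single-valuedness and paramonotonicity from \cite{bmw12} and then apply Proposition~\ref{convex combination of resolvents of paramonotone}\ref{resolvent of paramonotone pluse injective} to the convex combination of resolvents. The only cosmetic difference is that the paper applies the characterizations directly to $J_{\mu A_{i_0}}$ for general $\mu>0$, whereas you first reduce to $\mu=1$ and scale back via~\eqref{scaled average}; both handle the parameter correctly.
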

 
\begin{proof}
We recall that the maximally monotone mapping $A:\HH\rightrightarrows\HH$ is at most single-valued if and only if $J_A$ is injective (see \cite[Theorem 2.1(iv)]{bmw12}). We also recall that $A$ is paramonotone if and only if $J_A$ has property~\eqref{resolvent of paramonotone}. Thus, we see that $J_{\mu A_{i_0}}$ is injective and has property~\eqref{resolvent of paramonotone}. Consequently, Proposition~\ref{convex combination of resolvents of paramonotone}\ref{resolvent of paramonotone pluse injective} implies that $J_{\mu\average}=\sum_{i\in I}\lambda_iJ_{\mu A_i}$ is injective and has property~\eqref{resolvent of paramonotone}, which, in turn, implies that $\average$ is paramonotone and at most single-valued.
\end{proof}

\subsection{Linear relations, bounded linear operators and linear operators on $\RR^n$}
Within the class of maximally monotone and linear relation on $\HH$, which we will denote by $MLR(\HH)$, lies the class of classical (single-valued) monotone and bounded linear operators which we will denote by $BML(\HH)$. Within $BML(\HH)$ we will denote by $BMLI(\HH)$ the class of invertible operators, that is, bounded linear monotone surjective operators with a bounded inverse. We begin our discussion of these classes of operators with the following result:

\begin{theorem}\label{classical linearity dominance within}
\textbf{\emph{(in $MLR(\HH)$, being $BML(\HH)$ and being $BMLI(\HH)$ are dominant)}}
Suppose that for each $i\in I$, $A\in MLR(\HH)$ and there exists $i_0\in I$ such that $A_{i_0}\in BML(\HH)$. Then $\average\in BML(\HH)$. Furthermore: 
\begin{enumerate}
\item If $A_{i_0}\in BMLI(\HH)$, then $\average\in BMLI(\HH)$;
\item If $A_{i_0}$ is paramonotone, then $\average$ is paramonotone.
\end{enumerate}
\end{theorem}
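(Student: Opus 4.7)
The plan is to assemble the conclusion from dominance results already established, with the only nontrivial step being an application of the closed graph theorem. First, Corollary~\ref{linearrelation} shows that $\average$ is a maximally monotone linear relation. Next, since $A_{i_0}\in BML(\HH)$ is single-valued with $\dom A_{i_0}=\HH$, Theorem~\ref{single valued} gives that $\average$ is at most single-valued and Theorem~\ref{t:FDdom}\ref{t:Fdom} yields $\dom\average=\HH$. Thus $\average$ is an everywhere-defined, single-valued linear operator on $\HH$; its graph is closed because every maximally monotone operator has closed graph. The closed graph theorem therefore yields boundedness, i.e., $\average\in BML(\HH)$.

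For (i), if $A_{i_0}\in BMLI(\HH)$, then $A_{i_0}^{-1}\in BML(\HH)$ as well. By Fact~\ref{linear} each $A_i^{-1}$ is a linear relation, and it is maximally monotone since $A_i$ is. Applying the main assertion just proved to the tuple ${\bf A}^{-1}$ (with the same distinguished index $i_0$) gives $\averageinverse\in BML(\HH)$; the inversion formula of Theorem~\ref{mainresult} then yields $(\average)^{-1}=\averageinverse\in BML(\HH)$, so $\average\in BMLI(\HH)$. For (ii), $A_{i_0}$ lies in $BML(\HH)$ so it is at most single-valued, and it is paramonotone by hypothesis; Theorem~\ref{singlevalued paramonotone} applied with the index $i_0$ then delivers paramonotonicity of $\average$.

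The main obstacle is the boundedness step: single-valuedness, fullness of domain, and the linear-relation structure each follow directly from earlier dominance results, but to land inside $BML(\HH)$ rather than merely in the class of everywhere-defined single-valued linear operators one must observe that the maximal monotonicity of the linear-relation average forces its graph to be closed, which then licenses an appeal to the closed graph theorem.
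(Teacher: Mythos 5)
Your proposal is correct, and its overall decomposition coincides with the paper's: Corollary~\ref{linearrelation} for the linear-relation structure, Theorem~\ref{t:FDdom}\ref{t:Fdom} for fullness of domain, Theorem~\ref{single valued} for single-valuedness, an application of the main assertion to the tuple of inverses combined with Theorem~\ref{mainresult} for part (i), and Theorem~\ref{singlevalued paramonotone} for part (ii). The one genuine divergence is the boundedness step, which you rightly identify as the crux. The paper invokes Rockafellar's local boundedness theorem \cite[Theorem 1]{Rock bndd}: a monotone operator is locally bounded on the interior of its domain, so $\average$, having full domain, is everywhere locally bounded, and for a single-valued everywhere-defined linear operator this is equivalent to boundedness. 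You instead observe that maximal monotonicity forces the graph of $\average$ to be norm-closed (if $\gra\average\ni(x_k,u_k)\to(x,u)$, then $\scal{x-y}{u-v}\geq 0$ for every $(y,v)\in\gra\average$, whence $(x,u)\in\gra\average$ by maximality) and then apply the closed graph theorem to the everywhere-defined single-valued linear operator $\average$. Both arguments are sound. The paper's route stays within monotone operator theory and uses linearity only at the very last moment, in converting local boundedness into boundedness; your route trades that for classical Banach-space machinery, exploiting linearity and the completeness of $\HH$ through the closed graph theorem, with closedness of the graph supplied by maximality. Your treatment of part (i) is the same in substance as the paper's terse one: the paper's claim that boundedness of $A_{i_0}^{-1}$ implies boundedness of $\averageinverse=(\average)^{-1}$ is exactly your explicit reapplication of the main assertion to ${\bf A}^{-1}$ (with parameter $\mu^{-1}$), whose hypotheses you verify via Fact~\ref{linear} and the fact that inverses of maximally monotone operators are maximally monotone.
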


\begin{proof}
Corollary~\ref{linearrelation} guarantees that $\average$ is a monotone linear relation. Since $A_{i_0}$ has a full domain, then Theorem~\ref{t:FDdom}\ref{t:Fdom} guarantees that $\average$ has full domain.  Since $A_{i_0}$ is single-valued, then Theorem~\ref{single valued} guarantees that $\average$ is single-valued. Finally, since a monotone operator is locally bounded on the interior of its domain (see  \cite[Theorem 1]{Rock bndd}) and since $\average$ has full domain, then it is everywhere locally bounded, which, in the case of single-valued linear operators, is equivalent to boundedness.  If $A_{i_0}$ is a bounded linear operator which is also invertible, then $A_{i_0}^{-1}$ is bounded, which implies that $\averageinverse=(\average)^{-1}$ is bounded. Finally, if $A_{i_0}$ is paramonotone, then Theorem~\ref{singlevalued paramonotone} guarantees that $\average$ is paramonotone. 
\end{proof}

\begin{remark}
Within the settings and hypothesis of Theorem~\ref{classical linearity dominance within}, we note that when $\HH=\RR^n$, paramonotonicity is interchangeable with rectangularity since, in this case, the two properties are equivalent (see Fact~\ref{f:pararecsame}). 
\end{remark}

We now discuss the class of maximally monotone linear relations on $\RR^n$, which we denote by $MLR(n)$, its subclass of monotone linear mappings (classical, single-valued), which we denote by $ML(n)$, and its subclass of monotone, linear and (classically) invertible operators which we denote by $MLI(n)$. We will identify operators in $ML(n)$ with their standard matrix representation. A subclass of $ML(n)$ is the class of positive semidefinite matrices and a subclass of  $MLI(n)$ is the class of positive definite matrices. The resolvent average of positive semidefinite and definite matrices was studied in \cite{bmw-res}. We now recall that any positive semidefinite matrix $A$ has the property that $0\leq\det A$. This fact holds in the larger class $ML(n)$ as well:
\begin{proposition}\label{det}
Suppose that $A\in ML(n)$. Then for every $0\leq\lambda<1$, $0<\det(\lambda A+(1-\lambda)\Id)$. Consequently, $0\leq\det A$.
\end{proposition}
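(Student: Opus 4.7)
The plan is to reduce the determinant inequality to an injectivity (nonsingularity) statement and then use a continuity/connectedness argument on $[0,1)$ to upgrade nonvanishing to positivity.

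First, I would fix $\lambda\in[0,1)$ and set $M_\lambda:=\lambda A+(1-\lambda)\Id$. The key step is to show that $M_\lambda$ is nonsingular. Suppose $M_\lambda x=0$ for some $x\in\RR^n$. Then $\lambda\langle Ax,x\rangle=-(1-\lambda)\|x\|^2$. Since $A\in ML(n)$ is monotone, $\langle Ax,x\rangle\geq 0$, so the left-hand side is nonnegative while the right-hand side is nonpositive. Hence both equal zero, and because $\lambda<1$ this forces $\|x\|^2=0$, i.e., $x=0$. Therefore $M_\lambda$ is injective on $\RR^n$, hence invertible, so $\det M_\lambda\neq 0$.

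Next I would pass from nonvanishing to positivity. The function $\varphi:[0,1)\to\RR$, $\varphi(\lambda):=\det M_\lambda$, is a polynomial in $\lambda$, in particular continuous, and satisfies $\varphi(0)=\det\Id=1>0$. Combined with $\varphi(\lambda)\neq 0$ for all $\lambda\in[0,1)$, the intermediate value theorem (applied on the connected interval $[0,1)$) forces $\varphi(\lambda)>0$ throughout $[0,1)$, which is the first conclusion.

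Finally, to deduce $\det A\geq 0$, I would let $\lambda\to 1^-$. By continuity of the determinant,
\begin{equation*}
\det A=\lim_{\lambda\to 1^-}\det\big(\lambda A+(1-\lambda)\Id\big)\geq 0,
\end{equation*}
since each term in the limit is strictly positive. I do not anticipate a real obstacle here: the entire argument hinges on the single observation in the first paragraph that monotonicity plus $\lambda<1$ rules out nontrivial kernel elements of $M_\lambda$; everything else is standard continuity. Note that the stronger bound $\det M_\lambda>0$ genuinely requires $\lambda<1$, since at $\lambda=1$ we recover $A$ itself, which can be singular (for example, any $A\in ML(n)\setminus MLI(n)$).
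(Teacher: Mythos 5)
Your proof is correct and follows essentially the same route as the paper: both arguments use monotonicity together with $1-\lambda>0$ to show that $\lambda A+(1-\lambda)\Id$ is injective (the paper proves $\scal{x}{(\lambda A+(1-\lambda)\Id)x}>0$ directly for $x\neq 0$, you argue by contradiction from a kernel element), and then both invoke continuity of $\lambda\mapsto\det(\lambda A+(1-\lambda)\Id)$ with value $1$ at $\lambda=0$ to get positivity on $[0,1)$ and the limit $\det A\geq 0$. No gap to report.
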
 

\begin{proof}
Let $x\in\RR^n,\ x\neq 0$. Since $0\leq\scal{x}{Ax}$, then 
$$
0<\lambda\scal{x}{Ax}+(1-\lambda)\|x\|^2=\scal{x}{(\lambda A+(1-\lambda)\Id)x}.
$$
We conclude that $0\neq(\lambda A+(1-\lambda)\Id)x$ for every $x\in\RR^n,\ x\neq0 $. We now define $\phi:[0,1]\to\RR$ by $\phi(\lambda)=\det(\lambda A+(1-\lambda)\Id)$. It follows that $\phi$ is continuous, $\phi$ does not vanish on $[0,1[$ and $\phi(0)=\det\Id=1$. Consequently, $0<\phi(\lambda)$ for every $0\leq\lambda<1$, which, in turn, implies that $\det A=\phi(1)=\lim_{\lambda\to1^-}\phi(\lambda)\geq 0$. 
\end{proof}

We now focus our attention on $ML(n)$. To this end, we first recall the case where a mapping $A\in MLR(n)$ is, in fact, in $ML(n)$. Such a characterization is the combination of \cite[Fact 2.2]{bwy2012} and \cite[Fact 2.3]{bwy2012}:

\begin{fact}
For $A\in MLR(n)$ the following assertions are equivalent:
\begin{enumerate}
\item $A\in ML(n)$;
\item $A$ is at most single-valued;
\item $A(0)$ is a singleton;
\item $\dom A=\RR^n$.
\end{enumerate}
\end{fact}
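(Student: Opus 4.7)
The plan is to prove the circular chain (i) $\Rightarrow$ (ii) $\Rightarrow$ (iii) $\Rightarrow$ (iv) $\Rightarrow$ (i). The first implication is immediate because $ML(n)$ is, by definition, the subclass of single-valued operators within $MLR(n)$. For (ii) $\Rightarrow$ (iii), since $\gra A$ is a linear subspace of $\RR^n \times \RR^n$, we have $(0,0) \in \gra A$, so $0 \in A(0)$; being at most single-valued at $0$ then forces $A(0) = \{0\}$.

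The crux of the argument is the identity $A(0) = (\dom A)^\perp$, valid for any maximally monotone linear relation $A$. I would establish it directly. For the inclusion $A(0) \subseteq (\dom A)^\perp$: given $v \in A(0)$ and $(x, u) \in \gra A$, linearity yields $(0, tv) \in \gra A$ for every $t \in \RR$, and monotonicity gives $\scal{x}{u} - t\scal{x}{v} \geq 0$ for all $t \in \RR$, forcing $\scal{x}{v} = 0$. For the reverse inclusion, given $v \in (\dom A)^\perp$ and any $(x, u) \in \gra A$, we have $\scal{x-0}{u-v} = \scal{x}{u} - \scal{x}{v} = \scal{x}{u} \geq 0$, where the final inequality uses that $(0,0) \in \gra A$. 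Thus $(0, v)$ is monotonically related to every point of $\gra A$, and maximality forces $(0, v) \in \gra A$, i.e., $v \in A(0)$.

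With this identity in hand, (iii) $\Rightarrow$ (iv) is immediate: $A(0) = \{0\}$ gives $(\dom A)^\perp = \{0\}$, and since $\dom A$ is a linear subspace of the finite-dimensional space $\RR^n$, it must coincide with $\RR^n$. For (iv) $\Rightarrow$ (i), $\dom A = \RR^n$ yields $A(0) = (\dom A)^\perp = \{0\}$, and using that for a linear relation one has $Ax = u + A(0)$ for any $u \in Ax$, we conclude $Ax$ is a singleton for every $x$. Combined with linearity and the inherited monotonicity, this places $A$ in $ML(n)$.

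The principal obstacle is the identity $A(0) = (\dom A)^\perp$; everything else is linear-relation bookkeeping. In particular, the reverse inclusion relies essentially on the maximality hypothesis, which is the only genuinely analytic input in the chain.
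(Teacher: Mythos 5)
Your argument is correct in every step: the two inclusions establishing $A(0)=(\dom A)^\perp$ are sound (and you correctly isolate maximality as the only place it is needed, namely the reverse inclusion), the observation $0\in A(0)$ pins the singleton in (iii) down to $\{0\}$, and the remaining implications are indeed routine bookkeeping with the linear subspace $\gra A$, using that a subspace of $\RR^n$ with trivial orthogonal complement is all of $\RR^n$ and that $Ax=u+A(0)$ for any $u\in Ax$. For comparison: the paper itself gives no proof of this Fact --- it is imported as the combination of Facts 2.2 and 2.3 of \cite{bwy2012} --- and the argument in that reference rests on the same key identity $A(0)=(\dom A)^\perp$, so your reconstruction is essentially the standard route, now made self-contained, rather than a genuinely different approach.
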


In our current discussion it is crucial to distinguish between the notion of the inverse of a mapping in $MLR(n)$ (multivalued settings) and the classical notion of the inverse of a single-valued mapping in $MLI(n)$. In the case where $A\in MLI(n)$, we will abuse the notation and write $A^{-1}$ for its inverse, which can then be viewed as the same in both settings, the multivalued and the classical single-valued. In order for our results in the present paper to be relevant for studies within classical linear algebra settings in $ML(n)$, we now support our claim that:
\begin{quotation}
When taking the resolvent average of operators in ML(n), all of the inversion operations  involved in the averaging operation are classical inversions. Furthermore, for operators in MLI(n), all of the inverses in the formula $(\average)^{-1}=\averageinverse$ are classical inverses.
\end{quotation}
Indeed, the ``Furthermore" part follows from the invertibility of the $A_i$'s when we recall Theorem~\ref{classical linearity dominance within}. Now, if $0<\mu$ and $A_i\in ML(n)$, then $\mu A_i+\Id$ is invertible (since $0<\|x\|^2\leq\scal{x}{(\mu A_i+\Id)x}$ for every $0\neq x\in\RR^n$). Thus, since $A_i$ is monotone, then $J_{\mu A_i}$ is now seen to be invertible and firmly nonexpansive, that is, for any $x\in\RR^n,\ x\neq0$, we have $0<\|J_{\mu A_i}x\|^2\leq\scal{x}{J_{\mu A_i}x}$. As a consequence, 
$$
\scal{x}{J_{\mu\average}x}=\sum_{i\in I}\lambda_i\scal{x}{J_{\mu A_i}x}>0.
$$
Since this is true for every $x\in\RR^n,\ x\neq0$, we conclude that $J_{\mu\average}$ is, indeed, invertible, as asserted.

We now focus our attention further on $ML(n)\cap O(n,\RR)$. Here $O(n,\RR)$ is the group of orthogonal matrices in $\RR^{n\times n}$; in particular, $A\in O(n\RR)\Rightarrow\det A\pm 1$. Since, as we saw in Proposition~\ref{det}, the determinant is greater than zero in $ML(n)$, then, in fact, $ML(n)\cap O(n,\RR)=ML(n)\cap SO(n,\RR)$, where $SO(n)$ is the group of the special orthogonal matrices, that is, the matrices $A\in O(n)$ such that $\det A=1$. Thus, $ML(n)\cap O(n,\RR)=ML(n)\cap SO(n,\RR)$ can be viewed as the subset of rotations which consists of the rotations by an acute or right angles. The next result demonstrates that the resolvent average of such rotations is again such a rotation. This, of course, fails when taking the arithmetic average.

\begin{theorem}\label{rotations}
\textbf{\emph{(being a rotation by an acute or right angle is recessive)}}
Suppose that for each $i\in I$, $A_i\in ML(n)\cap O(n,\RR)=ML(n)\cap SO(n,\RR)$. Then $\averageonelambda \in ML(n)\cap O(n,\RR)=ML(n)\cap SO(n,\RR)$. 
\end{theorem}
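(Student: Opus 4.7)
The plan is to exploit the inversion formula $\bigl(\mathcal{R}(\mathbf{A},\boldsymbol{\lambda})\bigr)^{-1} = \mathcal{R}(\mathbf{A}^{-1},\boldsymbol{\lambda})$ from Theorem~\ref{mainresult}, combined with the defining property $A^{-1} = A^\intercal$ of orthogonal matrices. First, I would use Corollary~\ref{linearrelation} together with Theorem~\ref{classical linearity dominance within} to conclude that $\mathcal{R}(\mathbf{A},\boldsymbol{\lambda}) \in ML(n)$, since each $A_i$ lies in $ML(n) \subseteq BML(\RR^n)$. As emphasized in the paragraph preceding the theorem, all the matrix inverses appearing in $\mathcal{R}(\mathbf{A},\boldsymbol{\lambda})$ are genuine classical inverses.

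Next, I would verify that transposition commutes with the resolvent average, that is,
\begin{equation*}
\mathcal{R}(\mathbf{A},\boldsymbol{\lambda})^\intercal = \mathcal{R}(\mathbf{A}^\intercal,\boldsymbol{\lambda}),
\end{equation*}
where $\mathbf{A}^\intercal = (A_1^\intercal,\ldots,A_n^\intercal)$. This is an immediate consequence of the definition~\eqref{ResAvgDef}, together with the elementary facts that $(\sum_i \lambda_i B_i)^\intercal = \sum_i \lambda_i B_i^\intercal$ and $(B^{-1})^\intercal = (B^\intercal)^{-1}$ for invertible $B$; note that $A_i^\intercal$ remains in $ML(n)$ because orthogonal matrices preserve inner products, so the resolvent average on the right is well defined in the classical sense.

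Now, since every $A_i$ is orthogonal, $A_i^\intercal = A_i^{-1}$, so $\mathbf{A}^\intercal = \mathbf{A}^{-1}$. Combining the two observations with Theorem~\ref{mainresult} yields
\begin{equation*}
\mathcal{R}(\mathbf{A},\boldsymbol{\lambda})^\intercal
\;=\; \mathcal{R}(\mathbf{A}^\intercal,\boldsymbol{\lambda})
\;=\; \mathcal{R}(\mathbf{A}^{-1},\boldsymbol{\lambda})
\;=\; \bigl(\mathcal{R}(\mathbf{A},\boldsymbol{\lambda})\bigr)^{-1},
\end{equation*}
which establishes that $\mathcal{R}(\mathbf{A},\boldsymbol{\lambda}) \in O(n,\RR)$. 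The identification $ML(n)\cap O(n,\RR) = ML(n) \cap SO(n,\RR)$ was already observed in the paragraph preceding the statement as a consequence of Proposition~\ref{det}, so the proof is complete. There is no real obstacle here: the only non-trivial ingredients are Theorem~\ref{mainresult} and the (routine) interchange of transposition with the operations defining the resolvent average.
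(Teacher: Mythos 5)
Your proposal is correct and follows essentially the same route as the paper's proof: the inversion formula of Theorem~\ref{mainresult}, the identity $A_i^{-1}=A_i^\intercal$ for orthogonal matrices, the commutation of transposition with the resolvent average, and Proposition~\ref{det} for $\det\averageonelambda=1$. The only difference is that you spell out the steps the paper leaves implicit (membership in $ML(n)$ via Theorem~\ref{classical linearity dominance within}, and the elementary verification that transposition passes through the sums and classical inversions in~\eqref{ResAvgDef}), which is a harmless elaboration rather than a different argument.
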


\begin{proof}
By employing the inversion formula~\eqref{mainresult} (which, in this case, was seen to be a classical inversion), we obtain
$$
\big(\averageonelambda\big)^{-1} = \averageoneinverse
=\mathcal{R}({\bf A}^{\intercal},\lambda)
=\averageonelambda^{\intercal}.
$$
That is, $\averageonelambda$ is orthogonal. The fact that $\det\averageonelambda=1$ follows from the monotonicity of $\averageonelambda$ (see Proposition~\ref{det}).
\end{proof}

\begin{example}[generating Pythagorean triples]
When $n=2$, we consider the case where $A_1=\Id$, $A_2$ is the counter clockwise rotation by $\frac{\pi}{2}$, $0<\lambda<1$, $\lambda_1=\lambda$ and $\lambda_2=1-\lambda$. Then
$$
\averageonelambda = \frac{1}{\lambda^2-2\lambda+2}\begin{pmatrix}\lambda(2-\lambda)  & -2(1-\lambda)\\2(1-\lambda) & \lambda(2-\lambda)\end{pmatrix}
$$
is a counter-clockwise rotation matrix by an angle of a right triangle with sides 
$$
a(\lambda)=\lambda(2-\lambda),\ \ b(\lambda)=2(1-\lambda)\ \ \ \ \text{and}\ \ \ \ c(\lambda)=\sqrt{a^2+b^2}=\lambda^2-2\lambda+2.
$$
Now $\lambda\mapsto\averageonelambda$ is a smooth and one-to-one curve, in particular, $\lambda\mapsto a(\lambda)/c(\lambda)$ is a bijection of $[0,1]$ to itself. (In fact, for any two monotone mappings $A_1:\HH\rightrightarrows\HH$ and $A_2:\HH\rightrightarrows\HH$ such that $A_1\neq A_2$, $\lambda\mapsto\averageonelambda$ is one-to-one). Consequently, up to rescaling, all possible right triangles can be recovered in this manner. In particular, considering all of the possible rational values of $\lambda$, our procedure recovers all possible \emph{primitive Pythagorean triples} $(a,b,c)$ (other values of $\lambda$ should be considered as well in order to recover all possible Pythagorean triples). Indeed, letting $\lambda=\frac{p}{q}$ where $0<p<q$ are natural numbers, we obtain the matrix
$$
\averageonelambda=
\frac{1}{\lambda^2-2\lambda+2}\begin{pmatrix}\lambda(2-\lambda) & -2(1-\lambda)\\2(1-\lambda) & \lambda(2-\lambda)\end{pmatrix}
 =\frac{1}{p^2-2pq+2q^2}\begin{pmatrix}
p(2q-p) & -2q(q-p)\\2q(q-p)& p(2q-p)
\end{pmatrix}
$$  
which is a counter-clockwise rotation matrix by an angle of a right triangle with sides 
$$
a=p(2q-p),\ \ b=2q(q-p)\ \ \ \ \text{and}\ \ \ \ \ c=p^2-2pq+2q^2.
$$
Letting $k=q-p$ and $l=q$, we obtain the formula
\begin{equation}\label{Euclid's formula}
a=l^2-k^2,\ \ b=2kl\ \ \ \ \ \text{and}\ \ \ \ c=k^2+l^2,
\end{equation}
which is a well known formula for generating all of the primitive Pythagorean triples and is  attributed to Euclid. For further, more accurate, relations between formula~\eqref{Euclid's formula} and primitive Pythagorean triples as well as for an extensive historical overview  see~\cite[Section 4.2]{MathExp}.
\end{example}

\subsection{Nonexpansive monotone operators and Banach contractions}
We continue our discussion of nonexpansive monotone operators. Within this class of mappings, being a Banach contraction is a dominant property:
   
\begin{theorem}\label{Banach contraction dominance within}
\textbf{\emph{(within the class of nonexpansive mappings, being a Banach contraction is dominant)}}
Suppose that for each $i\in I$, $A_i:\HH\to\HH$ is nonexpansive and monotone. If there exists $i_0\in I$ such that $A_{i_0}$ is a Banach contraction, then $\averageonelambda$ is a Banach contraction. 
\end{theorem}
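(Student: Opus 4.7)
The plan is to exploit the reflected-resolvent representation supplied by Theorem~\ref{nonexpansive average}. Write each $A_i = 2J_{B_i} - \Id$ with $B_i$ maximally monotone and nonexpansive (hence $\dom B_i = \HH$), and $\averageonelambda = 2J_B - \Id$ where $B = \sum_{i\in I}\lambda_i B_i$. The claim then reduces to showing that the Banach contraction property of $A_{i_0}$ forces a quantitative strong monotonicity of $B_{i_0}$ which survives the arithmetic average and in turn forces $2J_B - \Id$ to be a contraction.

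The engine is a single algebraic identity. For maximally monotone and nonexpansive $B'\colon\HH\to\HH$ and $A' := 2J_{B'} - \Id$, setting $u = J_{B'}x$ and $v = J_{B'}y$, one has $x = u + B'u$, $y = v + B'v$, and $A'x - A'y = (u-v) - (B'u - B'v)$, so direct expansion yields
$$
\|x-y\|^2 - \|A'x - A'y\|^2 = 4\scal{u-v}{B'u - B'v}.
$$
First I would apply this identity to $A_{i_0}$ with Lipschitz constant $L_{i_0} < 1$. Noting that $\|u-v\|^2 \leq \|x-y\|^2$ (expand $\|x-y\|^2$ and discard the nonnegative cross term coming from monotonicity together with $\|B_{i_0}u - B_{i_0}v\|^2 \geq 0$), this gives
$$
\scal{u-v}{B_{i_0}u - B_{i_0}v} \geq \tfrac{1-L_{i_0}^2}{4}\|u-v\|^2.
$$
Since $\ran J_{B_{i_0}} = \dom B_{i_0} = \HH$, the pair $(u,v)$ ranges over all of $\HH\times\HH$, so $B_{i_0}$ is $\epsilon_{i_0}$-strongly monotone with $\epsilon_{i_0} := (1-L_{i_0}^2)/4 > 0$. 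Consequently $B = \sum_i \lambda_i B_i$, being a convex combination of monotone operators one of which is $\epsilon_{i_0}$-strongly monotone, is $\lambda_{i_0}\epsilon_{i_0}$-strongly monotone.

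Finally I would invoke the same identity for $B$ and $A = \averageonelambda$: with $u = J_Bx$, $v = J_By$, strong monotonicity gives $\|x-y\|^2 - \|Ax - Ay\|^2 \geq 4\lambda_{i_0}\epsilon_{i_0}\|u-v\|^2$. Using the cheap bound $\|x-y\| = \|(u-v) + (Bu - Bv)\| \leq \|u-v\| + \|Bu - Bv\| \leq 2\|u-v\|$ (where nonexpansiveness of $B$ — which is automatic from the convex combination — is used), one has $\|u-v\|^2 \geq \|x-y\|^2/4$, whence
$$
\|\averageonelambda x - \averageonelambda y\|^2 \leq \bigl(1 - \lambda_{i_0}(1-L_{i_0}^2)/4\bigr)\|x-y\|^2,
$$
so $\averageonelambda$ is a Banach contraction.

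The principal obstacle is simply recognizing the key identity and tracking the correct direction of each inequality; once those are in place, the argument is a clean three-stage chain ($A_{i_0}$-contraction $\Rightarrow$ $B_{i_0}$-strong monotonicity $\Rightarrow$ $B$-strong monotonicity $\Rightarrow$ $\averageonelambda$-contraction). The strictness at the end is safeguarded by $\lambda_{i_0} > 0$ and $L_{i_0} < 1$ entering multiplicatively, so no quantitative information is lost.
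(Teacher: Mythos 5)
Your proof is correct, and it is worth comparing with the paper's. The skeleton is the same: both arguments pass through Theorem~\ref{nonexpansive average}, writing $A_i=2J_{B_i}-\Id$ and $\averageonelambda=2J_B-\Id$ with $B=\sum_{i\in I}\lambda_i B_i$, and both transfer strong monotonicity from $B_{i_0}$ to $B$ via the elementary estimate $\scal{Bx-By}{x-y}\geq\lambda_{i_0}\scal{B_{i_0}x-B_{i_0}y}{x-y}$. The difference lies in how the link between ``$2J_{B'}-\Id$ is a Banach contraction'' and ``$B'$ is suitably strongly monotone'' is established: the paper invokes \cite[Corollary 4.7]{bmw12} as a black box, in both directions (once to extract strong monotonicity of $B_{i_0}$ from the contractivity of $A_{i_0}$, and once to conclude contractivity of $\averageonelambda$ from strong monotonicity of $B$ and $B^{-1}$), whereas you prove exactly the two implications you need from the identity $\|x-y\|^2-\|A'x-A'y\|^2=4\scal{u-v}{B'u-B'v}$ with $u=J_{B'}x$, $v=J_{B'}y$. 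Your use of nonexpansiveness of $B$ to get $\|u-v\|\geq\tfrac{1}{2}\|x-y\|$ plays precisely the role that strong monotonicity of $B^{-1}$ (cocoercivity of $B$) plays in the paper's chain of inequalities, so nothing essential is bypassed. What your route buys is twofold: the argument is self-contained, and it is quantitative --- you obtain the explicit contraction constant $\bigl(1-\lambda_{i_0}(1-L_{i_0}^2)/4\bigr)^{1/2}$ for $\averageonelambda$, whereas the paper itself notes in Subsection~\ref{Lipschitz} that its proof yields ``no quantitative information regarding the Lipschitz constant.'' So your proposal is a modest but genuine strengthening of the paper's own treatment.
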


\begin{proof}
\cite[Corollary 4.7 ]{bmw12} asserts that given a maximally monotone operator $B:\HH\rightrightarrows\HH$, letting $A=2J_B-\Id$, then $A$ is a Banach contraction if and only if $B$ and $B^{-1}$ are strongly monotone. We now employ the settings and the outcome of Theorem~\ref{nonexpansive average}: for every $i\in I$, $A_i:\HH\to\HH$ is monotone and nonexpansive, $A_i=2J_{B_i}-\Id$ for a maximally monotone and nonexpansive mapping $B_i$ and $\averageonelambda=2J_B-\Id$ where $B:\HH\to\HH$ is the monotone and nonexpansive mapping $B=\sum_{i\in I}\lambda_iB_i$. We also see that $B_{i_0}$ is strongly monotone, say with $\epsilon_{i_0}$ being its constant of strong monotonicity. Now, let $x$ and $y$ be points in $\HH$. Then
\begin{align*}
\scal{Bx-By}{x-y}&=\sum_{i\in I}\lambda_i\scal{B_i x-B_i y}{x-y}\geq\lambda_{i_0}\scal{B_{i_0}x-B_{i_0}y}{x-y}\\
&\geq\lambda_{i_0}\epsilon_{i_0}\|x-y\|^2\geq\lambda_{i_0}\epsilon_{i_0}\|Bx-By\|^2.
\end{align*}
Thus, we see that both, $B$ and $B^{-1}$ are $\lambda_{i_0}\epsilon_{i_0}$-strongly monotone, which, in turn, implies that $\averageonelambda=2J_B-\Id$ is a Banach contraction. 
\end{proof}

\subsection{Projections and normal cones}

We will say that a property $(p)$ is \textbf{\emph{indeterminate}} (with respect to the resolvent average) if $(p)$ is neither dominant nor recessive.

\begin{example}\label{201406121}
\textbf{(being a projection is indeterminate)}
Let $A_1$ and $A_2$ be the projections in $\RR^2$ onto $\RR \times \{0\}$ and $\{0\} \times \RR$, respectively. That is,
$$A_1=\begin{pmatrix}
1 & 0\\
0 & 0
\end{pmatrix}\text{ and }A_2=\begin{pmatrix}
0 & 0\\
0 & 1
\end{pmatrix}.
$$
Let $0<\lambda<1$, $\lambda_1=\lambda$ and $\lambda_2=1-\lambda$. Then 
$$
\averageonelambda = \begin{pmatrix}
\frac{\lambda}{2-\lambda} & 0\\[6 pt]
0 & \frac{\lambda-1}{\lambda+1}
\end{pmatrix},
$$
which is not a projection since $\averageonelambda^2\neq\averageonelambda$.
\end{example}

\begin{example}\label{201406122}
\textbf{(being a normal cone operator is indeterminate)}
Let $f_1:\RR^2\to\RX$ be the function $f_1=\iota_{\RR\times\{0\}}$ and let $f_2:\RR^2\to\RX$ be the function $f_1=\iota_{\{0\}\times\RR}$. Let $A_1:\RR^2\rightrightarrows\RR^2$ be the normal cone operator $A_1=N_{\RR\times\{0\}}=\partial f_1$ and let $A_2:\RR^2\rightrightarrows\RR^2$ be the normal cone operator $A_2=N_{\{0\}\times\RR}=\partial f_2$. Let $0<\lambda<1$, $\lambda_1=\lambda$ and $\lambda_2=1-\lambda$, then
$$J_{A_1}=\begin{pmatrix}
1 & 0\\
0 & 0
\end{pmatrix}\text{, }J_{A_2}=\begin{pmatrix}
0 & 0\\
0 & 1
\end{pmatrix}
\ \text{\ and\  }\ 
\averageonelambda = \begin{pmatrix}\frac{1-\lambda}{\lambda} & 0\\ 0 & \frac{\lambda}{1-\lambda}\end{pmatrix}.
$$
Thus, we see that $\averageonelambda=\partial p({\bf f},{\boldsymbol \lambda}) =\nabla p({\bf f},{\boldsymbol \lambda})$ where $p({\bf f},{\boldsymbol \lambda}):\RR^2\to\RR$ is the parabola $p({\bf f},{\boldsymbol \lambda})(x,y)=\frac{1-\lambda}{2\lambda}x^2+\frac{\lambda}{2(1-\lambda)}y^2$. Since the antiderivative of $\averageonelambda$ is unique up to an additive constant and since $p({\bf f},{\boldsymbol \lambda})\neq\iota_C$ for any subset $C$ of $\RR^2$, we see that $\averageonelambda$ is not of the form $\partial\iota_C$, that is, $\averageonelambda$ is not a normal cone operator.
\end{example}

\subsection{Lipschitz monotone operators}\label{Lipschitz}

The purpose of this last subsection is to point out that further study is still required in order to determine if Lipschitzness is a property which is dominant, recessive or indeterminate. Within certain classes of monotone operators we do, however, have several conclusions: (i) Within the class of monotone linear relations, Theorem~\ref{classical linearity dominance within}(i) guarantees that Lipschitzness is dominant w.r.t. the resolvent average. However, no quantitative result is currently available with regard to the Lipschitz constant. (ii) Theorem~\ref{nonexpansive average} guarantees that 1-Lipschitzness is recessive in the case where $\mu=1$. However, no such result is available for other Lipschitz constants. Furthermore, within the class of nonexpansive monotone mappings, we saw that being a Banach contraction is dominant, as asserted by Theorem~\ref{Banach contraction dominance within}, still, with no quantitative information regarding the Lipschitz constant. (iii) Within the class of subdifferential operators we do have dominance of Lipschitzness w.r.t. the resolvent average with an explicit Lipschitz constant, namely, Theorem~\ref{Lip grad}. However, as implied by Example~\ref{strong sharpness counter for subdifferentials}, our explicit Lipschitz constant is not.

Let $\alpha>0$. In the following example, within the class of monotone linear operators and outside the class of subdifferential operators, we take the resolvent average of two mappings with a common sharp Lipschitz constant $\alpha$ such that their resolvent average has a sharp Lipschitz constant $\alpha^2$. This is in stark contrast to the constant in Theorem~\ref{Lip grad} for subdifferential operators.   
\begin{example}\label{scaled rotations}
Let 
$A_1=\begin{pmatrix}
0 & -\alpha\\
\alpha & 0
\end{pmatrix}=A_2^{\intercal}
$.
Then
$$ 
J_{A_1}=\frac{1}{\alpha^2+1}\begin{pmatrix}
1 & \alpha\\-\alpha & 1
\end{pmatrix}=J_{A_2}^{\intercal}\ \ \text{ and }\ \ \ 
\mathcal{R}(\bf{A})=\begin{pmatrix}
\alpha^2 & 0\\
0 & \alpha^2
\end{pmatrix}.
$$
We see that, indeed, the sharp Lipschitz constant of $A_1$ and $A_2$ is $\alpha$ and the sharp Lipschitz constant of $\mathcal{R}(\bf{A})$ is $\alpha^2$.
\end{example}

\section*{Acknowledgments}
Sedi Bartz was supported
by a postdoctoral fellowship of the Pacific Institute for the Mathematical Sciences and by NSERC
grants of Heinz Bauschke and Xianfu Wang.
Heinz Bauschke was partially supported by the Canada Research
Chair program and by the Natural Sciences and Engineering Research
Council of Canada.
Sarah Moffat was partially supported by the Natural Sciences and
Engineering Research Council of Canada.
Xianfu Wang was partially supported by the Natural Sciences and
Engineering Research Council of Canada.


\begin{thebibliography}{}

\bibitem{AusTeb}
A.\ Auslender and M.\ Teboulle, \emph{Asymptotic Cones and
Functions in Optimization and Variational Inequalities},
Springer-Verlag, 2003. 

\bibitem{BBBRW}
S.\ Bartz, H.H.\ Bauschke, J.M.\ Borwein, S.\ Reich and X.\
Wang, ``Fitzpatrick functions, cyclic monotonicity and Rockafellar's antiderivative",  \emph{Nonlinear Analysis: Theory, Methods and Applications}~66,
pp.~1198--1223, 2007.

\bibitem{BB1997} H.H. Bauschke and J.M. Borwein, ``Legendre functions and the method of random Bregman projections", \emph{Journal of Convex Analysis}~4(1), pp.~27--67, 1997.

\bibitem{bbw2007}
H.H. Bauschke, J.M. Borwein and X. Wang,
``Fitzpatrick functions and continuous linear monotone operators", \emph{SIAM Journal on Optimization}~18,
pp.~789--809, 2007.

\bibitem{BC2011}
H.H.\ Bauschke and P.L.\ Combettes,
\emph{Convex Analysis and Monotone Operator Theory in Hilbert Spaces},
Springer, 2011.

\bibitem{BGLW}
H.H.\ Bauschke, R.\ Goebel, Y.\ Lucet and X.\ Wang, ``The proximal average: basic theory", \emph{SIAM Journal on Optimization}~19, pp.~766--785, 2008.

\bibitem{BM}
H.H\ Bauschke, W.L. Hare and W.M. Moursi, ``On the range of the Douglas-Rachford operator'', \emph{preprint}, 2014.\ \ \ \texttt{http://arxiv.org/abs/1405.4006} 

\bibitem{BLT}
H.H.\ Bauschke, Y.\ Lucet and M.\ Trienis, ``How to transform one convex function continuously into another", \emph{SIAM Review} 50, pp. 115--132, 2008. 

\bibitem{BMR}
H.H.\ Bauschke, E.\ Matou\v{s}kov\'{a} and S.\ Reich, ``Projection and proximal point methods: convergence results and counterexamples", \emph{Nonlinear Analysis: Theory, Methods and Applications}~56(5), pp. 715--738, 2004.


\bibitem{bmow2013} H.H.\ Bauschke, S.M.\ Moffat and X.\ Wang, ``Near equality, near convexity, sums of maximally monotone operators, and averages of firmly nonexpansive mappings", \emph{Mathematical Programming}~139, pp.~55--70, 2013.

\bibitem{bmw12} H.H.\ Bauschke, S.M.\ Moffat and X.\ Wang, ``Firmly nonexpansive mappings and maximally monotone operators: correspondence and duality", \emph{Set-valued and Variational Analysis}~20,  pp.~131--153, 2012.

\bibitem{bmw-res}
H.H.\ Bauschke, S.M.\ Moffat and X.\ Wang, ``The resolvent average for positive semidefinite matrices", \emph{Linear Algebra and Its Applications}~432, pp.~1757--1771, 2010.

\bibitem{BW}
H.H.\ Bauschke and X.\ Wang, ``The kernel average for two convex functions and its applications to the extension and representation of monotone operators", \emph{Transactions of the American Mathematical Society}~361, pp.~5947--5965, 2009.

\bibitem{bwy2012}
H.H.\ Bauschke, X.\ Wang and L.\ Yao, ``Rectangularity and
paramonotonicity of maximally monotone operators",
\emph{Optimization}~63, pp.~487--504, 2014.

\bibitem{Borwein}
J.M.\ Borwein,
``Fifty years of maximal monotonicity'',
\emph{Optimization Letters}~4, pp.~473--490, 2010. 

\bibitem{BV}
J.M.\ Borwein and J.D.\ Vanderwerff, \emph{Convex Functions:
Constructions, Characterizations and Counterexamples}, Cambridge University Press, 2010.

\bibitem{BH}
H.\ Br\'{e}zis and A.\ Haraux, ``Image d'une somme d'op\'erateurs monotones
et applications", \emph{Israel Journal of Mathematics}~23, pp.~165--186, 1976.

\bibitem{BurIus}
R.S.\ Burachik and A.N.\ Iusem,
\emph{Set-Valued Mappings and Enlargements of Monotone Operators},
Springer-Verlag, 2008.


\bibitem{Cross}
R. Cross, \emph{Multivalued Linear Operators}, Marcel Dekker, 1998.

\bibitem{EckBer}
J.\ Eckstein and D.P.\ Bertsekas,
``On the Douglas-Rachford splitting method
and the proximal point algorithm for maximal monotone
operators",
\emph{Mathematical Programming Series A} 55, pp.~293--318, 1992.

\bibitem{Fitz}
S.\ Fitzpatrick, ``Representing monotone operators by convex functions", \emph{Workshop/Miniconference on Functional Analysis and Optimization (Canberra 1988)}, Proceedings of the Centre for Mathematical Analysis, Australian National University vol. 20, Canberra, Australia, pp.~59--65, 1988.

\bibitem{GHW}
R.\ Goebel, W.\ Hare and X.\ Wang,
``The optimal value and optimal solutions of the proximal average
of convex functions'',
\emph{Nonlinear Analysis}~75, pp.~1290--1304, 2012. 

\bibitem{Gho}
N.\ Ghoussoub, \emph{Self-dual Partial Differential Systems and Their Variational Principles}, Springer, 2009.

\bibitem{MathExp}
R. Laubenbacher and  D. Pengelley, \emph{Mathematical Expeditions: Chronicles by the Explorers}, Springer--Verlag, New York, 1999.

\bibitem{Minty}
G.J.\ Minty,
``Monotone (nonlinear) operators in Hilbert spaces",
\emph{Duke Mathematical Journal}~29, pp.~341--346, 1962.

\bibitem{Moffat}
S.M. Moffat, \emph{The Resolvent Average : An Expansive Analysis of Firmly Nonexpansive Mappings and Maximally Monotone Operators}, PhD Dissertation, The University of British Columbia, Okanagan, 2015.\ \ \ \texttt{http://circle.ubc.ca/handle/2429/51593}

\bibitem{Mor}
J.-J. Moreau, ``Proximit\'{e} et dualit\'{e} dans un espace hilbertien", \emph{Bulletin de la Soci\'{e}t\'{e} Math\'{e}matique de France}, 93, pp. 273--299, 1965.

\bibitem{Pen}
T.\ Pennanen, ``On the range of monotone composite mappings", \emph{Journal of Nonlinear and Convex Analysis}~2(2), pp.~193--202, 2001.

\bibitem{Reich 1983}
S.\ Reich, ``A limit theorem for projections", \emph{Linear and Multilinear Algebra}, 13, pp.~281--290, 1983.

\bibitem{Rock cyc}
R.T.\ Rockafellar, ``Characterization of the subdifferentials of convex functions", \emph{Pacific Journal of Mathematics} 17, pp.~497--510, 1966.

\bibitem{Rock bndd}
R.T.\ Rockafellar, ``Local boundedness of nonlinear, monotone operators", \emph{Michigan Mathematical Journal} 16, pp.~397--407, 1969. 

\bibitem{Rock} R.T.\ Rockafellar, \emph{Convex Analysis}, Princeton University Press, Princeton, NJ, 1970.

\bibitem{RockWets}
R.T.\ Rockafellar and R.J-B\ Wets,
\emph{Variational Analysis},
Springer-Verlag, 1998.

\bibitem{Simons2}
S.\ Simons, \emph{From Hahn-Banach to Monotonicity},
 Lecture Notes in Mathematics, Vol. 1693,
Springer-Verlag, 2008.

\bibitem{Yu}
Y.\ Yu, ``Better approximation and faster algorithm using the proximal average",
\emph{Advances in Neural Information Processing Systems 26 , (NIPS 2013)}, C.J.C.\ Burges, L.\ Bottou, M.\ Welling, Z.\ Ghahramani and K.Q. Weinberger (editors), Curran Associates, Inc. pp.~458--466, 2013.

\bibitem{Wang}
X.\ Wang, ``Self-dual regularization of monotone operators via the resolvent average", \emph{SIAM Journal on Optimization}~21, pp.~438--462, 2011.

\bibitem{Zal} C. Z\u{a}linescu, \emph{Convex Analysis in General Vector spaces}, World Scientific Publishing Co, 2002.

\end{thebibliography}
\end{document}